\newcommand{\reff}[1]{(\ref{#1})}
\theoremstyle{plain}
\newtheorem{theo}{Theorem}[section]
\newtheorem{theo*}{Theorem}
\newtheorem{cor}[theo]{Corollary}
\newtheorem{prop}[theo]{Proposition}
\newtheorem{lem}[theo]{Lemma}
\newtheorem{defi}[theo]{Definition}
\theoremstyle{remark}
\newtheorem{rem}[theo]{Remark}
\newcommand{\ca}{{\mathcal A}}
\newcommand{\cf}{{\mathcal F}}
\newcommand{\cn}{{\mathcal N}}
\newcommand{\ct}{{\mathcal T}}
\newcommand{\cz}{{\mathcal Z}}
\newcommand{\E}{{\mathbb E}}
\newcommand{\F}{{\mathbb F}}
\newcommand{\N}{{\mathbb N}}
\renewcommand{\P}{{\mathbb P}}
\newcommand{\R}{{\mathbb R}}
\newcommand{\T}{{\mathbb T}}
\newcommand{\ra}{{\rm a}}
\newcommand{\rbb}{{\rm b}}
\newcommand{\rd}{{\rm d}}
\newcommand{\rN}{{\rm N}}
\newcommand{\rP}{{\rm P}}
\newcommand{\rE}{{\rm E}}
\newcommand{\bP}{{\bar \P}}
\newcommand{\bE}{{\bar \E}}
\newcommand{\bt}{{\mathbf t}}
\newcommand{\bff}{{\mathbf f}}
\newcommand{\bu}{{\bar u}}
\newcommand{\ind}{{\bf 1}}
\newcommand{\Card}{{\rm Card}\;}
\newcommand{\inv}[1]{\mathop{\frac{1}{ #1}}\nolimits}
\newcommand{\expp}[1]{\mathop {\mathrm{e}^{ #1}}}
\renewcommand{\root}{\partial}
\newcommand{\lb}{[\![}
\newcommand{\rb}{]\!]}
\def\beqlb{\begin{eqnarray}}\def\eeqlb{\end{eqnarray}}
 \def\beqnn{\begin{eqnarray*}}\def\eeqnn{\end{eqnarray*}}
\def\rar{\rightarrow}
\def\ez{\varepsilon}
\def\lz{\lambda}
\title[Stationary CBI]{Some properties of stationary continuous state branching processes}
\date{\today}
\author{Romain Abraham}
\address{Romain Abraham,
Institut Denis Poisson,
Universit\'{e} d'Orl\'{e}ans,
Universit\'e de Tours,
CNRS,
France}
\email{romain.abraham@univ-orleans.fr}
\author{Jean-Fran\c{c}ois Delmas}
\address{Jean-Fran\c{c}ois Delmas,
 CERMICS, Ecole des Ponts, France}
\email{delmas@cermics.enpc.fr}
\author{Hui He}
\address{Hui He,
 School of Mathematical Sciences, Beijing Normal University, P.R. China}
\email{hehui@bnu.edu.cn}
\begin{document}

\subjclass[2010]{60J80, 60J27, 92D25}

\keywords{Continuous state branching process with immigration, quasi-stationary distribution, genealogical tree, ancestral process}

\thanks{ We thank Beijing  Normal University, Orl\'eans University and
  Ecole  des Ponts  for  the  invitations during  which  this paper  was
  written.   Hui He  also  thanks  S. Feng  for  enlightening
  discussions. This research was partially supported by NSFC
  (No. 11671041 and 11531001)}

\begin{abstract}
  We consider  the genealogical  tree of  a stationary  continuous state
  branching  process  with  immigration.    For  a  sub-critical  stable
  branching mechanism, we  consider the genealogical tree  of the extant
  population at  some fixed time and  prove that, up to  a deterministic
  time-change,  it is  distributed  as  a continuous-time  Galton-Watson
  process with  immigration. We  obtain similar  results for  a critical
  stable branching mechanism when only looking at immigrants arriving in
  some  fixed  time-interval.   For  a  general  sub-critical  branching
  mechanism, we consider the number of individuals that give descendants
  in  the  extant  population.   The associated  processes  (forward  or
  backward in time)  are pure-death or pure-birth  Markov processes, for
  which we compute the transition rates.
\end{abstract}

\maketitle

\section{Introduction}

\subsection{State of the art}

Inference of the genealogical tree of some given population (or of a sample of extant individuals) is a central question in evolutionary biology (see for instance \cite{HS11}) and, to perform this task by the usual maximum likelihood method, the distribution of this genealogical tree must be known.

The most popular model in this  context is the Wright-Fisher model where
the genealogical tree of a sample  of extant individuals is given by the
Kingman coalescent  \cite{K75}. One  major feature of  this model  is to
consider a constant  size population although many  extensions have been
proposed   to   take   into   account  population   size   change   (see
e.g.  \cite{GT94}).  Other models  have  also  been considered  where  the
distribution  of  the genealogical  tree  or  a  sample of  the  current
population  can be  explicitely  described:  linear birth-death  process
\cite{NMH94}, continuous time Galton-Watson trees \cite{HJR19,J19}, Brownian tree \cite{AP05}  see also \cite{AD18}, splitting
trees \cite{La10}. Some recent results on the coalescent process
associated with some branching process by time-reversal can be found in
\cite{WLY19, JL19, FMM19}.
\medskip

We consider here continuous state branching processes with immigration so that the total population size is stationary. More precisely, let $\psi$ be a sub-critical branching mechanism of the form
\begin{equation}
\label{eq:def-psi}
\psi(\lambda)=\alpha\lambda +\beta\lambda^2+\int_{(0,+\infty)}\left(\expp{-\lambda r}-1+\lambda r\right)\pi(dr),
\end{equation}
where   $\alpha=\psi'(0)>0$  (which   implies  that   $\psi$  is   sub-critical),
$\beta\ge 0$  and $\pi$  is a  $\sigma$-finite measure  on $(0,+\infty)$
such   that   $\int_{(0,+\infty)}  (r\wedge   r^2)\pi(dr)<+\infty$ and
which furthermore satisfies:
\begin{equation}\label{eq:grey-kappa}
\int^{+\infty}\frac{d\lambda}{\psi(\lambda)}<+\infty \quad\text{(Grey
  condition) and}\quad \int_{0+}\left(\frac{1}{\lambda \alpha} -
\frac{1}{\psi(\lambda)}\right)\,  d\lambda<\infty. 
\end{equation}
The  Grey condition implies in particular that $\beta>0$ or
$\int_{(0,1)}r\pi(dr)=+\infty$.

A  continuous  state branching  process  (CB  process  for short)  is  a
positive real valued Markov process $(Y_t,t\ge 0)$ that satisfies the following
branching  property:  the  process   $Y$  starting  from  $Y_0=x+x'$  is
distributed  as  $Y^{(1)}+Y^{(2)}$  where $Y^{(1)}$  and  $Y^{(2)}$  are
independent copies  of $Y$ starting respectively  from $Y_0^{(1)}=x$ and
$Y_0^{(2)}=x'$. The  distribution of  the process  $Y$ is  then uniquely
determined by  its branching  mechanism, see  Section \ref{sec:def-psi}.
As we only consider sub-critical branching mechanisms together with Grey
condition \eqref{eq:grey-kappa},  the  population  becomes  a.s.
extinct in finite time. We denote by $c(t)$ the probability of
non-extinction at time $t>0$ under the canonical measure which is
defined by:
\[
\int_{c(t)} ^{+\infty } \frac{d\lambda}{\psi(\lambda)}=t.
\]
The second condition in \reff{eq:grey-kappa} insures that the following
limit is well defined:
\begin{equation}
   \label{eq:def-kappa}
 \kappa=\lim_{t\to+\infty}c(t)\expp{\alpha t}\in
(0, +\infty )
\end{equation}
where according to Lemma 1 in \cite{L03}, $\kappa$ satisfies 
$c^{-1}(\kappa)=\int_0^{\kappa}\left(\frac{1}{\alpha\lambda}-\frac{1}{\psi(\lambda)}\right)d\lambda
$.
\medskip

  One way to avoid this extinction  is to add an
immigration  characterized  by a  function  $\phi$ defined on $\R^+
$  which describes  the
intensity of the  immigration and the size of  the immigrant population,
see for example \cite{L20} and references therein. A natural immigration
function, which  appears for instance  when conditioning the  initial CB
process on non-extinction, see \cite{L07,CD12}, is given by: for $\lambda\geq 0$,
\begin{equation}
   \label{eq:def-phi}
\phi(\lambda)=\psi'(\lambda) -
\alpha=2\beta\lambda+\int_{(0,+\infty)}\left(1-\expp{-\lambda  
    r}\right)r\pi(dr).
\end{equation}
We can  then consider  a CB  process with  immigration (CBI process for
short) indexed  by $\R$,
$Y=(Y_t,\ t\in \R)$, whose one-dimensional distributions are constant in
time.   Some  properties  of  this process  have  been  investigated  in
\cite{CD12}. By convention, the stationary case will correspond to a
sub-critical branching mechanism $\psi$ and the corresponding
immigration $\phi$ given by \reff{eq:def-phi}. 
We shall denote by $\bu$ the Laplace transform of $Y_t$, see
\reff{Zlaplace01},  which is given
by:
\begin{equation}
   \label{eq:def-bu0}
\bu (\lambda)=\frac{\kappa \alpha \expp{-\alpha
    c^{-1}(\lambda)}}{\psi(\lambda)}\cdot
\end{equation}

The description  of the genealogy of  CB processes is done  using L\'evy
trees (see \cite{DLG02}), and of CBI processes  as a real
tree with an  infinite spine on which some L\'evy  trees are grafted. As
the population size in our  CBI processes  is stationary,
we can look at the extant population at any fixed time, say $t=0$ in all
the paper. We want to describe the distribution of the genealogical tree
of this extant  population. A complete description of  this genealogy is
already  done  in  \cite{AD18}   for  a  quadratic  branching  mechanism
$\psi(\lambda)=\alpha\lambda   +\beta\lambda^2$    together   with   the
description  of  the  genealogical  tree  of  a  sample  of  the  extant
population. We focus in this paper on general branching mechanisms.

\subsection{Main results}
For a  general sub-critical branching mechanism  $\psi$, the description
of the genealogy  of the extant population, in the  stationary case, can
be  seen as  a  birth process  (forward  in time)  and  a death  process
(backward in time) coming from infinity.  Let $1+M_t^0$ be the number of
descendants  of   the  extant  population   forward  in  time   at  time
$t\in        (-\infty,        0)$.        Notice        that        a.s.
$\lim_{t\rightarrow   -\infty  }   M_t^0=0$.    The  ancestral   process
$( M^0_t,t<  0)$ describes  in some  sense the  genealogy of  the extant
population at  time 0.  Asymptotics of  $M_t^0$ as $t$ increases  to $0$
are  given  in \cite{CD12}  (see  also  references therein  for  related
results on  coalescent processes).   We have  the following  result, see
Propositions \ref{prop:q-d} and \ref{prop:q-b}.

\begin{theo}
   \label{theo:main-b-d}
   Assume  $\psi$  given  by  \reff{eq:def-psi}  is   sub-critical
   (\textit{i.e.} $\alpha>0$)  and
   satisfies conditions \reff{eq:grey-kappa} and $\phi$ is given by \reff{eq:def-phi}.  

\begin{itemize}
   \item[(i)]
The forward in time
   process $(M^0_{t},\,  t<0)$ is a c\`ad-l\`ag  inhomogeneous pure birth Markov
   process  starting from $0$ at time
   $-\infty $ with birth rate given by
    for $m>n\geq 0$ and
   $t>0$ :
\[
q^\rbb_{n,m}(-t)
= \frac{(m+1)}{(m+1-n)!} \,
c(t)^{m-n}\, \left|\psi^{(m-n+1)}\bigl(c(t)\bigr) \right| .
\]

\item[(ii)]  The backward in time process $(M^0_{(-t)-},\, t>0)$ is a
   c\`ad-l\`ag inhomogeneous pure death Markov process    starting from
   $+\infty $ at time $0$, with death rate given by
    for $n>m\geq 0$ and
   $t>0$: 
\[
q^\rd_{n,m}(t)
=\binom{n+1}{m}\, \frac{\left|\bar{u}^{(m)}\bigl(c(t)\bigr)\right|}
{ \left|\bar{u}^{(n)}\bigl(c(t)\bigr)\right|}\, 
\left| \psi^{(n-m+1)}\bigl(c(t)\bigr) \right|.
\]
\end{itemize}
\end{theo}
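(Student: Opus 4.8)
The plan is to reduce the theorem to three ingredients: the one-dimensional laws of $M^0_t$, the Markov property together with monotonicity, and a time-reversal identity that will let me deduce one rate family from the other.

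\smallskip

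\emph{Step 1 (marginals).} I would first compute $p_k(t):=\P(M^0_{-t}=k)$ for $t>0$. Using the representation of the stationary CBI as a L\'evy forest grafted along an infinite spine, I condition on the present mass $Y_0$: each infinitesimal part of the extant population is attached, backward in time, to an ancestor alive at time $-t$, and by the Poissonian structure of the canonical measure the number of such ancestors beyond the spine is Poisson with parameter $c(t)\,Y_0$, the factor $c(t)$ being precisely the canonical non-extinction probability over a duration $t$. Averaging over the stationary law of $Y_0$, whose Laplace transform is $\bu$ of \eqref{eq:def-bu0}, and using $|\bu^{(k)}(\lambda)|=\E[Y_0^k\expp{-\lambda Y_0}]$, this gives
\[
p_k(t)=\E\!\left[\expp{-c(t)Y_0}\tfrac{(c(t)Y_0)^k}{k!}\right]=\frac{c(t)^k}{k!}\,\bigl|\bu^{(k)}(c(t))\bigr|,
\]
which is a genuine probability law since $\sum_k (c(t)Y_0)^k/k!=\expp{c(t)Y_0}$. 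In passing this already yields the two boundary regimes: $c(t)\to0$ as $t\to+\infty$ forces $M^0_t\to0$, while $c(t)\to+\infty$ as $t\to0$ produces the coming down from infinity.

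\smallskip

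\emph{Step 2 (Markov property, monotonicity, and the death rates directly).} Monotonicity is structural: fixing the extant population, its set of ancestors can only grow as $t\uparrow0$, so $t\mapsto M^0_t$ is nondecreasing and the two processes are respectively pure birth and pure death. For part (ii) I would work on an infinitesimal interval $(-t-\mathrm dt,-t)$: a decrease of the ancestor count from $n+1$ to $m+1$ is caused by a single branch point of the forest at which $n-m+1$ of the present lineages coalesce. Integrating the survival-thinning probability that a clump of size $r$ carries exactly $n-m+1$ surviving lineages against $\pi(\mathrm dr)$ identifies the per-merger intensity with $\bigl|\psi^{(n-m+1)}(c(t))\bigr|$ through the L\'evy--Khintchine form \eqref{eq:def-psi} (the quadratic part $\beta$ contributing to binary mergers); the number of ways of selecting the coalescing lineages is $\binom{n+1}{n-m+1}=\binom{n+1}{m}$; and the conditional weight of the resulting configuration, read off from the marginals $p_\cdot(t)$ of Step~1, contributes the ratio $|\bu^{(m)}(c(t))|/|\bu^{(n)}(c(t))|$. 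Collecting the factors gives $q^{\rd}_{n,m}(t)$, and the same infinitesimal analysis shows that the transition kernel depends on the past only through the current count, which is the Markov property.

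\smallskip

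\emph{Step 3 (reversal $\Rightarrow$ birth rates).} Since the process of part (ii) is by construction the time reversal of the process of part (i), their instantaneous rates at physical time $-t$ satisfy $p_n(t)\,q^{\rbb}_{n,m}(-t)=p_m(t)\,q^{\rd}_{m,n}(t)$ for $m>n$. Substituting the marginals of Step~1 and the death rates of Step~2, the factor is $\tfrac{p_m}{p_n}=c(t)^{m-n}\tfrac{n!}{m!}\tfrac{|\bu^{(m)}(c(t))|}{|\bu^{(n)}(c(t))|}$, the $\bu$-terms cancel against $q^{\rd}_{m,n}$, and the combinatorial identity $\tfrac{n!}{m!}\binom{m+1}{n}=\tfrac{m+1}{(m+1-n)!}$ yields exactly $q^{\rbb}_{n,m}(-t)=\tfrac{m+1}{(m+1-n)!}\,c(t)^{m-n}\,|\psi^{(m-n+1)}(c(t))|$. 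I expect the real difficulty to lie entirely in Step~2 and to be twofold: first, turning the heuristic genealogical picture into rigorous statements — that the integer-valued process extracted from the continuous L\'evy-forest genealogy is genuinely Markov, c\`adl\`ag, comes down from infinity at $0$ and converges to the lone spine at $-\infty$ — which requires care with measurability and with the conditional law of the ancestral sub-families; and second, the exact bookkeeping matching the survival-thinned $\pi$-integrals with $\psi^{(n-m+1)}(c(t))$ and pinning down the prefactor $\binom{n+1}{m}$ and the weight $|\bu^{(m)}|/|\bu^{(n)}|$. Once these are established, Steps~1 and~3 are short.
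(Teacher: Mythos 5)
Your overall architecture (one\mbox{-}time marginals, a direct infinitesimal computation of one family of rates, then time\mbox{-}reversal via Bayes to get the other family) is different from the paper's, which never does genealogical event\mbox{-}counting for the rates: it starts from the explicit two\mbox{-}time generating function $\bE[x^{M^0_{-r}}y^{M^0_{-s}}]$ of \reff{eq:lap-M00} (imported from \cite{BD16,CD12}) and extracts $\lim_\varepsilon \varepsilon^{-1}\partial^n_\mu\partial^m_\lambda f^{\rd}_{t,\varepsilon}(0,0)$ by Fa\`a di Bruno/Leibniz expansions, obtaining the death rates first and the birth rates by the swap $f^{\rbb}_{t,\varepsilon}(\lambda,\mu)=f^{\rd}_{t-\varepsilon,\varepsilon}(\mu,\lambda)$ --- which is exactly your Step~3 in generating\mbox{-}function clothing. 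Your Step~1 formula $p_k(t)=\frac{c(t)^k}{k!}|\bu^{(k)}(c(t))|$ is correct and agrees with the paper's $f_0$, although your justification conditions on the wrong variable: the Poisson($c(t)\cdot$mass) structure holds conditionally on $Z_{-t}$, not on $Y_0$; stationarity rescues the marginal but not the pathwise picture. Your Step~3 algebra checks out ($\tfrac{n!}{m!}\binom{m+1}{n}=\tfrac{m+1}{(m+1-n)!}$).

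The genuine gap is Step~2, which is the entire content of the theorem and is asserted rather than proved. Two concrete problems. First, the factors you list do not multiply to the stated formula: the survival\mbox{-}thinning computation for a clump of mass $r$ producing $j=n-m$ extra surviving lineages gives $\int r\,\expp{-c(t)r}\frac{(c(t)r)^{j}}{j!}\pi(dr)=\frac{c(t)^{j}}{j!}\,|\psi^{(j+1)}(c(t))|$, not $|\psi^{(n-m+1)}(c(t))|$ alone, so the powers of $c(t)$ and the factorial must be tracked and shown to cancel against the other ingredients --- and your decomposition also silently omits the spine/immigration events (mergers into the immortal line governed by $\phi^{(k)}=\psi^{(k+1)}$), which is precisely what turns the naive per\mbox{-}lineage factor into the $\binom{n+1}{m}$ (and, after reversal, the size\mbox{-}biased $(m+1)$) of the statement. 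Second, the ratio $|\bu^{(m)}(c(t))|/|\bu^{(n)}(c(t))|$ that you ``read off from the marginals'' as a ``conditional weight'' is nothing but the reversal factor $\tfrac{p_m}{p_n}$ up to the explicit $c(t)$ and factorial terms; inserting it into a purportedly direct backward derivation and then invoking reversal again in Step~3 makes the two steps circular. To make the plan rigorous you would have to either (a) do the genealogical computation in the forward direction, which requires the conditional law of the reduced tree above level $-t$ given $M^0_{-t}=n$ (Theorem~2.7.1 of \cite{DLG02}, i.e.\ \reff{eq:xi_1^B}--\reff{eq:def-g}) together with the Poissonian immigration rate \reff{eq:xi_1^I}, and then reverse; or (b) establish the two\mbox{-}time law \reff{eq:lap-M00} and differentiate, which is what the paper does. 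As it stands the proposal identifies the right objects but does not prove the rate formulas.
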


\medskip
We now consider a stable branching mechanism:
\begin{equation}
   \label{eq:def-psi-b}
\psi(\lambda)=\alpha\lambda +\gamma \lambda^b
\end{equation}
with $\alpha>0$, $\gamma>0$ and  $b\in(1,2]$. The case $b=2$ corresponds
to $\pi=0$ in \reff{eq:def-psi}, and the case $b\in (1, 2)$ corresponds
to $\beta=0$  and $\pi(dr)$ equal  (up to a multiplicative  constant) to
$r^{-b-1}  \, dr$.  See  Remark \ref{rem:birth-rate-b}  for an  explicit
computation  of   the  birth   rate  for   $1<b<2$.  See   also  Remark
\ref{rem:death-rate-2}   for  an  explicit
computation of  the birth and death  rates in the quadratic  case $b=2$,
which already appears in Proposition 3.2 and 3.3 in \cite{BD16}.
\medskip 

We now  present a deterministic time  change for which the  genealogy of
the  extant  population   (forward  in  time)  in the stationary case  is   a  time  homogeneous
Galton-Watson process with  immigration.  The time change  relies on the
extinction probability  $c(t)$ of  the associated  CB process  under the
canonical measure  which is given (see Example 3.1 p.~62 in
\cite{l:mvbmp} where  $\bar v_t$ 
corresponds to $c(t)$ in our setting) for $t>0$ by:
\begin{equation}
   \label{eq:def-c}
c(t)=\left(\frac{\alpha}{\gamma\left(\expp{(b-1)\alpha
        t}-1\right)}\right)^{\frac{1}{b-1}}. 
\end{equation}
We consider the  time change   $T(t)=-R^{-1}(t)$ where:
\begin{equation}
   \label{eq:def-R}
R(t)=\log \left(\frac{\tilde \psi(c(t))}{\tilde \psi(0)}\right)
\quad\text{with}\quad 
\tilde \psi(\lambda)=\frac{\psi(\lambda)}{\lambda}=\alpha + \gamma
\lambda^{b-1}
\end{equation}
and we consider the process $\tilde M=(\tilde M_t= M^0_{T(t)},t>0)$. 
The main result of the paper is the following
theorem.

\begin{theo}\label{thm:main}
 Assume  $\psi$   is given  by  \reff{eq:def-psi-b} (with $\alpha>0$ and
 $b\in (1,2]$) and $\phi$  by \reff{eq:def-phi}. The time-changed
 ancestral process $\tilde M$ is distributed as a 
continuous-time Galton-Watson process with immigration.
\end{theo}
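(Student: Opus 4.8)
The plan is to work directly with the forward birth rates from Theorem~\ref{theo:main-b-d}(i) and to show that, under the deterministic time change $T$, they become time-homogeneous and of the shape carried by a continuous-time Galton-Watson process with immigration. Recall that such a process, viewed as a $\N$-valued Markov chain, is characterized by a $q$-matrix whose off-diagonal rate from $n$ to $m>n$ splits as $n\,\rho_{m-n+1}+\mu_{m-n}$: the term $n\,\rho_{m-n+1}$ is the branching contribution (one of the $n$ particles is replaced by $m-n+1\ge 2$ offspring) and $\mu_{m-n}$ is the immigration contribution (an event brings in $m-n\ge 1$ new particles), both rates being \emph{independent of time}. So it suffices to (a) put $q^\rbb_{n,m}(-t)$ into this split form, and (b) check that, after multiplication by the Jacobian of the time change, each piece becomes a constant. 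Writing $j=m-n+1\ge 2$, so that $m+1=n+j$ and $m-n=j-1$, the formula of Theorem~\ref{theo:main-b-d}(i) reads
\[
q^\rbb_{n,m}(-t)=\frac{n+j}{j!}\,c(t)^{\,j-1}\,\bigl|\psi^{(j)}(c(t))\bigr|
= n\,b_j(t)+a_{j-1}(t),
\]
with $b_j(t)=\frac{1}{j!}\,c(t)^{\,j-1}\bigl|\psi^{(j)}(c(t))\bigr|$ and $a_{j-1}(t)=j\,b_j(t)$. This is already the branching-plus-immigration split; it remains only to control the time dependence.

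The crucial point, and the place where the stable form \reff{eq:def-psi-b} enters, is that for $j\ge 2$ one has $\psi^{(j)}(\lambda)=\gamma\,b(b-1)\cdots(b-j+1)\,\lambda^{\,b-j}$, the linear part $\alpha\lambda$ having disappeared. Hence
\[
c(t)^{\,j-1}\bigl|\psi^{(j)}(c(t))\bigr|
=\gamma\,\bigl|b(b-1)\cdots(b-j+1)\bigr|\,c(t)^{\,b-1},
\]
so that the exponent of $c(t)$ collapses to $b-1$ \emph{independently of $j$}: every $b_j(t)$ equals $C_j\,\gamma\,c(t)^{\,b-1}$ with $C_j=\bigl|\binom{b}{j}\bigr|$. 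Thus the entire time dependence of all rates is carried by the single function $\gamma\,c(t)^{\,b-1}$.

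Next I would compute the Jacobian. Using $c'(t)=-\psi(c(t))$ together with \reff{eq:def-c} one finds $\gamma\,c(t)^{\,b-1}=\alpha/(\expp{(b-1)\alpha t}-1)$, whence from \reff{eq:def-R} a short computation gives $R'(t)=-(b-1)\,\gamma\,c(t)^{\,b-1}$. Since $T=-R^{-1}$ is increasing, a deterministic time change turns the rate of $(M^0_\tau)$ at $\tau=T(s)=-t$ into the rate $q^\rbb_{n,m}(-t)\,T'(s)$ of $\tilde M$ at time $s$, where $T'(s)=-1/R'(t)=1/\bigl((b-1)\,\gamma\,c(t)^{\,b-1}\bigr)$. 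Multiplying the two, the factor $\gamma\,c(t)^{\,b-1}$ cancels exactly and $b_j(t)\,T'(s)=C_j/(b-1)$ is constant; consequently the time-changed rate from $n$ to $m$ is $n\,\tfrac{C_j}{b-1}+\tfrac{j\,C_j}{b-1}$ with $j=m-n+1$, which is time-homogeneous and of the required branching-plus-immigration shape.

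I would finish by reading off the offspring law $\rho_j\propto C_j=|\binom{b}{j}|$ ($j\ge 2$) and the immigration jump rates $\mu_{j-1}=\tfrac{j\,C_j}{b-1}$ ($j\ge 2$), checking that $\sum_{j\ge2}C_j$ and $\sum_{j\ge2}j\,C_j$ converge (which holds since $|\binom{b}{j}|\sim c\,j^{-b-1}$ with $b>1$), so that the branching and immigration distributions are well defined, and noting that $\tilde M$ issues from $0$, matching a Galton-Watson process with immigration started from the empty population. The quadratic case $b=2$ degenerates to $C_2=1$, $C_j=0$ for $j\ge 3$, i.e. binary branching. The main obstacle is precisely the exact cancellation in the last step: it succeeds \emph{only} because the derivatives $\psi^{(j)}$, $j\ge 2$, of a stable mechanism are pure powers $\lambda^{\,b-j}$, making $c(t)^{\,j-1}|\psi^{(j)}(c(t))|$ proportional to $c(t)^{\,b-1}$ for \emph{every} $j$, and because this same power is exactly the one produced by the Jacobian $T'$. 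Identifying the time change \reff{eq:def-R} that realizes this matching is the heart of the argument.
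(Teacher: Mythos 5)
Your proof is correct, and it reaches exactly the rates \reff{rateX} of the paper (with $j=m-n+1$, your $(n+j)C_j/(b-1)=(m+1)p_{m-n+1}$ where $p_j=|\binom{b}{j}|/(b-1)$ is the offspring law with generating function $g_B$, and your $\mu_{j-1}=jC_j/(b-1)=jp_j$ reproduces the paper's identity $np_n=\frac{b}{b-1}q_{n-1}$, hence immigration rate $\frac{b}{b-1}$ and immigrant-number generating function $g_I$). However, you take a genuinely different route. The paper does not pass through the birth rates at all: it proves Theorem \ref{thm:gwi} (the precise form of Theorem \ref{thm:main}) via two lemmas on the jump chain, namely Lemma \ref{lem:dist_jumps} (the jump sizes $\xi_n$ are conditionally independent of the jump times and have generating function $g_{[M^0_{-\tau_{n-1}}]}$, obtained from the Poissonian structure of the forest and Theorem 2.7.1 of \cite{DLG02}) and Lemma \ref{lem:dist_Delta} (after the time change the holding times are exponential with parameter $\tilde M_{\tilde\tau_n}+\frac{b}{b-1}$); these two lemmas also establish the Markov property of $\tilde M$ rather than assuming it. Your argument instead starts from Theorem \ref{theo:main-b-d}(i), i.e.\ Proposition \ref{prop:q-b}, which the paper proves independently in Section \ref{sec:bd-rate} from the two-time Laplace transform \reff{eq:lap-M00}, so there is no circularity. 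What your route buys is brevity and a transparent explanation of \emph{why} the time change \reff{eq:def-R} works: for a stable mechanism every $c(t)^{j-1}|\psi^{(j)}(c(t))|$ is proportional to the single function $\gamma c(t)^{b-1}=-R'(t)/(b-1)$, which the Jacobian cancels. What the paper's route buys is a self-contained Section \ref{sec:GW-result} whose intermediate objects (the jump-size laws $g_{[n]}$, the decomposition into immigration versus branching jumps) are reused later, e.g.\ in Proposition \ref{PRMage}.

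Two small points you should make explicit to close the argument. First, identifying the law of a time-inhomogeneous pure birth process from its rates requires finite total rate out of each state and non-explosion; you check $\sum_{j\ge2}C_j=b-1$ and $\sum_{j\ge2}jC_j=b$ (so the total rate out of $n$ is $n+\frac{b}{b-1}$, linear in $n$, hence no explosion), but you should also note that the total rate out of state $0$ at forward time $-t$, namely $b\gamma c(t)^{b-1}=b\alpha/(\expp{(b-1)\alpha t}-1)$, is integrable as $t\to+\infty$, so the entrance from $0$ at time $-\infty$ is well posed and matches $\tilde M_0=0$. Second, the decomposition $q^\rbb_{n,m}(-t)=n\,b_j(t)+j\,b_j(t)$ is one particular branching-plus-immigration split; it is the right one here precisely because it reproduces $(k+1)p_{k-n+1}$, i.e.\ the size-biased structure of Remark \ref{rem:size-biased}, and it is consistent with the paper's normalisation of $g_I$ in \reff{eq:def-gi}.
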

The characteristics of the Galton-Watson process (length of the
branches, immigration rate, offspring distribution, immigration size)
are precised in Theorem \ref{thm:gwi}. This process may also be viewed as a sized-biased continuous-time Galton-Watson process, see Remark \ref{rem:size-biased}.

\medskip  As a  corollary of  this theorem,  we study  the sizes  of the
families of the extant population  ranked according to their immigration
time. The vector  of the sizes of  these families in the  stable case is
distributed as the  jumps of a time-changed subordinator  which yields a
Poisson-Kingman  distribution (see Remark \ref{RemSub}). In the quadratic case (see Corollary
\ref{propPD}) this corresponds  to a Poisson-Dirichlet distribution. The computations  of Proposition \ref{prop:moments} prove
that for $b\in (1, 2)$,   the  distribution  of  the  sizes of  these
families  is  not  a  Poisson-Dirichlet
distribution  since  a sized-biased sample  of the  vector of
sizes is not Beta-distributed (except maybe for one very particular case).
\medskip

In   the   stable   critical  case   $\psi(\lambda)=   \lambda^b$   with
$b\in (1, 2]$, the previous results  do not make sense since the total
population  size  is  always  infinite  and  the  ancestral  process  is
trivially infinite at all times.  To  get a finite extant population, we
restrict our attention to the extant individuals whose initial immigrant
arrived  after some  fixed  time $-T$.   Theorem \ref{thm:main}  remains
valid  in this  setting with  a different  change of  time, see  Theorem
\ref{thm:gwiT}.  
\medskip

If the Grey condition is not  satisfied, it is always possible to define
the  genealogy of  a  CBI process  whose immigration mechanism   is given  by $\phi$  in
\reff{eq:def-phi}, see Corollary 3.3 in  \cite{CD12}. However, the ancestral
process is again trivially infinite at all time. This is for example the
case       for        the       Neveu's        branching       mechanism
$\psi(\lambda)=\lambda \log(\lambda)$ which appears as the natural limit
of  the stable  branching mechanism  $\psi(\lambda)=\lambda^b$ when  $b$
goes down  to 1.  There  is a natural link  between the CB  with Neveu's
branching   mechanism  and   the  Bolthausen-Sznitman   coalescent,  see
\cite{BLe00}. Inspired  by this result, the following result, see
Proposition  \ref{prop:BS},  gives
that looking backward  the genealogical tree
in the stationary stable case, one recovers, as $b$ decreases to 1, 
the Bolthausen-Sznitman   coalescent. 
Let $T>0$ and $n\geq 1$.  Conditionally  on the  number of ancestors at time $-T$ of
the extant population being $n$, that is on 
$\{M_{-T}^0=n-1\}$, we label them from $1$ to $n$
uniformly at random.  Define a continuous time 
process $(\Pi^{T,  [n]}(t), t\geq T)$ taking values in the partitions of
 $[n]=\{1,\,2,\,\cdots, n\}$, by   $\Pi^{T,  [n]}(t)$ is the
partition of $[n]$  such that $i$ and  $j$ are in the same  block if and
only if  the $i$-th and $j$-th  individuals at level $-T$  have the same
ancestor at level $-t$. 

\begin{theo}
\label{prop:BS-intro} 
 Assume  $\psi$   is given  by  \reff{eq:def-psi-b} (with $\alpha>0$ and
 $b\in (1,2]$) and $\phi$  by \reff{eq:def-phi}.  The law of $(\Pi^{T, [n]}(T\expp{\gamma
  t}), t\geq0)$ conditionally on $\{M_{-T}^0=n-1\}$,  converges in the
sense of finite dimensional distribution to a Bolthausen-Sznitman
coalescent as $b$ decreases to 1. 
\end{theo}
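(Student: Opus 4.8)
The plan is to reduce the claim to the convergence of the infinitesimal generators of finite partition-valued Markov chains, and then to extract the limit from the explicit death rate of Theorem~\ref{theo:main-b-d}(ii). Throughout I fix $n\geq1$ and work conditionally on $\{M^0_{-T}=n-1\}$, so that there are exactly $n$ ancestors at level $-T$, labelled uniformly at random; I write $s$ for the backward level and $t$ for the rescaled time with $s=T\expp{\gamma t}$. The process $(\Pi^{T,[n]}(s),\,s\geq T)$ takes values in the (finite) set of partitions of $[n]$, starts at $s=T$ from the singleton partition (an initial law not depending on $b$), and coarsens as $s$ increases. Two structural facts, inherited from the L\'evy-tree genealogy and the uniform labelling, are used: at each jump of the ancestral counting process exactly one multiple merger occurs, so $\Pi^{T,[n]}$ is a time-inhomogeneous $\Lambda$-type coalescent; and exchangeability makes the rate $\lambda_{N,k}(s)$ at which a prescribed set of $k$ among the current $N$ blocks merges depend only on $N,k,s$. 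Summing over the $\binom{N}{k}$ choices of merging blocks equates the total rate of the drop $N\to N-k+1$ with the death rate of Theorem~\ref{theo:main-b-d}(ii); with $M^0=\#\text{ancestors}-1$ this gives, for $2\leq k\leq N$,
\begin{equation*}
\lambda_{N,k}(s)=\frac{q^\rd_{N-1,N-k}(s)}{\binom{N}{k}}
=\frac{\bigl|\bar u^{(N-k)}(c(s))\bigr|}{\bigl|\bar u^{(N-1)}(c(s))\bigr|}\,\bigl|\psi^{(k)}(c(s))\bigr| .
\end{equation*}
After the time change the $t$-rate of a $k$-merger is $\gamma T\expp{\gamma t}\,\lambda_{N,k}(T\expp{\gamma t})$, and it suffices to prove that this converges, as $b\downarrow1$ and uniformly for $t$ in compacts, to a $t$-independent multiple of the Bolthausen--Sznitman rate $\lambda^{\mathrm{BS}}_{N,k}=\frac{(k-2)!\,(N-k)!}{(N-1)!}$.

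Next I would insert the stable formulas. From \reff{eq:def-psi-b} one has, for $k\geq2$, $|\psi^{(k)}(\lambda)|=\gamma\,b(b-1)\,\frac{\Gamma(k-b)}{\Gamma(2-b)}\,\lambda^{b-k}$; from \reff{eq:def-bu0} and \reff{eq:def-c} one gets the closed form $\bar u(\lambda)=\kappa\alpha\gamma^{1/(b-1)}\tilde\psi(\lambda)^{-b/(b-1)}$, together with the identities $\tilde\psi(c(s))=\alpha/(1-\expp{-(b-1)\alpha s})$ and $\gamma\,c(s)^{b-1}=\alpha/(\expp{(b-1)\alpha s}-1)$. Writing $\varepsilon=b-1$, $p=b/\varepsilon$ (so $p\varepsilon=b$) and $\theta=(\tilde\psi(c(s))-\alpha)/\tilde\psi(c(s))=\expp{-\varepsilon\alpha s}$, a Fa\`a di Bruno expansion of $\bar u=C\tilde\psi^{-p}$ (with $C=\kappa\alpha\gamma^{1/(b-1)}$) at $\lambda=c(s)$ represents $\bar u^{(j)}(c(s))$ as a sum over set partitions $\pi$ of $[j]$ of
\begin{equation*}
\bar u(c(s))\,c(s)^{-j}(-1)^{|\pi|}\frac{\Gamma(p+|\pi|)}{\Gamma(p)}\,\theta^{|\pi|}\prod_{B\in\pi}\frac{\Gamma(\varepsilon+1)}{\Gamma(\varepsilon-|B|+1)},
\end{equation*}
where I used $\prod_{B}\gamma\,c(s)^{\varepsilon-|B|}=(\gamma c(s)^{\varepsilon})^{|\pi|}c(s)^{-j}$ and $\gamma c(s)^\varepsilon=\tilde\psi(c(s))-\alpha$.

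The heart of the argument is the behaviour of these coefficients as $\varepsilon\to0$. Since $p\sim1/\varepsilon$ one has $\Gamma(p+\ell)/\Gamma(p)\sim\varepsilon^{-\ell}$, while the pole of the Gamma function gives, for a block of size $m$, $\Gamma(\varepsilon+1)/\Gamma(\varepsilon-m+1)\sim(-1)^{m-1}(m-1)!\,\varepsilon$; the powers $\varepsilon^{-|\pi|}$ and $\varepsilon^{|\pi|}$ cancel, so every partition contributes at the same leading order, and after collecting signs (using $\sum_B(|B|-1)=j-|\pi|$) I obtain
\begin{equation*}
\bigl|\bar u^{(j)}(c(s))\bigr|=\bar u(c(s))\,c(s)^{-j}\,S_j(\theta)\,(1+O(\varepsilon)),\qquad S_j(\theta):=\sum_{\pi\vdash[j]}\theta^{|\pi|}\prod_{B\in\pi}(|B|-1)! .
\end{equation*}
Since $\theta\to1$, the quantity $S_j(\theta)$ tends to $\sum_{\pi\vdash[j]}\prod_{B}(|B|-1)!=j!$ (count permutations of $[j]$ by cycle type). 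Feeding this into the formula for $\lambda_{N,k}(s)$, the powers of $c(s)$ combine into $c(s)^{b-1}$, so that $\gamma c(s)^{b-1}=\alpha/(\expp{\varepsilon\alpha s}-1)$ appears and the $t$-rate becomes
\begin{equation*}
\gamma T\expp{\gamma t}\lambda_{N,k}(T\expp{\gamma t})
=\gamma s\,\frac{\alpha\,b(b-1)}{\expp{\varepsilon\alpha s}-1}\,\frac{\Gamma(k-b)}{\Gamma(2-b)}\,\frac{S_{N-k}(\theta)}{S_{N-1}(\theta)}\,(1+O(\varepsilon)),\qquad s=T\expp{\gamma t}.
\end{equation*}
Using $b(b-1)=b\varepsilon$, $\expp{\varepsilon\alpha s}-1\sim\varepsilon\alpha s$, $\theta\to1$ and $\Gamma(k-b)/\Gamma(2-b)\to(k-2)!$, the right-hand side tends to $\gamma\,\frac{(k-2)!\,(N-k)!}{(N-1)!}=\gamma\,\lambda^{\mathrm{BS}}_{N,k}$, which is indeed independent of $t$.

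The hard part will be to make this asymptotic analysis rigorous: one must control the Fa\`a di Bruno sum as $\varepsilon\to0$, where the blow-up of $p$ is exactly compensated by the Gamma-function poles, and check that all $O(\varepsilon)$ remainders are uniform for $s$, hence $t$, in compact sets. Granting this, the time-changed generators on the finite state space of partitions of $[n]$ converge, uniformly on compact $t$-intervals, to the constant generator $\gamma\,\mathcal L^{\mathrm{BS}}$ of the Bolthausen--Sznitman coalescent run at speed $\gamma$, while the initial laws at $t=0$ all equal the singleton partition. Convergence of the associated transition operators of these finite Markov chains then follows from a routine Gr\"onwall estimate on the Kolmogorov forward equations, which yields the announced convergence in the sense of finite-dimensional distributions; the constant $\gamma$ merely rescales time, so the limit is a Bolthausen--Sznitman coalescent.
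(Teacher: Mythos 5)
Your proposal follows essentially the same route as the paper's proof of Proposition \ref{prop:BS}: reduce the statement to convergence of the merger rates of the finite partition-valued chain, identify the rate at which a prescribed set of $k$ among $N$ blocks merges as $q^\rd_{N-1,N-k}(s)/\binom{N}{k}$ via the death rates of Theorem \ref{theo:main-b-d}(ii), and then compute the $b\downarrow1$ asymptotics. The only real methodological difference is in the last step: the paper observes that $\bar u^{(n)}$ and $(\alpha/\gamma)^{b/(b-1)}(\lambda^{-b})^{(n)}$ share the same one-signed coefficients $C_{b,k,n}$ and compares them directly, obtaining $|\bar u^{(n)}(c(t))|\sim n!\,c(t)^{-b-n}(\alpha/\gamma)^{b/(b-1)}\expp{-\alpha b t}$ in two lines, whereas you run a Fa\`a di Bruno expansion of $C\tilde\psi^{-p}$ and match the blow-up of $\Gamma(p+|\pi|)/\Gamma(p)$ against the poles of $\Gamma(\varepsilon-|B|+1)$, ending with the cycle-type identity $\sum_{\pi\vdash[j]}\prod_{B}(|B|-1)!=j!$. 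Both give the same asymptotics; the paper's shortcut sidesteps the uniformity issues you flag as ``the hard part''. Two further remarks. First, you assert rather than prove that $\Pi^{T,[n]}$ is a time-inhomogeneous exchangeable coalescent Markov chain; exchangeability of the labelling alone does not give the Markov property of the partition-valued process, and the paper derives it from the size-biased Galton--Watson representation (Remark \ref{rem:size-biased} together with Theorem \ref{thm:gwi}), an ingredient your argument still needs. Second, and more substantively, your limiting rate is $\gamma\,\lambda^{\mathrm{BS}}_{N,k}$ rather than $\lambda^{\mathrm{BS}}_{N,k}$, and your arithmetic appears to be the correct one: in the proof of Lemma \ref{lem:ratecov} the prefactor $\gamma$ of $\psi^{(n-m+1)}(\lambda)=\gamma\, b(b-1)\cdots(b-n+m)\lambda^{b-n+m-1}$ is dropped at the second equality, and restoring it cancels the $1/\gamma$ coming from $c(t)^{b-1}$, so that $\lim_{b\to1+}q^\rd_{n,m}(t)=\frac{n+1}{(n+1-m)(n-m)}\,\frac{1}{t}$. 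With the time change $T\expp{\gamma t}$ the limit is then the Bolthausen--Sznitman coalescent run at speed $\gamma$ (equivalently, the clean statement uses $T\expp{t}$). Your closing sentence, that the constant $\gamma$ ``merely rescales time'', concedes exactly this discrepancy; it should be stated explicitly rather than absorbed, since the theorem as written asserts convergence to the standard Bolthausen--Sznitman coalescent.
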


\subsection{Organisation of the paper}
Sections \ref{sec:notations} and  \ref{sec:CBI} are respectively devoted
to recall  known results on  CB process  and their genealogy  using real
trees, and  on CBI process  and the definition of  the number of
descendants of the  extant population $M=( M^0_t,t< 0)$.  For the stable
sub-critical  setting, we  present  in  Section \ref{sec:GW-result}  the
proof of Theorem \ref{thm:gwi} (and  thus of Theorem \ref{thm:main}) and
the study of  the sizes of the families of  the extant population ranked
according to  their immigration time.  We  compute the birth and  death rates of the  process $M$ in
Section \ref{sec:bd-rate} and apply these expressions in Sub-section  \ref{sec:BS-coal} to prove the convergence of the  ancestral process as $b$ goes down to
1 towards  the Bolthausen-Sznitman coalescent.  We  provide some results
in  Section   \ref{sec:cstable}  for   the  critical   stable  branching
mechanism.

\section{Notations}
\label{sec:notations}

Concerning probability measures and expectations, we shall use $\rP$ and
$\rE$ for usual real random variables or processes, $\P$ and $\E$ for
Lévy trees or Lévy forest,  and  $\bP$ and $\bE$ for the corresponding
stationary cases which involve immigration. 

The process $Y$ usually refer to a CB or CBI process (under $\rP$) and $Z$ usually
refer to  a CB (under $\P$) or a CBI (under $\bP$) built on a Lévy tree
or a Lévy forest.  
\medskip

We write  $\N=\{0, 1, \ldots\}$ for the set of integers and $\N^*=\{1,
2, \ldots\}$.  for the set of positive integers.

\subsection{Continuous branching
  processes}\label{sec:def-psi}

We refer to \cite{Bi76,Gr74,L20} for a presentation and general results on
CB   processes. We
recall that a CB  process with branching mechanism $\psi$ (denoted CB($\psi)$) is a c\`ad-l\`ag non-negative real-valued Markov process $Y=(Y_t,\ t\ge 0)$ whose transition kernels are characterized, for every $s,t,\lambda\ge 0$, by
\begin{equation}
   \label{eq:Laplace-cb}
\rE\left[\expp{-\lambda Y_{s+t}}\Bigm| Y_s\right]=\expp{-u(t,\lambda)\, Y_s},
\end{equation}
where $(u(\lambda, t); t\geq 0, \lambda\geq 0)$ is the unique
non-negative solution of the integral  equation
\begin{equation}
   \label{eq:dif-u-t}
 u(\lambda, t)+\int_0^ t \psi\bigl(u(\lambda,
s)\bigr)\, ds =\lambda, 
\end{equation}
or equivalently the unique non-negative solution of the integral equation
\begin{equation}\label{eq:def-u}
\int_{u(\lambda,t)}^\lambda \frac{dr}{\psi(r)}=t.
\end{equation}
We  set for $t>0$:
\begin{equation}
\label{eq:def_c}
c(t)=u(+\infty, t)=\lim_{\lambda\to+\infty}u(\lambda,t)
\end{equation}
which is finite thanks to the Grey condition, see
\eqref{eq:grey-kappa}.
\medskip

We denote by  $\rN$ the  canonical measure of the CB process $Y$:
\textit{i.e.} if $(Y^i)_{i\in I}$ are the atoms of a Poisson point
measure with intensity $r \rN(dY)$, then the process $(\tilde Y_t,t\ge 0)$
defined by 
\[
 \tilde Y_t=\sum_{i\in I}Y^i_t
\]
is distributed as $Y$ conditionally on $Y_0=r$.  In particular, we have for $\lambda,t\ge 0$:
\[
\rN\left[1-\expp{-\lambda
    Y_t}\right]=\lim_{r\to0}\frac{1}{r}\rE\left[1-\expp{-\lambda
    Y_t}\Bigm| Y_0=r\right]=u(\lambda,t)
\]
and the function $c(t)$ satisfies for $t>0$:
\begin{equation*}
c(t)=\rN[Y_t>0],\quad u(c(t), s)=c(t+s)\quad\text{and}\quad c'(t)=-\psi(c(t)).
\end{equation*}

\subsection{Continuous branching  process with immigration}
In general, the immigration mechanism  $\phi$ is the Laplace exponent of
a subordinator.   A stationary  CBI   process  associated  with  the  branching
mechanism $\psi$ and  the immigration mechanism $\phi$  is a c\`ad-l\`ag
non-negative   real-valued  Markov   process  $Y=(Y_t,t\in   \R)$  whose
transition   kernels  are   characterized,   for   every  $s,t\in   \R$,
$\lambda\geq 0$, by
\[
\rE\left[\expp{-\lambda Y_{s+t}}\Bigm| Y_s\right]=\exp\left(-u(\lambda,t)Y_s-\int_0^t\phi\bigl(u(\lambda,r)\bigr)dr\right)
\]
where $u$ is still the function given by \eqref{eq:def-u}. We refer to
\cite{KW71} for more results on CBI processes. 

Under the  Grey condition for  the branching mechanism $\psi$,  when the
immigration  mechanism  $\phi$ is  given  by  \reff{eq:def-phi}, then  the
process $Y$  can be viewed  as the  CB process with  branching mechanism
$\psi$ conditioned  on non-extinction.   This observation  motivates the
particular choice for this immigration mechanism.  \medskip

Assume that $\psi$ defined by \reff{eq:def-psi} satisfies \reff{eq:grey-kappa}
and that $\phi$ is given by  \reff{eq:def-phi}. Recall the function $c$
defined by \reff{eq:def_c}. 
Recall  $\bu$ defined in  \reff{eq:def-bu0}. 
Then, according to
Corollary  3.13   in  \cite{CD12},  we have that  
 for  every
$\lambda\ge0$ and  $t\in\R$,
\begin{eqnarray}\label{Zlaplace01}
\rE \left[\expp{-\lambda Y_t}\right]=\bu (\lambda).
\end{eqnarray}

\subsection{Real trees and L\'evy trees}

We refer to   \cite{DMT96,Ev08} for general results on real trees and to 
 \cite{DLG05} for Lévy trees. We recall that a metric space $(\bt,d)$ is a real tree if the following two properties hold for every $u,v\in\bt$.
\begin{itemize}
\item[(i)] There is a unique isometric map $f_{u,v}$ from $[0,d(u,u)]$ into $\bt$ such that
\[
f_{u,v}(0)=u\qquad\mbox{and}\qquad f_{u,v}\bigl(d(u,v)\bigr)=v.
\]
\item[(ii)] If $\varphi$ is a continuous injective map from $[0,1]$ into $\bt$ such that $\varphi(0)=u$ and $\varphi(1)=v$, then the range of $\varphi$ is also the range of $f_{u,v}$.
\end{itemize}
The range of the map $f_{u,v}$ is denoted $\lb u,v\rb$. It is the unique
continuous path that links $u$ to $v$ in the tree.  In order to simplify
the notations,  we often omit the  distance $d$ in the  notation and say
that $\bt$ is a real tree.

A rooted real tree is a  real tree $(\bt,d)$ with a distinguished vertex
$\root$  called the  root.  Two  real  trees (resp.  rooted real  trees)
$\bt_1$  and $\bt_2$  are  called  equivalent if  there  is an  isometry
(resp. a root-preserving isometry) that maps $\bt_1$ onto $\bt_2$.
We set $\T$ the set of all equivalence classes of rooted compact real
trees. We endow the set $\T$  with the pointed Gromov-Hausdorff distance
(see \cite{Ev08}) and the associated Borel $\sigma$-field. The set $\T$
is then Polish. 
\medskip

Let $\bt\in\T$ be a rooted tree. We define a partial order $\prec$
(called the genealogical order) on $\bt$ by: 
\[
u\prec v\iff u\in\lb \root ,v\rb \setminus\{v\}
\]
and we say in this case that $u$ is an ancestor of $v$. The height of a vertex
$u\in\bt$ is defined by
\[
H(u)=d(\root,u),
\]
and we denote by $H(\bt)=\sup\{d(\root,u),\ u\in\bt\}$ the height of the
tree $\bt$. Let  $a>0$. The truncation of $\bt$ at level $a$ is the tree
$ {\rm{Tr}}_a(\bt)=\{u\in\bt,\ H(u)\le a\}$, 
and the population of the tree $\bt$ at level $a$ is the sub-set
\begin{equation}
   \label{eq:def-cz}
\cz_\bt(a)=\{u\in\bt,\ H(u)=a\}.
\end{equation}
We denote by $(\bt^{(i),*},i\in I)$ the connected components of the open
set $\bt\setminus {\rm{Tr}}_a(\bt)$. For every $i\in I$, there exists a
unique point $\root _i\in z_\bt(a)$ such that  $\root _i\in\lb \root,
u\rb$ for every $u\in\bt^{(i),*}$. We then set
$\bt^{(i)}=\bt^{(i),*}\cup\{\root _i\}$ so that $\bt^{(i)}$ is a compact
rooted real tree with root $\root _i$ and we consider the point measure
on $\cz_\bt (a)\times \T$: 
\[
\cn_a^\bt=\sum_{i\in I} \delta_{(\root _i,\bt^{(i)})}.
\]

We now recall the definition of  the excursion measure associated with a
$\psi$-L\'evy  tree  from  \cite{DLG05}.   Let  $\psi$  be  a  branching
mechanism  defined by  \reff{eq:def-psi}. Then,  there exists  a measure
$\N$ on $\T$ such that:
\begin{itemize}
\item[(i)] \textbf{Existence of a local time.}
For every $a\ge 0$ and for $\N(d\ct)$-a.e. $\ct\in\T$, there exists a
finite measure $\ell^a$ on $\ct$ such that 
\begin{itemize}
\item[(a)] $\ell^0=0$ and, for every $a>0$, $\ell^a$ is supported on $\cz_\ct(a)$.
\item[(b)] For every $a>0$, $\{\ell^a\ne 0\}=\{H(\ct)>a\}$, $\N(d\ct)$-a.e.
\item[(c)] For every $a>0$, we have $\N(d\ct)$-a.e. for every bounded continuous function $\varphi$ on $\ct$,
\begin{align*}
\langle \ell^a,\varphi\rangle & =\lim_{\varepsilon\to0+}\frac{1}{c(\varepsilon)}\int\cn_a^\ct(du\ d\ct')\varphi(u)\ind_{\{H(\ct')\ge\varepsilon\}}\\
& =\lim_{\varepsilon\to0+}\frac{1}{c(\varepsilon)}\int\cn_{a-\varepsilon}^\ct(du\ d\ct')\varphi(u)\ind_{\{H(\ct')\ge\varepsilon\}}.
\end{align*}
\end{itemize}
\item[(ii)] \textbf{Branching property.}
For every $a>0$, the conditional distribution of the point measure $\cn_a^\ct(du\ d\ct')$, under the probability measure $\N(d\ct\,|\, H(\ct)>a)$ and given $\rm{Tr}_a(\ct)$, is that of a Poisson point measure on $\cz_\ct(a)\times \T$ with intensity $\ell^a(du)\N(d\ct')$.
\item[(iii)] \textbf{Regularity of the local time process.}
We can choose a modification of the process $(\ell^a,a\ge 0)$ in such a way that the mapping $a\longmapsto \ell^a$ is $\N(d\ct)$-a.e. c\`ad-l\`ag for the weak topology on finite measures on $\ct$.
\item[(iv)] \textbf{Link with CB processes.}
Under $\N(d\ct)$, the process $(\langle \ell^a,1\rangle,a\ge 0)$ is distributed as a CB($\psi$) process under $\rN$.
\end{itemize}
\medskip

If  necessary, we  shall write  $\ell^a(\ct)$ for  $\ell^a$ in  order to
stress the dependence  in the Lévy tree $\ct$. We  define the population
size process as  $Z=(Z_a, a\geq 0)$, where the ``size''  of the population
at level $a$ is given by:
\begin{equation}\label{eq:def-Z}
Z_a=\langle \ell^a(\ct),1\rangle.
\end{equation}
We recall that under $\N$, the process $Z$ is distributed as $Y$ under
the canonical measure $\rN$. 

\subsection{Forests}

\begin{defi}[Forest and leveled forest]
A forest is   a  family $\bff=(\bt_i)_{i\in I}$,  at most countable,  of
elements of $\T$. 
A leveled forest is a  family $\bar\bff=(h_i,\bt_i)_{i\in I}$, at most
countable,  of elements of $\R\times\T$. 
We denote by $\F$ (resp. $ \bar\F$) the set of (resp. leveled) forests. 
\end{defi}

If $\bar\bff=(h_i,\bt_i)_{i\in I}$ is a leveled forest, denoting by $d_i$ the distance in the tree $\bt_i$ and $\root_i$ the root of $\bt_i$, we can associate with it a tree $(\bt(\bar\bff),\bar d)$ by
\[
\bt(\bar\bff)=\R\sqcup \left(\bigsqcup _{i\in I}\bt_i^*\right)
\]
where $\sqcup$ denotes the disjoint union of sets, $\bt_i^*=\bt_i\setminus\{\root_i\}$, and, for every $u,v\in \bt(\bar\bff)$,
\[
\bar d(u,v)=\begin{cases}
|u-v| & \mbox{if }u,v\in \R,\\
d_i(u,v) & \mbox{if } u,v\in\bt_i^*,\\
|u-h_i|+d_i(\root_i,v) & \mbox{if }u\in\R\mbox{ and }v\in \bt_i^*,\\
d_i(\root_i,u)+|h_i-h_j|+d_j(\root_j,v) & \mbox{if } u\in\bt_i^*,\ v\in\bt_j^* \mbox{ with }i\ne j.
\end{cases}
\]

\begin{rem}
It is easy to check that $\bt(\bar\bff)$ is indeed a real tree. It is neither rooted nor compact, and can be seen as a tree with a two-sided infinite spine (the set $\R$).
\end{rem}

\begin{rem}
If $\bff=(h_i,\bt_i)_{i\in I}$ and $(h_i,\tilde\bt_i)_{i\in I}$ are two families of real numbers and real trees such that, for every $i\in I$, the trees $\bt_i$ and $\tilde \bt_i$ are equivalent, then the trees constructed by the above procedure are also equivalent, so the construction is valid for families of elements of $\R\times\T$.
\end{rem}

We extend the notion of ancestor in the tree $\bt(\bar\bff)$ by
\[
u\prec v\iff \begin{cases}
u<v & \mbox{if }u,v\in\R,\\
u\le h_i & \mbox{if }u\in\R\mbox{ and }v\in\bt_i^*,\\
u\prec_i v & \mbox{if }u,v\in\bt_i^*,
\end{cases}
\]
where $\prec_i$ denotes the genealogical order in the tree $\bt_i$. We also extend the notion of height of a vertex $u\in\bt(\bff)$ by
\[
H(u)=\begin{cases}
u & \mbox{if }u\in\R,\\
h_i+H_i(u) & \mbox{if }u\in\bt_i^*,
\end{cases}
\]
where $H_i$ denotes the height of a vertex in the tree $\bt_i$.

\begin{defi}(Ancestral  tree)\label{def:ancestral} Let  $\bar\bff$ be  a
  leveled forest and  let $\bar  \bt=\bt(\bar\bff)$ be  its associated
  tree.  For  every  $a\in\R$,  we   define  $\cz_{\bar  \bt}  (a)$  the
  population at height  $a$, by \reff{eq:def-cz} with  $\bt$ replaced by
  $\bar  \bt$  and  the  ancestral   tree  $\ca_{\bar  \bt}(a)$  of  the
  population at level $a$ by
\[
\ca_{\bar \bt}(a)=\cz_{\bar \bt}(a)\cup {\rm Anc}(\cz_{\bar \bt}(a)),
\]
where ${\rm Anc}(\cz_{\bar \bt}(a))=\cup_{v\in \cz_{\bar \bt}(a)} \{u\in
\bar \bt, \, u \prec v\} $ is the set of all the ancestors in
$\bar \bt$ of the vertices of $\cz_{\bar \bt}(a)$.
\end{defi}
When there is no confusion we write $\ca$  for $\ca_{\bar \bt}$. 
\section{The stationary Lévy tree}
\label{sec:CBI}

\subsection{Random forests, CB  and CBI processes}
\label{sec:random-forest}
Let  $\psi$  be  a  branching
mechanism  defined by  \reff{eq:def-psi}.
For $r>0$, we denote by $\P_r(d\bff)$ the probability distribution on
$\F$ of the random forest $\cf=(\ct_i)_{i\in I}$ given by the atoms of a poisson point measure on $\T$ with intensity $r\N(d\bt)$.
Under $\P_r$, the family $(\ell^a(\ct_i))_{i\in I}$ of the corresponding
local times at level $a\ge 0$ is well defined, and we define the local
time at level $a$ of the forest $\cf$ by 
\begin{equation}\label{eq:def-ell-forest}
\ell^a(\cf)=\sum_{i\in I} \ell^a(\ct_i).
\end{equation}
Let the  size-population process  $Z=(Z_a, a\geq  0)$ be  defined (under
$\P_r$) by \reff{eq:def-Z} with the local time $\ell^a(\ct)$ replaced by
$\ell^a(\cf)$.  By property (iv) of the Lévy tree excursion measure, and
the definition of the probability measure $\P_r$, we get that under $\P_r$, the process $Z$ is a CB started
at time 0 from $r$.  \medskip

If $\bff=(\bt_i)_{i\in I}$ is a forest and $h\in\R$, the pair $(h,\bff)$
can be viewed as the leveled forest $(h,\bt_i)_{i\in I}$. Eventually, a
family of leveled forests $(\bar\bff_i)_{i\in I}$ can be viewed as a
leveled forest since a countable disjoint union of countable sets
remains countable. Conversely a tree  is a forest, thus the measure $\N(d\bt)$
on $\T$ can be viewed as a measure $\N(d\bff)$ on $\F$. 

We denote by $\bP(d\bar \bff)$ the probability distribution on
$\bar \F$ of the random leveled forest $\bar\cf=(h_i,\cf_i)_{i\in I}$
given by the atoms of a Poisson point measure on $\R\times\F$ with
intensity 
\[
\nu(dh, d\bff)= dh \left(\beta\N[d\bff]+\int_0^{+\infty}\pi(dr)\,
  \P_r(d\bff)\right), 
\]
and let $\bar \ct=\bt(\bar \cf)$ be the random tree associated with this
leveled forest. The random tree $\bar  \ct$ under $\bP$ can be viewed as
stationary  version of  the Lévy  tree with  branching mechanism  $\psi$
conditioned  on  non-extinction, see  \cite{CD12},
Section 3.  We call the random tree $\bar \ct$ the stationary Lévy tree.
\medskip

For every $i\in I$, the local time measure $\ell^a(\cf_i)$ at level $a$
of the leveled forest $(h_i,\cf_i)$ is a.s. well-defined by
\eqref{eq:def-ell-forest}. We then define, for every $a\in\R$, the local
time measure at level $a$ for the tree $\bar \ct$ by
\begin{equation}\label{eq:def-ell}
\ell^a(\bar \ct) =\sum_{i\in I}\ell^{a-h_i}(\cf_i)_i\ind_{\{h_i\le a\}}. 
\end{equation}
By standard  property of Poisson  point measures, we have  the following
result,  where   $Z=(Z_a,  a\in \R)$   is  defined  (under   $\bP$)  by
\reff{eq:def-Z}   with  the   local  time   $\ell^a(\ct)$  replaced   by
$\ell^a(\bar \ct)$.

\begin{prop}
\label{prop:Z=cbi}
  Under $\bP$,  the process $Z$  is a stationary CBI  process associated
  with  the branching  mechanism  $\psi$ and  the immigration  mechanism
  $\phi$ given by \reff{eq:def-phi}.
\end{prop}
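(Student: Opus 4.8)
The plan is to identify the law of $Z$ under $\bP$ by computing Laplace functionals, using the exponential formula for the Poisson point measure $\bar\cf$ together with the branching property of the $\psi$-Lévy tree. Writing the contribution of each immigrant to the population at level $a$ as $Z^i_{a-h_i}=\langle \ell^{a-h_i}(\cf_i),1\rangle$, additivity of local times \reff{eq:def-ell} gives $Z_a=\sum_{i\in I} Z^i_{a-h_i}\ind_{\{h_i\le a\}}$. I would first record the two elementary identities that drive the whole computation: by property (iv) of the excursion measure together with $\rN[1-\expp{-\lambda Y_t}]=u(\lambda,t)$ one has $\N[1-\expp{-\lambda Z_t}]=u(\lambda,t)$, while under $\P_r$ the total population is a CB started from $r$, so $\int_\F\bigl(1-\expp{-\lambda\langle\ell^t(\bff),1\rangle}\bigr)\,\P_r(d\bff)=1-\expp{-r\,u(\lambda,t)}$.

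Stationarity and the one-dimensional marginal then come out together. Applying the exponential formula to the Poisson point measure with intensity $\nu$ and substituting $t=a-h$,
\[
\bE\!\left[\expp{-\lambda Z_a}\right]
=\exp\!\left(-\int_{-\infty}^{a}\!dh\int_\F\bigl(1-\expp{-\lambda\langle\ell^{a-h}(\bff),1\rangle}\bigr)\,\theta(d\bff)\right)
=\exp\!\left(-\int_0^{\infty}\phi\bigl(u(\lambda,t)\bigr)\,dt\right),
\]
where $\theta$ denotes the $dh$-marginal of $\nu$; the inner integral over $\F$ is evaluated by the two identities above, and the key point is that this combination collapses exactly to $\phi(u(\lambda,t))$ (this is precisely where the specific form of the intensity $\nu$ and of $\phi$ in \reff{eq:def-phi} must be matched). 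The resulting expression is independent of $a$, which already yields stationarity, and it should coincide with $\bu(\lambda)$ through \reff{Zlaplace01}; finiteness $Z_a<\infty$ a.s. follows from convergence of $\int_0^\infty\phi(u(\lambda,t))\,dt$, guaranteed by the second condition in \reff{eq:grey-kappa}.

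For the CBI transition kernel I would fix $a$ and $s>0$ and split $Z_{a+s}$ according to whether an immigrant arrived at level $\le a$ or in $(a,a+s)$. By the restriction property of Poisson point measures these two families are independent. For the immigrants arriving in $(a,a+s)$, the same exponential-formula computation (now with $t=a+s-h\in(0,s)$) produces the factor $\exp(-\int_0^s\phi(u(\lambda,t))\,dt)$. For the immigrants already present at level $a$, I would condition on $\mathrm{Tr}_a(\bar\ct)$ and invoke the branching property (property (ii) of the excursion measure, combined with the CB branching property): given $\ell^a(\bar\ct)$, the subtrees above level $a$ form a Poisson point measure driven by $\ell^a(\bar\ct)$ and $\N$, so their total mass at level $a+s$ is a CB($\psi$) run for time $s$ from $Z_a=\langle\ell^a(\bar\ct),1\rangle$, depending on the past only through $Z_a$. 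This produces the conditional factor $\expp{-u(\lambda,s)Z_a}$, and combining the two,
\[
\bE\!\left[\expp{-\lambda Z_{a+s}}\,\middle|\,\mathrm{Tr}_a(\bar\ct)\right]
=\exp\!\left(-u(\lambda,s)\,Z_a-\int_0^s\phi\bigl(u(\lambda,t)\bigr)\,dt\right),
\]
which is exactly the CBI transition identity and simultaneously establishes the Markov property.

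The main obstacle is this last step: justifying rigorously that, conditionally on $\mathrm{Tr}_a(\bar\ct)$, the evolution above level $a$ of the population present at level $a$ is a CB($\psi$) depending on the past only through the total mass $Z_a$, and that it is independent of the freshly immigrated forests. This requires assembling the branching property of each immigrant Lévy forest, the additivity \reff{eq:def-ell} of the local times, and the fact that the total mass of a CB evolves as a CB regardless of how the initial mass is spread over $\cz_{\bar\ct}(a)$, all while keeping careful track of the filtration generated by the truncated tree. Everything else reduces to direct applications of the exponential formula.
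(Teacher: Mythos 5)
Your outline is the standard argument and is essentially what the paper intends when it asserts the proposition ``by standard property of Poisson point measures'' (no written proof is given): the exponential formula for $\bar\cf$ handles the one-dimensional marginal and the immigrants arriving in $(a,a+s)$, while the branching property (ii) of the excursion measure, applied to each immigrant forest rooted below level $a$ and superposed (the resulting Poisson point measures add up to one driven by $\ell^a(\bar\ct)$), combined with (iv), yields the factor $\expp{-u(\lambda,s)Z_a}$; the independence of the two contributions is the restriction property of $\bar\cf$ to $\{h\le a\}$ versus $\{h>a\}$. The transition identity you arrive at is the correct CBI kernel, and your finiteness argument is right: after the change of variables $v=u(\lambda,t)$ one gets $\int_0^\infty\phi(u(\lambda,t))\,dt=\int_0^\lambda\phi(v)\psi(v)^{-1}dv$, and writing $(\psi'(v)-\alpha)/\psi(v)=(\log\tilde\psi(v))'+\alpha\bigl((\alpha v)^{-1}-\psi(v)^{-1}\bigr)$ with $\tilde\psi(v)=\psi(v)/v$ shows this is finite exactly under the second condition in \reff{eq:grey-kappa}.

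The step you explicitly defer --- the ``collapse'' of the inner integral to $\phi(u(\lambda,t))$ --- is, however, the only place where anything substantive must be checked, and it does not close with the intensity $\nu$ as printed. Your own two identities give, for $\theta(d\bff)=\beta\N[d\bff]+\int_0^{+\infty}\pi(dr)\,\P_r(d\bff)$, the value $\beta\,u(\lambda,t)+\int_{(0,+\infty)}\bigl(1-\expp{-r\,u(\lambda,t)}\bigr)\pi(dr)$, whereas $\phi(\mu)=2\beta\mu+\int_{(0,+\infty)}\bigl(1-\expp{-\mu r}\bigr)\,r\,\pi(dr)$: the coefficient of the $\N$-part must be $2\beta$, and the subordination measure must be $r\pi(dr)$ rather than $\pi(dr)$. (That this is the intended intensity is confirmed by the proof of Lemma \ref{lem:dist_jumps}, where $r\pi(dr)$ appears in the computation of $\bP(\tau_1^I<u\mid\cdot)$, and by the construction in \cite{CD12}.) A complete proof therefore has to carry out this computation rather than assert that it matches; doing so either forces the corrected intensity or refutes the claimed identity. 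With that matching made explicit, the rest of your plan goes through.
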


\medskip

\subsection{Branching        points        of       the        ancestral
  tree}\label{sec:notations_death}
Recall $\bar \ct$ is defined under $\bar \P$ in the previous section. 
For $t\in \R$, we write $\ca(t)$ the ancestral tree $\ca_{\bar \ct} (t)$
of the population at level $t$ defined by Definition
\ref{def:ancestral}.  Notice that  $\bar\P $-a.s. $\ca(t)$  has only a finite number of
vertices at any level $s<t$ and we set for $s<t$:
\begin{equation}\label{eq:def-M}
M_s^t=\Card\{u\in\ca_{\bar \ct} (t),\ H(u)=s\}-1.
\end{equation}
The number  $M_s^t$ is  exactly the  number of  individuals of  the tree
$\bar \ct$ at level $s$ that have descendants at level $t$, the immortal
(or two-sided infinite) 
spine  being  excluded (which  explains  the  -1  in the  definition  of
$M_s^t$).

Under $\bar\P  $, since the  intensity $\nu(dh, d\bff)$ is  invariant by
translation  in $h$,  we get that the  distribution of  the ancestral  tree
$\ca(t)$ does not  depend on $t\in\R$.  Therefore, we can  fix the level
at which  the current population is  considered, say $t=0$, and  look at
the ancestral  process $M^0=(M_s^0,s<0)$  which is a  pure-birth process
starting at time $s=-\infty$ from 0.  \medskip

We define the jumping times of the process $M^0$  inductively by setting
\begin{eqnarray}\label{tau0}
\tau_0=\sup\{t>0,\ M_{-t}^0\ne 0\}
\end{eqnarray}
and for $n\ge 1$,
\begin{eqnarray}\label{taun}
\tau_n=\sup\{t<\tau_{n-1},\ M_{-t}^0\ne M_{(-t)-}^0\},
\end{eqnarray}
and we define the size of the $n$-th jump of the process $M^0$, $n\ge 0$, by
\begin{eqnarray}\label{jumpsize}
\xi_n=M_{-\tau_n}^0-M_{-(\tau_n)-}^0=M_{-\tau_n}^0-M_{-\tau_{n-1}}^0.
\end{eqnarray}

\begin{figure}[H]
\psfrag{T0}{$\tau_0$}
\psfrag{T1I}{$\tau_1^I$}
\psfrag{T1B}{$\tau_1^B$}
\includegraphics[width=10cm]{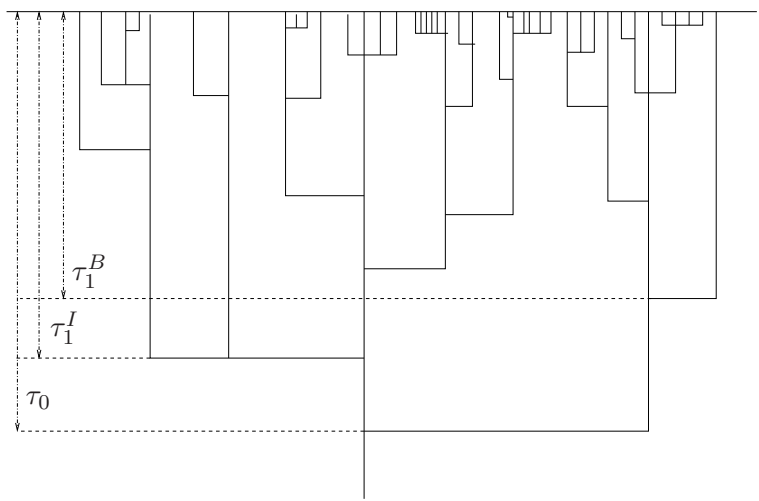}
\caption{The ancestral tree and the first jumping times.}
\end{figure}

In the sequel, we will distinguish between the jumps that are due to a new immigration (i.e. a branching point on the infinite spine) and those coming from a reproduction of an individual of the ancestral tree. For that purpose, recall that the tree is constructed from a random leveled forest $(h_i,\ct_i)_{i\in I}$.

We then define for every $n\ge 1$,
\begin{align}\label{tauIB}
\tau_n^I &=-\inf\{h_i>-\tau_{n-1},\ H(\ct_i)\ge -h_i\}\quad\mbox{and}\\
\tau_n^B &=-\inf\{t>-\tau_{n-1},\ M_t^0\ne M_{t-}^0\mbox{ and } t\ne h_i\ \forall i\in I\},
\end{align}
so that
\begin{equation}
   \label{eq:t=min}
\tau_n=\tau_n^I\vee \tau_n^B. 
 \end{equation} 
Thanks to  Theorem 2.7.1 of \cite{DLG02}, we have, for every
$r\in[0,1]$, every $t>u>0$ and every $n\in\N^*$,
\begin{equation}\label{eq:xi_1^B}
\bar\E [r^{\xi_1}\bigm|\tau_0=t, \xi_0=n, \tau_1^B=u, \tau_1^I<u]=g_t(t-u,r),
\end{equation}
where
\begin{equation}\label{eq:def-g}
g_t(s,r)=r\,
\frac{\psi'(c(t-s))-\gamma_\psi(c(t-s),(1-r)c(t-s))}
{\psi'(c(t-s))-\gamma_\psi(c(t-s),0)} ,
\end{equation}
with
\[
\forall a,b\ge 0,\qquad\gamma_\psi(a,b)=\begin{cases}
\frac{\psi(a)-\psi(b)}{a-b} & \mbox{if }a\ne b,\\
\psi'(a) & \mbox{if }a=b.
\end{cases}
\]
On the other hand, by standard properties of Poisson point measures (see
also Proposition 5.2 in \cite{CD12}), we have:
\begin{equation}\label{eq:xi_1^I}
\bar\E [r^{\xi_1}\bigm|\tau_0=t, \xi_0=n, \tau_1^I=u,
\tau_1^B<u]=1-\frac{\phi((1-r)c(u))}{\phi(c(u))}\cdot 
\end{equation}

\section{Properties of the ancestral process in the sub-critical  stable case}
\label{sec:GW-result}

In this section, the branching mechanism $\psi$, the  immigration
mechanism $\phi$, and the function $\tilde \psi$ are given by 
\reff{eq:def-psi-b},  \reff{eq:def-phi} and \reff{eq:def-R}, 
that is, for $\lambda\geq 0$:
\begin{equation}
   \label{eq:def-psi-b-s}
\psi(\lambda)=\alpha\lambda+\gamma \lambda^b,
\quad
\phi(\lambda)=b\gamma\lambda^{b-1},
\quad
\tilde \psi(\lambda)=\alpha + \gamma
\lambda^{b-1} , 
\end{equation}
with  $\alpha>0$, $\gamma>0$ and  $b\in (1,2]$.   We recall\footnote{
According to Example 3.1 p.~62 in
\cite{l:mvbmp} (where  $v_t(\lambda)$ 
corresponds to $u(\lambda,t)$ in our setting), we also have 
$u(\lambda, t)=\expp{-\alpha t} \lambda \left[1+ \gamma \, \alpha^{-1}\,
(1- \expp{-\alpha (b-1)t }) \lambda^{b-1}\right]^{-1/(b-1)}$.}
  the extinction  probability
$c(t)$  defined by  \eqref{eq:def_c}  and  given  by \reff{eq:def-c}, we
recall  and
explicit the Laplace transform of the CBI $\bu$ as well as the
constant $\kappa$ defined in \reff{eq:def-kappa} and \reff{eq:def-bu0}:
\begin{equation}
\label{Zlaplace02}
c(t)=\left(\frac{\alpha}{\gamma\left(\expp{(b-1)\alpha
        t}-1\right)}\right)^{\frac{1}{b-1}},
\quad
\bu(\lambda)=\left(1+\frac{\gamma}{\alpha}
  \lambda^{b-1}\right)^{-\frac{b}{b-1}}   
\quad\text{and}\quad
\kappa=\left(\frac{\alpha}{\gamma}\right)^{\frac{1}{b-1}}.
\end{equation}
The
expression of the function $g_t(s,r)$ of \eqref{eq:def-g} does not
depend on $s$ and $t$. We have for $r\in [0, 1]$:
\begin{equation}\label{eq:g_stable}
g_t(s,r)=g_B(r)
\quad\text{with}\quad
g_B(r)=\frac{br-1+(1-r)^b}{b-1}\cdot
\end{equation}
We also define the generating function $g_I$ by, for $r\in [0, 1]$:
\begin{equation}
   \label{eq:def-gi}
g_I(r)= \frac{(b-1)}{b}  g'_B(r)=1-(1-r)^{b-1}.
\end{equation}

\subsection{Distribution of the time-changed ancestral process}
We explicit the time change given in \reff{eq:def-R}: for $t>0$
\[
R(t)
=\log \left(\frac{\tilde \psi(c(t))}{\tilde \psi(0)}\right)
=\log\left(\frac{\expp{(b-1)\alpha
      t}}{\expp{(b-1)\alpha t}-1}\right). 
\]
The function $R$ is continuous and strictly decreasing; we also have
that   $\lim _{t\to 0}R(t)=+\infty$ and $\lim_{t\to+\infty}R(t)=0$. Thus
the function $R$ is one-to-one from $(0,+\infty)$ to $(0,+\infty)$.
We consider the time-changed ancestral process $\tilde M= (\tilde M_t,t\ge 0)$ defined by 
$\tilde M_0=0.$ and for $t>0$: 
\[
\tilde M_t =M_{T(t)}^0
\quad\text{with}\quad
T(t)=-R^{-1}(t). 
\]

The next theorem, whose proof is given in Section \ref{sec:pgwi},
is the  main result of this section. It states that the ancestral process
is a continuous-time Galton-Watson process with immigration (GWI process). 
\begin{theo}\label{thm:gwi}
  Consider     the    sub-critical     stable    branching     mechanism
  with immigration 
  \eqref{eq:def-psi-b-s}. The time-changed ancestral process $\tilde M$ is
  distributed under $\bP $ as a GWI process, $X=(X_t,t\ge 0)$, with:
\begin{itemize}
\item[(i)] $X_0=0$ a.s.;
\item[(ii)]  the branching rate of $X$ is 1;
\item[(iii)]  the offspring distribution has generating function $g_B$ defined
  in \reff{eq:g_stable};
\item [(iv)] the immigration rate is $\frac{b}{b-1}$;
\item[(v)]  the number of immigrants has generating function $g_I$
  defined in \reff{eq:def-gi}.
\end{itemize}
\end{theo}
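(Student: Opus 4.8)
The plan is to identify the time-changed ancestral process $\tilde M$ with a GWI process by verifying, step by step, that the five listed characteristics (i)--(v) hold. The natural strategy is to work with the jump structure of $M^0$ already set up in \reff{tau0}--\reff{eq:xi_1^I}, and to show that after the deterministic change of time $T=-R^{-1}$, the holding times become exponential with the claimed rates and the jump sizes have the claimed generating functions $g_B$ and $g_I$, independently of the current state. Concretely, I would first establish that $\tilde M_0 = 0$ (immediate from $T(0)=\lim_{t\to 0^+}(-R^{-1}(t))=-\infty$ and $\lim_{s\to-\infty}M_s^0=0$), giving (i). The heart of the matter is then to analyse the two competing jump mechanisms --- immigration events ($\tau_n^I$) and internal branching events ($\tau_n^B$) --- separately, compute their rates in the original time parametrization, and push them through the time change.

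The key computation is to determine the instantaneous rate at which each type of jump occurs. For a pure-birth process with finitely many ancestors, when $M^0$ is in state $m$, each of the $m+1$ lineages (the $m$ visible ancestors plus the spine) can branch, and the spine additionally carries immigration. Using the stable form \reff{eq:def-psi-b-s} together with the explicit $c(t)$ in \reff{Zlaplace02}, I expect the branching and immigration intensities, expressed in the $R$-variable via $R(t)=\log\!\bigl(\expp{(b-1)\alpha t}/(\expp{(b-1)\alpha t}-1)\bigr)$, to become \emph{state-independent and time-homogeneous}. The crucial algebraic fact making this work is already recorded in \reff{eq:g_stable}: the branching generating function $g_t(s,r)=g_B(r)$ does not depend on $s$ or $t$ in the stable case. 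For the holding times, I would compute the probability that no jump of a given lineage occurs in a time interval and show, after substituting $t=R^{-1}(\cdot)$, that the survival function is exponential. The factor $\tilde\psi(\lambda)=\alpha+\gamma\lambda^{b-1}$ appearing in $R$ is chosen precisely so that the total branching rate per lineage becomes the constant $1$, yielding (ii); the offspring law is then read off from \reff{eq:g_stable}, giving (iii). For immigration, using \reff{eq:xi_1^I} and the stable $\phi(\lambda)=b\gamma\lambda^{b-1}$, one gets $1-\phi((1-r)c)/\phi(c)=1-(1-r)^{b-1}=g_I(r)$, which is (v); tracking the normalization of the immigration intensity through $R$ should produce the constant rate $\frac{b}{b-1}$, giving (iv).

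To make the Markov and homogeneity claims rigorous, I would argue that conditionally on the past, the next jump time and jump size of $\tilde M$ depend only on the current value $\tilde M_t=m$ and not on $t$. This reduces to showing that the superposition of the per-lineage branching clocks (there being $m+1$ lineages when $M^0=m$) and the single immigration clock on the spine, after time change, yields exponential inter-jump times whose rate scales correctly with $m$ for branching and is constant for immigration --- exactly the generator of a GWI process. The translation invariance of the intensity $\nu(dh,d\bff)$ noted just before \reff{eq:def-M}, which already gives stationarity of $\ca(t)$ in $t$, underpins the homogeneity after time change. The formulas \reff{eq:xi_1^B} and \reff{eq:xi_1^I} handle the jump-size distributions conditioned on the \emph{type} of the jump, so I would combine them with the relative rates of $I$- versus $B$-jumps to recover the full one-step transition mechanism.

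The main obstacle I anticipate is the holding-time computation: showing that the time change $T=-R^{-1}$ turns the genuinely inhomogeneous birth rates $q^\rbb_{n,m}(-t)$ of Theorem \ref{theo:main-b-d}(i) into time-homogeneous, affine-in-state rates. This requires carefully tracking how the rate of each branching event transforms under the change of variables $d t \mapsto d R$, i.e.\ verifying that $q^\rbb$ multiplied by the Jacobian $|R'(t)|^{-1}$ collapses to a constant times $(m+1)$ for the total branching rate and to the constant $\frac{b}{b-1}$ for immigration. The delicate point is confirming that all $t$-dependence cancels after substitution --- precisely the role played by the specific choice of $\tilde\psi$ in \reff{eq:def-R} --- and that the per-lineage rate is exactly $1$ so that branching rate $1$ and offspring law $g_B$ are consistent. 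Once the rates are shown to be state-affine and time-constant and the two jump-size generating functions are identified as $g_B$ and $g_I$, the identification with the GWI process $X$ characterized by (i)--(v) follows from the standard characterization of continuous-time branching-with-immigration chains by their generator.
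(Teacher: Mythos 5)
Your plan follows the paper's own proof: the paper likewise separates immigration jumps from branching jumps, and its two key lemmas do exactly what you outline --- Lemma \ref{lem:dist_jumps} computes the survival functions of $\tau_1^I$ and $\tau_1^B$, deduces the relative weights of the two jump types, and mixes \reff{eq:xi_1^B} and \reff{eq:xi_1^I} into the state-dependent (but time-independent) generating function $g_{[n]}$ of \reff{eq:gf_jumps2}, while Lemma \ref{lem:dist_Delta} pushes the holding times through $R$ to obtain exponentials of rate $\tilde M_{\tilde\tau_n}+\frac{b}{b-1}$. The only cosmetic difference is that the paper works directly with the explicit survival functions rather than with infinitesimal rates and a Jacobian, so it never invokes the rates of Theorem \ref{theo:main-b-d}, which are established later and independently.
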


Recall that the distribution of the process $X$ is characterized by the Markov property and its infinitesimal transition probabilities. Let us denote by $p=(p_n,n\ge 0)$ (resp. $q=(q_n,n\ge 0)$) the distribution on $\N$ associated with the generation function $g_B$ (resp. $g_I$).

First, since $p_0=g_B(0)=0$, we have, for every $t\ge 0$, $h>0$ and every $k<n$
\[
\rP(X_{t+h}=k|X_t=n)=0.
\]

Furthermore, by Equation \eqref{eq:def-gi}, we have, for every $n\ge 1$,
\[
np_n=\frac{b}{b-1}q_{n-1}.
\]
Therefore,
as $h\rightarrow0+$, we have for every $0\le n<k$,
\[
\rP(X_{t+h}=k|X_t=n)=\left(np_{k-n+1}+\frac{b}{b-1}q_{k-n}\right)h+o(h)=(k+1)p_{k-n+1}h+o(h).
\]
Eventually, since $p_1=g'_B(0)=0$, we have, for every $n\ge 0$, as $h\to 0+$,
\[
\rP(X_{t+h}=n|X_t=n)=1-\sum_{k=n+1}^{+\infty}(k+1)p_{k-n+1}h+o(h)=1-\left(\frac{b}{b-1}+n\right)h+o(h).
\]

To sum up, we have the following transition rates for the GWI process X as $h\to 0+$,
\begin{eqnarray}\label{rateX}
\rP(X_{t+h}=k|X_t=n)=
\begin{cases}
 (k+1)p_{k-n+1}h+o(h) & \text{ if $k\ge n+1$},\\
1-\left(\frac{b}{b-1}+n\right)h+o(h) & \text{ if $k=n$},\\
o(h) & \text{ otherwise}.
\end{cases}
\end{eqnarray}
In particular, if $(\tau'_n,n\ge 0)$ is the sequence of
jumping times of $X$ (with $\tau'_0=0)$, we have for $r\in [0, 1]$, $n,k\ge 0$,
\begin{equation}
   \label{eq:gf_jumps1}
\rE\left[r^{X_{\tau'_{n+1}}-X_{\tau'_n}}\bigm|X_{\tau'_n=k}\right]=g_{[k]}(r),
\end{equation}
where for $r\in [0, 1]$, 
\begin{align}
\nonumber
g_{[k]}(r)
&=
\frac{k}{k(b-1)+b}\left(br-1+(1-r)^b\right)
+\frac{b}{k(b-1)+b}\left(1-(1-r)^{b-1}\right) \\
\label{eq:gf_jumps2}
&= \frac{k(b-1)}{k(b-1)+b}g_B(r)+ \frac{b}{k(b-1)+b} g_I(r). 
\end{align}
\medskip

\begin{rem}\label{rem:size-biased}
  Let $\chi=(\chi_t, t\ge 0)$ be a continuous-time Galton-Watson process
  (GW  process)  with  branching   rate  1,  offspring  distribution
  $p$ and starting at $\chi_0=1$. Recall that the size-biased version of
  $\chi$ is the process $\hat \chi=(\hat \chi_t, t\ge 0)$ such that 
 for every
  $T>0$ and every bounded measurable functional $\varphi$, we have:
\begin{equation}
\label{eq:size-biased}
\rE\left[\varphi(\hat \chi_t,\, t\in [0, T])\right]
=\frac{1}{\rE[\chi_T]}\rE\left[\chi_T\, \varphi(\chi_t,\, t\in [0, T])\right].  
\end{equation}
Then, the  GWI process $X$ of Theorem \ref{thm:gwi}  is distributed as
$\hat \chi-1$. 

Indeed, the process $\hat \chi$ is a Markov process as a Doob
h-transform of a Markov process (the process $(\chi_t/\E[\chi_t],t\ge
0)$ is a martingale). Its transition rates are given by the following
computations. For every $t\ge 0$, $\varepsilon>0$ and every integers
$1\le n<k$, we have: 
\begin{align*}
\frac{1}{\varepsilon}\rP(\hat \chi_{t+\varepsilon}=k|\hat \chi_t=n) 
& =\frac{1}{\varepsilon}\frac{\rE[\ind_{\{\hat \chi
  _{t+\varepsilon}=k, \, \hat\chi_t=n\}}]}{\rE[\ind_{\{\hat \chi _t=n\}}]}\\
&  =\frac{1}{\varepsilon}\frac{\rE[\chi_{t+\varepsilon}\ind_{\{\chi_{t+\varepsilon}=k,
  \, \chi_t=n\}}]}{\rE[\chi_t\ind_{\{\chi_t=n\}}]} 
\frac{\rE[\chi_t]}{\rE[\chi_{t+\varepsilon}]}\\
&  =\frac{1}{\varepsilon}\frac{k}{n}
  \rP(\chi_{t+\varepsilon}=k|\chi_t=n)\frac{\rE[\chi_t]}
{\rE[\chi_{t+\varepsilon}]}\cdot  
\end{align*}
We deduce that for $0\le n<k$: 
\[
\lim_{\varepsilon \rightarrow 0+} 
\frac{1}{\varepsilon}\rP(\hat \chi_{t+\varepsilon}-1=k|\hat \chi_t-1=n) 
=\frac{k+1}{n+1}(n+1)p_{k-n+1}.
\]
According to the transition rates given in \eqref{rateX}, we deduce that
$X$ is distributed as $\hat \chi -1$.  
\end{rem}

The following result is an application of Theorem \ref{thm:gwi}. Recall
$\kappa$ defined in \reff{Zlaplace02}. 
\begin{cor}\label{cor:martlim} Let $X$ be the GWI process  defined in
  Theorem \ref{thm:gwi}. Then there exists a random variable $W$
  distributed as $\kappa Z_0$
  under $\bP$, such
  that 
\begin{eqnarray}\label{martlim}
\lim_{t\rar\infty}\expp{-\frac{t}{b-1}}X_t\overset{a.s.}{=}W.
\end{eqnarray}
\end{cor}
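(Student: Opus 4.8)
The plan is to identify the classical martingale limit for the continuous-time Galton-Watson process with immigration $X$ of Theorem \ref{thm:gwi}, and then to identify the limiting random variable $W$ with $\kappa Z_0$ by matching Laplace transforms. First I would compute the mean growth rate of $X$. From the transition rates \eqref{rateX}, the process $X$ has branching rate $1$ with offspring generating function $g_B$ and immigration rate $b/(b-1)$; since $g_B'(1)=b/(b-1)$, the mean offspring per individual is $b/(b-1)$, so the Malthusian parameter is $m:=g_B'(1)-1=1/(b-1)$. A standard computation gives $\rE[X_t\mid X_0=0]=C(\expp{t/(b-1)}-1)$ for an explicit constant $C$, and more importantly the process $(\expp{-t/(b-1)}X_t,\ t\ge 0)$ is a nonnegative supermartingale (up to the deterministic immigration drift), hence converges almost surely to a limit I call $W$. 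This is the content of \eqref{martlim}; the almost sure convergence is the classical Kesten-Stigum / martingale convergence theorem for supercritical GWI processes, which applies here because the offspring law $p$ associated with $g_B$ has finite mean (and in fact $g_B$ is analytic on a neighborhood of $1$).

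The substantive part is to identify the distribution of $W$ as that of $\kappa Z_0$ under $\bP$. Here I would exploit the time change $T(t)=-R^{-1}(t)$ relating $\tilde M_t=M^0_{T(t)}$ to $X_t$, together with the known asymptotics of the ancestral process $M^0_s$ as $s\uparrow 0$. As $t\to+\infty$ we have $T(t)\to 0^-$, so $X_t=\tilde M_t=M^0_{T(t)}$ probes the behaviour of the number of ancestors of the extant population at levels approaching $0$. From \eqref{eq:def-R} one computes $R^{-1}(t)$ explicitly: solving $\log(\expp{(b-1)\alpha s}/(\expp{(b-1)\alpha s}-1))=t$ gives $\expp{(b-1)\alpha s}-1=(\expp{t}-1)^{-1}$, so $c(T(t))=(\frac{\alpha}{\gamma})^{1/(b-1)}(\expp{t}-1)^{1/(b-1)}=\kappa\,(\expp{t}-1)^{1/(b-1)}$, using the value of $\kappa$ from \eqref{Zlaplace02}. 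Thus $\expp{-t/(b-1)}c(T(t))\to\kappa$ as $t\to+\infty$. This identity is the bridge: it says the normalisation $\expp{-t/(b-1)}$ in \eqref{martlim} corresponds exactly, under the time change, to the normalisation $c(s)$ of the number of ancestors near level $0$.

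To finish, I would show that $\lim_{s\uparrow 0}c(s)\,(1+M^0_s)=\kappa Z_0$, $\bP$-a.s., and combine it with the displayed limit for $c(T(t))$. Intuitively each of the $1+M^0_s$ ancestors at level $s$ close to $0$ subtends, at level $0$, an independent extant mass whose total is $Z_0$; since the canonical non-extinction probability at a short remaining height behaves like $c$, the number of ancestors is asymptotically $Z_0/c(\text{height to }0)$ by the law of large numbers for the Poissonian structure of the L\'evy tree, which is exactly the excursion-theoretic content of the local-time property (i)(c) of the excursion measure $\N$. Writing $W=\lim_t \expp{-t/(b-1)}X_t=\lim_t \expp{-t/(b-1)}\,M^0_{T(t)}$ and inserting $1=c(T(t))/c(T(t))$ together with $\expp{-t/(b-1)}c(T(t))\to\kappa$ yields $W=\kappa\lim_{s\uparrow 0}c(s)(1+M^0_s)-\kappa\lim_t\expp{-t/(b-1)}=\kappa Z_0$, the stray constant disappearing since $\expp{-t/(b-1)}\to 0$. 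The main obstacle I anticipate is making the heuristic ``number of ancestors $\sim Z_0/c$'' rigorous: one must control the almost sure (not merely in-distribution) convergence of $c(s)(1+M^0_s)$ to $\kappa Z_0$, which requires the martingale identification above rather than a direct tree computation, so in practice the cleanest route is to prove the $\bP$-a.s. convergence \eqref{martlim} from GWI martingale theory and then identify the Laplace transform of $W$ via \eqref{eq:def-bu0}--\eqref{eq:def-bu0} and the stationarity $\rE[\expp{-\lambda Z_0}]=\bu(\lambda)$, checking that $\rE[\expp{-\lambda W}]=\rE[\expp{-\lambda\kappa Z_0}]=\bu(\lambda\kappa)$ matches the Laplace transform of the GWI martingale limit computed from $g_B$ and $g_I$.
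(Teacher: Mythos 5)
Your primary route has a genuine gap. You propose to get the almost sure convergence in \eqref{martlim} from ``classical Kesten--Stigum / martingale convergence for supercritical GWI processes'', after computing $\rE[X_t\mid X_0=0]=C(\expp{t/(b-1)}-1)$. But for $b\in(1,2)$ the immigration law has \emph{infinite} mean: from \eqref{eq:def-gi}, $g_I(r)=1-(1-r)^{b-1}$ gives $g_I'(1-)=+\infty$, hence $\rE[X_t]=+\infty$ for every $t>0$ and your formula for the mean is false. The ``nonnegative supermartingale up to the deterministic immigration drift'' argument therefore does not get off the ground, and the standard GWI limit theorems (Section III.7 of \cite{AN72}) do not apply; the paper makes exactly this point in the remark following the corollary and states that no such result was found in the literature for infinite immigration mean. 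So the route you ultimately declare ``cleanest'' is the one that fails (it would only work in the quadratic case $b=2$, where $g_I(r)=r$).

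The argument the paper actually uses is the one you sketch in your middle paragraph, and it is essentially complete once two slips are repaired. First, the almost sure convergence you worry about making rigorous is not re-derived from tree heuristics: it is $\lim_{s\downarrow 0} M^0_{-s}/c(s)=Z_0$ a.s., which is Corollary~6.5 of \cite{CD12} and is simply cited. Second, your final combination is garbled: the correct factorisation is
\[
\expp{-t/(b-1)}\,\tilde M_t=\Bigl[\expp{-t/(b-1)}c\bigl(R^{-1}(t)\bigr)\Bigr]\cdot\frac{M^0_{-R^{-1}(t)}}{c\bigl(R^{-1}(t)\bigr)}\;\longrightarrow\;\kappa\, Z_0 ,
\]
using your (correct) computation $c(R^{-1}(t))=\kappa(\expp{t}-1)^{1/(b-1)}$, which is \eqref{eq:CR-1}. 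One divides by $c$; there is no product $c(s)(1+M^0_s)$ converging to $\kappa Z_0$ and no stray $-\kappa\lim_t\expp{-t/(b-1)}$ term. With those corrections your middle paragraph \emph{is} the paper's proof, and the Laplace-transform identification of $W$ via $\bu$ becomes superfluous, since the limit is identified pathwise as $\kappa Z_0$.
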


\begin{proof}
It is known from Corollary 6.5 in \cite{CD12} that
 a.s. $\lim_{s\downarrow0}\frac{M_{-s}}{c(s)}=Z_0$.
Using the expressions of $R$ and $c$, we have:
\begin{equation}
   \label{eq:CR-1}
c\bigl(R^{-1}(t)\bigr)
=\left(\frac{\alpha}{\gamma}(\expp{t}-1)\right)^{\frac{1}{b-1}}, 
\end{equation}
and thus $\lim_{t\rar\infty}
\expp{-\frac{t}{b-1}}c(R^{-1}(t))=\kappa$. Then
\reff{martlim} follows readily from Theorem \ref{thm:gwi}.
\end{proof}
\begin{rem}
  If  a GW process or a GWI process has  finite  offspring mean and
  finite immigration mean,
  then limits such as \reff{martlim} are well-known, see for example
  Section III.7 in \cite{AN72}.  However,  as the immigration mean if
  infinite since $g'_I(1-)=+\infty $, we
  deduce that  in  our   setting   $\rE[X_t]=\infty$. We    have    not
  found results such as \reff{martlim} 
  in    the 
  literature. 
\end{rem}

\begin{rem}\label{RemLinnik} 
 According to \reff{Zlaplace01} and \reff{Zlaplace02}, one can check that
\begin{equation}
   \label{eq:LaplaceW}
\rE[\expp{-\lambda W}]=\left( 1 +\lambda^{b-1}\right)^{-\frac{b}{b-1}}
=\rE[\expp{-\lambda^{b-1} G}],
\end{equation}
where $G$ has the  $\Gamma(\frac{b}{b-1},1)$ distribution. For $b=2$,
one get that $W$ is $\Gamma(1, 2)$. For $b\in (1, 2)$, according to
Proposition 1.5  in \cite{BBS08}, using notations  from Propositions 4.2
and 4.3 in \cite{J10}, we get  that $W$ is distributed as $\chi_{b-1,b}$
and  thus  has  a generalized  positive Linnik  distribution  with
parameter $(b-1, b)$ see the first paragraph of Section 2.3 in
\cite{J10} and the  references therein. 
Remark 2.2 and (2.25)  in \cite{J10} give that $W$  has intensity $f_{b-1, b}$ on
$(0, +\infty )$, where for $\ra\in (0, 1)$, $\rbb>0$ and $z>0$:
\begin{equation}
   \label{eq:def-fbb}
f_{\ra, \rbb}(z)=\frac{1}{\pi}\int_0^{\infty} \frac{\expp{-zy}\sin (\pi
  \rbb F_{\ra
  }(y))}{[y^{2\ra}+2y^{\ra}\cos(\ra\pi)+1]^{\frac{\rbb}{2\ra}}}\, dy,
\end{equation}
with 
\[
F_{\ra}(y)=1-\frac{1}{\pi\ra}\cot^{-1}\left(\cot(\pi \ra)+\frac{y^{\ra}}{\sin
    (\pi \ra)}\right). 
\]
We also give another representation of the density of $W$ using the fact
that in our case $\rbb=\ra+1=b$. Indeed,
according to (4.7), Proposition 4.3 (iii) in \cite{J10} we have that:
\begin{equation}\label{eq:james}
f_{\ra, \ra+1}(z)= -z  f'_{\ra, 1}(z)
\end{equation}
using the representation of $f_{\ra, 1}$ from Proposition 2.8 
and (2.22):
\[
f_{\ra, 1}(z)=\int_0^\infty \frac{\expp{z/y} }{y} \Delta_{\ra, 1} (y)\, dy
\quad\text{with}\quad 
\Delta_{\ra, 1} (y) = 
\frac{1}{\pi} \frac{\sin(\pi ( 1-
  F_{\ra}(y)))}{[y^{2\ra}+2y^{\ra}\cos(\ra\pi)+1]^{\frac{1}{2\ra}}} \cdot
\]
\end{rem}

\begin{rem}
Let $\chi$ be the  GW process introduced in Remark
\ref{rem:size-biased}. 
Recall from \cite{AN72} Formula (4) p.~108 that
$\E[\chi_t]=\expp{\frac{t}{b-1}}$. Let $W'=\lim_{t\to+\infty}
\expp{-\frac{t}{b-1}}\chi_t$.  By \cite{J10} Proposition 4.1 and
Proposition 4.3 (iii), the distribution of $W'$ has density
$-f'_{b-1,1}$.  
Then, Equation \eqref{eq:james} readily implies that the distribution of
$W$ is the size-biased distribution of $W'$ i.e., for every bounded
continuous function $\varphi$, we have: 
\begin{equation}\label{eq:size-biased-W}
\rE[\varphi(W)]=\rE[W'\varphi(W')].
\end{equation}

Another way of getting this identity is to use the relationship between
the processes $X$ and $\chi$. For every bounded continuous function
$\varphi$, we have: 
\[
\rE\left[\varphi(\expp{-\frac{t}{b-1}}X_t)\right]=\rE\left[\frac{\chi_t}{\rE[\chi_t]}\varphi(\expp{-\frac{t}{b-1}}(\chi_t-1))\right].
\]
Moreover, the expression of $f_{b-1,1}$ implies that the variable $W'$ admits every moment of order $\theta<b$. Then the martingale $(\chi_t/\rE[\chi_t],t\ge 0)$ is uniformly integrable (see \cite{AR04} for this result for a dicrete time GW process) and taking the limit in the previous equation gives \eqref{eq:size-biased-W}.
\end{rem}

\subsection{Proof of Theorem \ref{thm:gwi}}
\label{sec:pgwi}

We first  prove two  intermediate lemmas,  the first  one on  the Markov
property  for the  ancestral process  $M^0$ and  the second  one on  the
distribution of the  jumping times of $\tilde M$.   Recall the notations
of   Subsection   \ref{sec:notations_death}   for  the   jumping   times
$(\tau_n, n\geq  0)$ and  the jumping  sizes $(\xi_n,  n\geq 0)$  of the
ancestral process $M^0$, see \reff{tau0}, \reff{taun} and \reff{jumpsize}.

\begin{lem}\label{lem:dist_jumps} We set $(\mathcal{G}_n, \, n\ge 0)$ the filtration generated by the process $((M_{-\tau_{n}}^0,\tau_{n}),n\ge 0)$.
Under $\bP$, for every $n>0$, conditionally given
$\mathcal{G}_{n-1}$, the random variables $\xi_n$ and
$\tau_n$ are independent. Moreover, the conditional distribution of the
random variable $\xi_n$ given $\mathcal{G}_{n-1}$  has 
generating function  $g_{[M_{-\tau_{n-1}}^0]}$, with $g_{[\cdot]}$
defined  in \eqref{eq:gf_jumps2}.
\end{lem}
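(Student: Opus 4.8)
The plan is to prove the statement first for the initial jump $(\xi_1,\tau_1)$ and then bootstrap to all $n$ by a regeneration (Markov) argument, reducing everything to the two conditional laws \reff{eq:xi_1^B}--\reff{eq:xi_1^I} and to a computation of the rates at which reproduction and immigration jumps occur. First I would record the Markov property of the embedded chain $((M^0_{-\tau_m},\tau_m),\,m\ge 0)$. Fix $n\ge 1$ and condition on $\mathcal G_{n-1}$, so that $M^0_{-\tau_{n-1}}=k$ and $\tau_{n-1}$ are known. By the branching property of the L\'evy tree together with the Poissonian structure of the immigration along the spine, the part of $\bar\ct$ above level $-\tau_{n-1}$ splits, conditionally on $\mathcal G_{n-1}$, into $k$ independent sub-families rooted at the $k$ current non-spine lineages and an independent fresh immigration on the spine. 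Hence the law of $(\xi_n,\tau_n)$ depends on $\mathcal G_{n-1}$ only through $(k,\tau_{n-1})$ and coincides with the law of a first jump issued from $\xi_0=k$ at level $\tau_0=\tau_{n-1}$. It therefore suffices to treat $(\xi_1,\tau_1)$, to which \reff{eq:xi_1^B} and \reff{eq:xi_1^I} (stated for an arbitrary state $\xi_0=k\in\N^*$) apply.

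I would then split the first jump according to its origin, using $\tau_1=\tau_1^B\vee\tau_1^I$ from \reff{eq:t=min}: the jump is a reproduction jump on $\{\tau_1^B>\tau_1^I\}$ and an immigration jump on $\{\tau_1^I>\tau_1^B\}$. Conditionally on $\mathcal G_0$ the two clocks are independent, since $\tau_1^B$ is measurable with respect to the $k$ sub-families while $\tau_1^I$ is measurable with respect to the immigration point process on the spine. On the reproduction event the jump size has generating function $g_B$ by \reff{eq:xi_1^B} and \reff{eq:g_stable}; on the immigration event it has generating function $g_I$ by \reff{eq:xi_1^I}, since $\phi(\lambda)=b\gamma\lambda^{b-1}$ gives $\phi((1-r)c(u))/\phi(c(u))=(1-r)^{b-1}$. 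The decisive point is that in the stable case \emph{neither} $g_B$ nor $g_I$ depends on the level $u$ at which the jump happens.

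The heart of the argument is the competition of the two clocks. Let $\lambda_B(s)$ and $\lambda_I(s)$ denote the instantaneous rates, as the level crosses $-s$, of a reproduction and of an immigration jump; independence of the clocks gives
\[
\bar\P(\tau_1\in ds,\ \mathrm{repr.}\mid\mathcal G_0)=\lambda_B(s)\,S(s)\,ds,
\qquad
\bar\P(\tau_1\in ds,\ \mathrm{imm.}\mid\mathcal G_0)=\lambda_I(s)\,S(s)\,ds,
\]
with a common survival factor $S(s)=\exp\bigl(-\int_s^{\tau_0}(\lambda_B(v)+\lambda_I(v))\,dv\bigr)$. I would compute that, from $k$ lineages, the reproduction rate is $k$ times the per-lineage branching rate $\psi'(c(s))-\psi(c(s))/c(s)=\gamma(b-1)c(s)^{b-1}$ (the normalising denominator in \reff{eq:def-g}), while the immigration rate is $\lambda_I(s)=\phi(c(s))=b\gamma c(s)^{b-1}$ (the normalising factor $\phi(c(u))$ in the denominator of \reff{eq:xi_1^I}). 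The common factor $c(s)^{b-1}$ cancels, so
\[
\frac{\lambda_B(s)}{\lambda_B(s)+\lambda_I(s)}=\frac{k(b-1)}{k(b-1)+b}
\]
is independent of $s$; since $S(s)\,ds$ also cancels in the conditional type probability, the jump is a reproduction (resp. immigration) jump with this probability (resp. its complement) \emph{independently of the value of} $\tau_1$.

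Putting the pieces together, for every bounded test function $f$,
\[
\bar\E\bigl[r^{\xi_1}f(\tau_1)\mid\mathcal G_0\bigr]
=\Bigl(\tfrac{k(b-1)}{k(b-1)+b}\,g_B(r)+\tfrac{b}{k(b-1)+b}\,g_I(r)\Bigr)\,\bar\E\bigl[f(\tau_1)\mid\mathcal G_0\bigr]
=g_{[k]}(r)\,\bar\E\bigl[f(\tau_1)\mid\mathcal G_0\bigr],
\]
where the level-independence of $g_B,g_I$ lets them come out of the integral over $s$ and \reff{eq:gf_jumps2} identifies the bracket as $g_{[k]}$. Taking $f\equiv 1$ gives the generating function of $\xi_1$, and the factorisation yields the conditional independence of $\xi_1$ and $\tau_1$; the Markov step of the first paragraph then delivers the statement for all $n$. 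The main obstacle is the third step: identifying the per-lineage reproduction rate and the immigration rate, and justifying the conditional independence of the clocks. This is exactly where the stable form of $\psi$ and $\phi$ enters, since it is the common factor $c(s)^{b-1}$ in both rates that makes the hazard ratio---hence the mixture $g_{[k]}$ and the independence from $\tau_1$---free of the level.
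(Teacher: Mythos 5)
Your proposal is correct and follows essentially the same route as the paper's proof: reduction to the first jump via the branching/Markov property, splitting $\tau_1=\tau_1^B\vee\tau_1^I$ into independent reproduction and immigration clocks, computing that the conditional probability of each jump type given $\tau_1$ equals $\tfrac{k(b-1)}{k(b-1)+b}$ (resp. its complement) independently of the level, and combining this with \reff{eq:xi_1^B}--\reff{eq:xi_1^I} to obtain the mixture $g_{[k]}$. The only difference is presentational — you phrase the competition of the clocks via hazard rates, whereas the paper writes out the explicit survival functions $\bigl(\expp{\alpha t}c(t)/\expp{\alpha u}c(u)\bigr)^b$ and $\bigl(\tilde\psi(c(t))/\tilde\psi(c(u))\bigr)^k$ and differentiates — and your identified rates $k\gamma(b-1)c(s)^{b-1}$ and $b\gamma c(s)^{b-1}$ agree with those computations.
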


\begin{proof}
According to Remark 5.6 in \cite{CD12}, we have for $t>0$:
\begin{equation}\label{eq:xi_0}
\bE [r^{\xi_0}\bigm|\tau_0=t]=1-(1-r)^{b-1}=g_I(r).
\end{equation}
Thus $\xi_0$ and $\tau_0$ are independent.
Then by the branching property, it suffices to study the case $n=1$. Let us first compute the conditional distribution of $\tau_1$.

Recall that $\tau_1=\tau_1^I\vee \tau_1^B$. By standard properties of Poisson point measures, we have
\begin{align}
\bP (\tau_1^I<u|\tau_0=t,M_{-\tau_0}=n) & =\exp\left\{-\int_u^tds\, \int_{(0,+\infty)}r\pi(dr)\P_r (H(\ct)>s)\right\}\nonumber\\
& =\exp\left\{-\int_u^tds\int_{(0,+\infty)}r\pi(dr)\left(1-\expp{-r\N[H(\ct)>s]}\right)\right\}\nonumber\\
& =\exp\left\{-\int_u^tds\, \phi(c(s))\right\}\nonumber\\
& =\exp\left\{-b\int_u^t\frac{\alpha\, ds}{\expp{(b-1)\alpha s}-1}\right\}\nonumber\\
& =\left(\frac{\expp{\alpha  t}c(t)}{\expp{\alpha u}c(u)}\right)^b.
\label{eq:dist_tau_1_I}
\end{align}
Moreover, by Theorem 2.7.1 of \cite{DLG02}, we have, using  $\tilde
\psi(\lambda)=\psi(\lambda)/\lambda$: 
\begin{equation}\label{eq:dist_tau_1_B}
\bP (\tau_1^B<u|\tau_0=t,M_{-\tau_0}=n)=\left(\frac{\tilde\psi(c(t))}{\tilde\psi(c(u))}\right)^n.
\end{equation}
Recall that $-c'(u)=\psi(c(u))=\alpha c(u)+\gamma c(u)^b$. We deduce the conditional distribution of $\tau_1$:
\begin{multline*}
\bP (\tau_1\in du|\tau_0=t,M_{-\tau_0}=n)\\
\begin{aligned}
&= \bP ( \tau_1^B\in du, \, \tau^I_1<u |  \tau_0=t,
M_{-\tau_0}=n) +  \bP ( \tau_1^I\in du, \, \tau^B_1<u |  \tau_0=t,
M_{-\tau_0}=n)\\
   & = \bP
     (\tau_1^B\in du|\tau_0=t,M_{-\tau_0}=n)
\bP (\tau_1^I<u|\tau_0=t,M_{-\tau_0}=n)\\ 
&\hspace{4.5cm} +\bar\P (\tau_1^I\in du|\tau_0=t,M_{-\tau_0}=n)
\bP (\tau_1^B<u|\tau_0=t,M_{-\tau_0}=n)\\
& = \left(\frac{\expp{\alpha  t}c(t)}{\expp{\alpha u}c(u)}\right)^b 
\left(\frac{\alpha+\gamma c(t)^{b-1}}{\alpha+\gamma
    c(u)^{b-1}}\right)^n \left[
\,\frac{n\gamma(b-1)(-c'(u))c(u)^{b-2}}{\alpha+\gamma
  c(u)^{b-1}}
+ b\left(-\alpha+\frac{(-c'(u))}{c(u)}\right)
\right] du\\
& =\left(\frac{\expp{\alpha  t}c(t)}{\expp{\alpha
      u}c(u)}\right)^b\left(\frac{\alpha+\gamma
    c(t)^{b-1}}{\alpha+\gamma
    c(u)^{b-1}}\right)^n
\left[n\gamma(b-1) c(u)^{b-1}+b\gamma c(u)^{b-1}\right]du\\ 
& =\left(\frac{\expp{\alpha  t}c(t)}{\expp{\alpha u}c(u)}\right)^b
\left(\frac{\alpha+\gamma c(t)^{b-1}}{\alpha+\gamma
  c(u)^{b-1}}\right)^n\gamma(nb+b-n)c(u)^{b-1}du .
\end{aligned}
\end{multline*}
We deduce that:
\begin{align*}
\bP ( \tau_1^B\in du, \, \tau^I_1<u | \tau_1\in du, \tau_0=t,
M_{-\tau_0}=n)
&= \frac{n(b-1)}{nb+b-n},\\
 \bP ( \tau_1^I\in du, \, \tau^B_1<u | \tau_1\in du, \tau_0=t,
M_{-\tau_0}=n)
&= \frac{b}{nb+b-n}\cdot   
\end{align*}
Using formulas \eqref{eq:xi_1^B}, \eqref{eq:xi_1^I}, 
\reff{eq:g_stable}, the expression of $\phi$,  and the definition \reff{eq:gf_jumps2} of $g_{[n]}$, we get that 
\begin{align*}
\bE [r^{\xi_1}|\tau_0=t,\xi_0=n,\tau_1=u] 
& =
g_t(t-u,r) \frac{n(b-1)}{nb+b-n}
+\left(1-\frac{\phi((1-r)c(u))}{\phi(c(u))}\right)\frac{b}{nb+b-n}
\\
& =g_B(r)\frac{n(b-1)}{nb+b-n}+g_I(r)\frac{b}{nb+b-n}
\\
&= g_{[n]} (r). 
\end{align*}
Since the  latter expression  does not  depend on  $u$, this  proves the
conditional  independence between  $\xi_1$  and  $\tau_1$. Moreover,  we
indeed   recover  the   expression  of   \eqref{eq:gf_jumps1}  for   the
conditional generating function of $\xi_1$.
\end{proof}

\begin{rem}
\label{rem:indep-js}
Lemma \ref{lem:dist_jumps} implies in particular the independence between the first jumping time $\tau_0$ and the states of $M^0$, i.e. the sequence $(M^0_{-\tau_n},n\ge 0)$.
\end{rem}

\medskip  We  denote,  for  every  $n\ge 0$,  the  scaled  jumping  time
$\tilde\tau_n=R(\tau_n)$     and     the    corresponding   time intervals
$\Delta_n=\tilde\tau_n-\tilde\tau_{n-1}$     with     the     convention
$\tilde\tau_{-1}=0$. The  following lemma gives the  distribution of the
time intervals given the states of the process $\tilde M$.

\begin{lem}\label{lem:dist_Delta}
Conditionally given $(\tilde M_{\tilde \tau_n},n\ge 0)$, the random
variables $(\Delta_n,n\ge 0)$ are independant with for all $u\geq 0$ and $n\geq 0$:
\begin{equation}
   \label{eq:lawD}
\bP \bigl(\Delta_n>u\bigm|(\tilde M_{\tilde \tau_k},k\ge 0)\bigr)=\exp\left(-\left(\tilde M_{\tilde\tau_n}+\frac{b}{b-1}\right)u\right).
\end{equation}
\end{lem}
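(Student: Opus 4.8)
The plan is to reduce the statement to the conditional law of each jump time $\tau_n$ given the past, and to show that after the time change $R$ these laws become memoryless with the announced rates. The Markov structure I need is already in place: by Lemma~\ref{lem:dist_jumps} together with Remark~\ref{rem:indep-js}, the sequence $((M^0_{-\tau_k},\tau_k),k\ge 0)$ is a Markov chain whose one-step transition factorizes, the increment $\xi_k$ being conditionally independent of $\tau_k$ given $\mathcal G_{k-1}$. Hence, once I condition on the full sequence of states $(\tilde M_{\tilde\tau_k})_{k\ge 0}=(M^0_{-\tau_k})_{k\ge 0}$, the only thing left to describe is, for each $n$, the conditional law of $\tau_n$ given $\tau_{n-1}$ and $M^0_{-\tau_{n-1}}$; the conditional independence of the $\Delta_n$ across $n$ will fall out as soon as I check that this law, expressed through $\Delta_n=R(\tau_n)-R(\tau_{n-1})$, is free of $\tau_{n-1}$.

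For $n\ge 1$ I would start from the survival function of $\tau_n$. Since $\tau_n=\tau_n^B\vee\tau_n^I$ with the two conditionally independent, multiplying \eqref{eq:dist_tau_1_I} and \eqref{eq:dist_tau_1_B} (used with $M^0_{-\tau_{n-1}}=k$, legitimate by the branching property) gives, for $0<u<t$,
\[
\bP\bigl(\tau_n<u\bigm|\tau_{n-1}=t,\ M^0_{-\tau_{n-1}}=k\bigr)
=\left(\frac{\expp{\alpha t}c(t)}{\expp{\alpha u}c(u)}\right)^{b}
\left(\frac{\alpha+\gamma c(t)^{b-1}}{\alpha+\gamma c(u)^{b-1}}\right)^{k}.
\]
The decisive observation is that $R$ linearizes both factors: from \eqref{eq:def-c} and \eqref{eq:def-R} one reads off
\[
\expp{\alpha u}c(u)=\kappa\,\expp{R(u)/(b-1)}
\quad\text{and}\quad
\alpha+\gamma c(u)^{b-1}=\alpha\,\expp{R(u)},
\]
so the two ratios above equal $\expp{\frac{b}{b-1}(R(t)-R(u))}$ and $\expp{k(R(t)-R(u))}$. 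Because $R$ is decreasing, $\{\tau_n<u\}=\{\Delta_n>R(u)-R(t)\}$, and the product collapses to
\[
\bP\bigl(\Delta_n>w\bigm|\tau_{n-1}=t,\ M^0_{-\tau_{n-1}}=k\bigr)
=\expp{-\left(k+\frac{b}{b-1}\right)w},\qquad w\ge 0.
\]
The right-hand side depends on neither $t$ nor any earlier datum, only on the level $k=\tilde M_{\tilde\tau_{n-1}}$ that the process occupies throughout $(\tilde\tau_{n-1},\tilde\tau_n)$; this is exactly what produces both the exponential (memoryless) form, with rate $\tilde M_{\tilde\tau_{n-1}}+\frac{b}{b-1}$, and the conditional independence of the $\Delta_n$ given the states.

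It then remains to treat $\Delta_0=\tilde\tau_0=R(\tau_0)$. Here $\tau_0=\tau_0^I$, since the oldest ancestor of the extant population must come from an immigration on the spine; by the Poisson nature of the immigrants (as in the derivation of \eqref{eq:dist_tau_1_I}) the immigrants below level $-s$ that reach level $0$ form a Poisson set of intensity $\phi(c(v))\,dv$, so $\bP(\tau_0<s)=\expp{-\int_s^{\infty}\phi(c(v))\,dv}$. Using $\phi(c(v))=b\gamma c(v)^{b-1}=-\frac{b}{b-1}R'(v)$ and $R(+\infty)=0$ yields $\int_s^{\infty}\phi(c(v))\,dv=\frac{b}{b-1}R(s)$, hence $\bP(\Delta_0>u)=\expp{-\frac{b}{b-1}u}$; since $\tau_0$ is independent of the states by Remark~\ref{rem:indep-js}, this is also its conditional law, matching the formula with the convention $\tilde M_{\tilde\tau_{-1}}=\tilde M_0=0$. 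Assembling the two cases with the factorized Markov structure gives the full conditional law.

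The main obstacle is the middle step: pinning down the two exponentiation identities correctly and, above all, verifying that the resulting survival function is independent of the previous jump time $t$. This independence is the heart of the lemma, because it is what simultaneously delivers the lack of memory and the conditional independence across $n$; everything else is either bookkeeping or a direct appeal to Lemma~\ref{lem:dist_jumps} and the Poisson structure of the immigration.
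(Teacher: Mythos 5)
Your proof is correct and follows essentially the same route as the paper: both multiply \eqref{eq:dist_tau_1_I} and \eqref{eq:dist_tau_1_B}, observe that the time change $R$ turns the product into $\expp{-(k+\frac{b}{b-1})w}$ independently of the previous jump time, and then invoke Lemma~\ref{lem:dist_jumps} plus the Markov/branching property for the conditional independence across $n$. The only (harmless) variation is your treatment of $\Delta_0$ via the Poisson intensity $\phi(c(v))\,dv$ of surviving immigrants, where the paper instead computes $\bP(\tau_0<r)=\bE[\expp{-c(r)Z_{-r}}]=\bu(c(r))$; both give $\expp{-\frac{b}{b-1}R(r)}$.
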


\begin{proof}
Let us first compute the distribution of $\tilde \tau_0=\Delta_0$.
For every $u\ge 0$, we have:
\[
\bar\P \left(\tilde\tau_0>u\bigm|(\tilde M_{\tilde\tau_k},k\ge 0)\right)
 =\bar\P \left(R(\tau_0)>u\bigm| (M_{-\tau_k}^0,k\ge 0)\right)
 =\bar\P \bigl(\tau_0<R^{-1}(u)\bigr),
\]
using the independance between $\tau_0$ and the states of $M^0$, see
Remark \ref{rem:indep-js}, and that
$R$ is non-increasing. 
\medskip

By the branching property, for every $r>0$, conditionally on 
$Z_{-r}$, the random variable $M_{-r}^0$ is distributed under $\bar\P $
according to a Poisson distribution with parameter
$c(r)Z_{-r }$. We get:
\[
\bP \bigl(\tau_0<r)=\bP(M_{-r}^0=0)=\bE\left[\expp{-c(r)Z_{-r}}\right].
\]
Note  that $Z$ is a CBI, so that \reff{Zlaplace01} holds (with $Y$
distributed as $Z$). Thus, using  \reff{Zlaplace02} as well as
\reff{eq:CR-1}, we deduce that: 
\[
\bP \bigl(\tilde \tau_0>u)  =\bu\left(c\left(R^{-1}(u)\right)\right)\\
=\expp{-\frac{bu}{b-1}},
\]
which is the looked after expression since $\tilde M_0=0$.

\medskip
Let us now compute the distribution of $\Delta_1=\tilde \tau_1 - \tilde \tau_0$.
First, using that $\tau_n=\tau_n^I\vee \tau_n^B$ (see \reff{eq:t=min})
and Equations \eqref{eq:dist_tau_1_I} and \eqref{eq:dist_tau_1_B}, we have
\begin{align*}
\bar\P \left(\tilde\tau_1>u\bigm| \tilde \tau_0=t,\tilde M_{\tilde\tau_0}=k\right) & =
\bar\P \left(\tau_1<R^{-1}(u)\bigm| \tau_0=R^{-1}(t),\ M_{-\tau_0}^0=k\right)\\
& = \bar\P \left(\tau_1^I<R^{-1}(u)\bigm| \tau_0=R^{-1}(t),\
  M_{-\tau_0}^0=k\right)\\ 
& \hspace{2cm} \times \bP \left(\tau_1^B<R^{-1}(u)\bigm| \tau_0=R^{-1}(t),\
  M_{-\tau_0}^0=k\right) 
\\
& =\frac{\expp{\alpha bR^{-1}(t)}c\bigl(R^{-1}(t)\bigr)^b}{\expp{\alpha bR^{-1}(u)}c\bigl(R^{-1}(u)\bigr)^b}\frac{\tilde\psi\Bigl(c\bigl(R^{-1}(t)\bigr)\Bigr)^k}{\tilde\psi\Bigl(c\bigl(R^{-1}(u)\bigr)\Bigr)^k}\cdot
\end{align*}
Using the expressions of $R$ and \reff{eq:CR-1}, we have
\[
\tilde\psi\Bigl(c\bigl(R^{-1}(t)\bigr)\Bigr)  =\alpha\expp{ t}
\quad\text{and}\quad 
\expp{\alpha R^{-1}(t)}  =\left(\frac{\expp{t}}{\expp{t}-1}\right)^{\frac{1}{b-1}}.
\]
This and \reff{eq:CR-1} again give:
\[
\bP\left( \Delta_1> u \bigm| \tilde \tau_0=t,\tilde M_{\tilde\tau_0}=k\right)
=\bP \left(\tilde\tau_1>u+t\bigm| \tilde \tau_0=t,\tilde M_{\tilde\tau_0}=k\right)=\expp{-\left(k+\frac{b}{b-1}\right)u}.
\]

By an easy induction, Lemma \ref{lem:dist_jumps} implies that, conditionally given $\mathcal{G}_0$, the random variable $\tau_1$ is independent of the states $(\tilde M_{\tilde \tau_k},k\ge 1)$. Therefore, we get
\[
\bP\left( \Delta_1> u \bigm| \tilde \tau_0=t,(\tilde M_{\tilde\tau_n},n\ge 0)\right)=\expp{-\left(\tilde M_{\tilde\tau_0}+\frac{b}{b-1}\right)u}.
\]

The proof then follows by induction and by the Markov property.
\end{proof}

\begin{proof}[Proof of Theorem \ref{thm:gwi}]
Lemmas \ref{lem:dist_jumps} and \ref{lem:dist_Delta}  imply the Markov
property for the process $\tilde M$, Lemma \ref{lem:dist_Delta} gives
the transition rates and Lemma \ref{lem:dist_jumps} gives the
distribution of the jumps. This and 
\reff{rateX}, \reff{eq:gf_jumps1} and \reff{eq:gf_jumps2} 
give the result. 
\end{proof}

\subsection{Distribution of the sizes of the families of the current population}

Recall   the   forest   $\bar\cf=(h_i,\cf_i)_{i\in  I}$   from   Section
\ref{sec:random-forest}   and   the   process   $Z$   from   Proposition
\ref{prop:Z=cbi}. Let us denote by
\[
I_0=\{i\in I,\ h_i<0\mbox{ and }\ell^{-h_i}(\cf_i)\ne 0\}
\]
the immigrants that have descendants at time 0. We order the set $I_0$
by the date of arrival of the immigrant: $I_0=\{i_k, k\ge 0\}$ 
with $-\tau_0=h_{i_0}<h_{i_1}<h_{i_2}<\cdots<0$.
For every $k\ge 0$, we set $\zeta_k$ the size of the population at time
0 generated by the $k$-th immigrant, that is:
\[
 \zeta_k=\langle
\ell^{-k_{i_k}}(\cf_{i_k}),1\rangle.
\] 
Notice that $\sum_{k=0}^{+\infty}\zeta_k=Z_0$.
\medskip

Let $\{\sigma_t: t\geq0\}$ be a $(b-1)$-stable subordinator:
$\rE[\expp{-x \sigma_t}]=\expp{-tx^{b-1}}$. Recall $\kappa$ defined in
\reff{Zlaplace02}. 

\begin{prop}\label{PRMage}
Consider the sub-critical stable branching mechanism
  with immigration \eqref{eq:def-psi-b-s}. 
The random point measure $\sum_{k\in \N}\delta_{\kappa\zeta_k}(dx)$ is 
 a Poisson point  measure on $[0, \infty)$ with intensity $
 g( x)\, dx$ where for $x>0$:
\beqlb\label{Fung}
g(x)=\frac{b}{x}\rE\left[\expp{-
    \left({x}/{\sigma_1}\right)^{b-1}}\right].
\eeqlb
We also have that for all $\lambda\geq 0$:
\beqlb
\label{gx}
\int_0^{\infty}(1-\expp{-\lambda x})\,g(x)\, dx=\frac{b}{b-1}\, \log
\left(1+\lambda^{b-1}\right)
= -\log \left( \bu (\kappa \lambda )\right). 
\eeqlb
\end{prop}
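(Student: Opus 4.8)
The plan is to read off the law of the family $(\kappa\zeta_k)_{k\in\N}$ from the Poisson structure of the leveled forest $\bar\cf$, to identify the intensity through the total population $Z_0$, and finally to check that the explicit density \reff{Fung} produces the correct Laplace functional. First I would record that $\bar\cf=(h_i,\cf_i)_{i\in I}$ is, by construction, a Poisson point measure on $\R\times\F$ with intensity $\nu(dh,d\bff)$, and that for $h<0$ the map $(h,\bff)\mapsto \kappa\langle\ell^{-h}(\bff),1\rangle$ is a measurable function of a single atom. Restricting $\bar\cf$ to the atoms with $h<0$ and $\langle\ell^{-h}(\bff),1\rangle>0$ (these are exactly the immigrants of $I_0$) and pushing forward by this map, the mapping theorem for Poisson point measures shows that $\sum_{k\in\N}\delta_{\kappa\zeta_k}$ is a Poisson point measure on $(0,\infty)$; write $g(x)\,dx$ for its (to be determined) intensity. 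One should note that the intensity turns out to behave like $b/x$ near the origin, so there are infinitely many small atoms, but since $\sum_k\zeta_k=Z_0<\infty$ $\bP$-a.s. the pushforward is a genuine Radon measure on $(0,\infty)$ and the construction is consistent.

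Next I would identify the intensity through its Laplace functional. By the exponential formula for Poisson point measures, $\bE\left[\exp\left(-\lambda\sum_k\kappa\zeta_k\right)\right]=\exp\left(-\int_0^\infty(1-\expp{-\lambda x})g(x)\,dx\right)$. Since $\sum_k\kappa\zeta_k=\kappa Z_0$ and $Z$ is the stationary CBI of Proposition \ref{prop:Z=cbi}, so that $\bE[\expp{-\mu Z_0}]=\bu(\mu)$ by \reff{Zlaplace01}, we obtain $\int_0^\infty(1-\expp{-\lambda x})g(x)\,dx=-\log\bu(\kappa\lambda)$. Plugging the explicit $\bu$ from \reff{Zlaplace02} and using $\kappa^{b-1}=\alpha/\gamma$ gives $\bu(\kappa\lambda)=(1+\lambda^{b-1})^{-b/(b-1)}$, hence $-\log\bu(\kappa\lambda)=\frac{b}{b-1}\log(1+\lambda^{b-1})$. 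This already yields the second equality in \reff{gx}.

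It then remains to verify that the density $g$ of \reff{Fung} produces this same functional, which identifies it by injectivity of the Laplace transform (two intensities with equal $\int_0^\infty(1-\expp{-\lambda x})\,dm$ for all $\lambda$ coincide, since this determines the Laplace transform of $x\,m(dx)$). I would differentiate in $\lambda$: $\int_0^\infty x\expp{-\lambda x}g(x)\,dx=b\,\rE\left[\int_0^\infty \expp{-\lambda x-(x/\sigma_1)^{b-1}}\,dx\right]$. Writing the subordinator identity as $\expp{-(x/\sigma_1)^{b-1}}=\rE[\expp{-x\sigma_1'/\sigma_1}\mid\sigma_1]$ for an independent copy $\sigma_1'$, Fubini reduces the inner integral to $\rE\left[\sigma_1/(\lambda\sigma_1+\sigma_1')\right]=\int_0^\infty\rE[\sigma_1\expp{-\lambda u\sigma_1}]\,\rE[\expp{-u\sigma_1'}]\,du$; using $\rE[\expp{-u\sigma_1}]=\expp{-u^{b-1}}$ and the change of variable $v=u^{b-1}$ evaluates this to $\lambda^{b-2}/(1+\lambda^{b-1})$. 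Thus the derivative equals $b\lambda^{b-2}/(1+\lambda^{b-1})$, the $\lambda$-derivative of $\frac{b}{b-1}\log(1+\lambda^{b-1})$; integrating from $0$ (both sides vanish at $\lambda=0$) gives the first equality in \reff{gx} and hence \reff{Fung}.

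The hard part will be the rigorous justification of Step 1: that marking and mapping $\bar\cf$ really produces a Poisson point measure on $(0,\infty)$ despite the non-integrable accumulation of small atoms near the origin, and that the image intensity is locally finite away from $0$. Everything else is either a direct substitution of the explicit formulas \reff{Zlaplace02} or the stable-subordinator computation, where the only care needed is in the applications of Fubini's theorem and the interchange of expectation, integration and $\lambda$-differentiation.
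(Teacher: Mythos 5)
Your proof is correct, and it takes a genuinely different route from the paper. The paper never uses the Poisson structure of the underlying leveled forest directly: it goes through the time-changed GWI representation of Theorem \ref{thm:gwi}, decomposes $X$ into independent GW trees grafted at the Poisson immigration times $T_i$, invokes the Athreya--Ney limit theorem to compute the Laplace transform of each martingale limit $W_i$ (via an explicit computation of the function $\varphi$), deduces from Corollary \ref{cor:martlim} the joint identity $(\kappa\zeta_i, i\in\N)\overset{d}{=}(\expp{-T_i/(b-1)}W_i, i\in\N)$ in arrival order, represents $W_i$ as $(E^i)^{1/(b-1)}\sigma_1^i$, and only then performs Poisson-measure manipulations to extract $g$. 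Your argument short-circuits all of this: since each $\zeta_k=\langle\ell^{-h_{i_k}}(\cf_{i_k}),1\rangle$ is a measurable function of a single atom of the Poisson point measure $\bar\cf$, the restriction and mapping theorems give Poissonianity of $\sum_k\delta_{\kappa\zeta_k}$ at once (and the $\sigma$-finiteness worry you flag in Step 1 is easily dispatched: $\int(1-\expp{-x})m(dx)=-\log\bu(\kappa)<\infty$ forces $m([\varepsilon,\infty))<\infty$ for every $\varepsilon>0$); the intensity is then pinned down by the known Laplace transform $\bE[\expp{-\mu Z_0}]=\bu(\mu)$ together with the injectivity argument, and your subordinator computation verifying that the candidate density \reff{Fung} has Lévy exponent $\frac{b}{b-1}\log(1+\lambda^{b-1})$ is correct. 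What your route buys is brevity and independence from the GWI machinery (Theorem \ref{thm:gwi} and Corollary \ref{cor:martlim} are not needed at all); what the paper's route buys is the stronger ordered identity \eqref{eqnlaw} and the explicit subordinator/Gamma-subordinator representation of the $W_i$, which are reused afterwards in Remark \ref{RemSub}, Remark \ref{rem:rep-quad} and Proposition \ref{PRMageT}, so your proof establishes the proposition but not all of the byproducts the paper later relies on.
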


\begin{proof}
  Recall    the    GWI process   $X$   from    Theorem    \ref{thm:gwi}.    Let
  $\{0=T_0<  T_1<T_2<\cdots\}$ be  the  immigration times  of $X$  which
  forms a  Poisson process with  rate $b/(b-1)$. Recall $g_B$  and $g_I$
  defined    in    \reff{eq:g_stable}    and    \reff{eq:def-gi}.    Let
  $\{X^i,  i\geq0\}$  be  a  sequence  of  independent  continuous  time
  Galton-Watson  processes  with  branching   rate  $1$  such  that  the
  offspring  law  has generating function  $g_B$  and  the law of
  $X^i_0$  has generating function 
  $g_I$. Then for $t\geq0$, we have:
\[
X_t=\sum_{T_i\leq t}X^i_{t-T_i}\quad\text{and}\quad
 W=\sum_{i\geq0}\expp{-T_i/{(b-1)}}W_i,
\]
where
$W_i\overset{a.s.}{=}\lim_{t\rightarrow\infty}\expp{-\frac{t}{b-1}}X^i_t$
so that $\{W_i: i\geq0\}$ are independent random variables with the same
distribution. 
By Theorem 3 on Page 116 of \cite{AN72}, we get that  for $x\geq0$,
$\rE[\expp{-x W_i}]=g_I(\varphi(x)) $, where $\varphi$ is a one-to-one
map form $[0, \infty )$ to $(0, 1]$ such that for $x\in (0, 1]$:
\[
\varphi^{-1}(x)=
(1-x)\exp\left\{\int_1^{x}\left(\frac{g_B'(1)-1}
    {g_B(r)-r}+\frac{1}{1-r}\right)dr\right\}.   
\]
We get that for $x\in (0, 1]$:
\[
\varphi^{-1}(x)=
(1-x)\exp\left\{\int_0^{1-x}\frac{u^{b-2}}{1-u^{b-1}}du\right\}
=\left(\frac{(1-x)^{b-1}}{1-(1-x)^{b-1}}\right)^{1/(b-1)}.
\]
This gives that for $x\geq 0$:
\[
\varphi(x)=1-\left(\frac{x^{b-1}}{1+x^{b-1}}\right)^{1/(b-1)}.
\]
We then deduce that:
\begin{equation}
   \label{eq:LaplaceWi}
\rE[\expp{-x W_i}]=g_I(\varphi(x))=1-(1-\varphi(x))^{b-1}=\frac{1}{1+x^{b-1}}\cdot
\end{equation}
This, Theorem \ref{thm:gwi} and Corollary \ref{cor:martlim}  imply that:
\begin{equation}
  \label{eqnlaw}
\left( \kappa \zeta_i ,
  i\in \N\right)\overset{d}{=}\left(\expp{-\frac{T_i}{(b-1)}}W_i,
  i\in \N\right). 
\end{equation}

Let $\{(\sigma_s^i, s\geq0), i\in \N\}$ be a sequence of independent
$(b-1)$-stable subordinators and $\{E^i,  i\in \N\}$ be a sequence of
independent exponentially distributed random variables with parameter
1. Then it is easy to see from \reff{eq:LaplaceWi} that 
\[
(W_i, i\in \N)\overset{d}{=} (\sigma_{E^i}^i, i\in \N)\overset{d}{=}
\left((E^i)^{\frac{1}{b-1}}\sigma_{1}^i, i\in \N\right),
\]
where the last equality follows  from scale invariant property of stable
subordinator.  Thus  we have: 
\begin{equation}
   \label{eqnlaw01}
\left(  \kappa \zeta_i,
  i\in \N\right)\overset{d}{=}\left(\expp{-\frac{T_i}{(b-1)}}W_i        ,
  i\in \N\right)\overset{d}{=}
\left(\expp{-\frac{T_i}{(b-1)}}(E^i)^{\frac{1}{b-1}}\sigma_{1}^i       ,
  i\in \N\right).  
\end{equation}
   On   the    other   hand,    notice   that
$\sum_{i}\delta_{T_i}(dt)\delta_{E^i}(dx)$ is  a Poisson  random measure
on $[0,\infty)^2$ with  intensity $\frac{b}{b-1}dt\, \expp{-x}dx$.  Thus
$\sum_{i}\delta_{\{\expp{-T_i}E^i \}}(ds)$  is a Poisson  random measure
on $[0,\infty)$ with intensity $\frac{b}{b-1}s^{-1}\expp{-s}ds$. Indeed,
for any bounded  positive measurable function $f$ on  $[0, \infty)$, one
has
\begin{align*}
\rE\left[\expp{-\sum_{i} f(\expp{-T_iE^i})}\right]
&=\exp\left\{-\int_{0}^{\infty}\int_{0}^{\infty}(1-\expp{-f(\expp{-t}x)})
  \frac{b}{b-1}dt \, \expp{-x}dx\right\}\\
&= \exp\left\{-\int_{0}^{\infty}\int_{0}^{\infty}(1-\expp{-f(s)})
  \frac{b}{b-1}dt \expp{t} \expp{-s\expp{t}}ds\right\}\\ 
&= \exp\left\{-\int_{0}^{\infty}(1-\expp{-f(s)})
  \frac{b}{b-1}s^{-1}\expp{-s}ds\right\}, 
\end{align*}
where the last equality follows from $ \int_{0}^{\infty}
\expp{t-s\expp{t}}dt=\int_{1}^{\infty}  \expp{-st}dt=s^{-1}\expp{-s}$. 
 Hence, we deduce that
\[
F(ds\,dx):=\sum_{i}\delta_{\{ \expp{-T_i}E^i \}}(ds)\delta_{\sigma_1^i}(dx)
\]
 is a Poisson point  measure on $[0,\infty)^2$ with intensity
 $\frac{b}{b-1} s^{-1} \expp{-s} ds \, \rP(\sigma_1\in dx)$. Define
$$
G(ds)=\sum_i
\delta_{\{\expp{-\frac{T_i}{b-1}}(E^i)^{\frac{1}{b-1}}\sigma_{1}^i
  \}}(ds). 
$$
We shall prove that $G$ is a Poisson point measure on $[0,\infty)$ with
intensity $g(s)ds$. We only need to identify  the intensity measure. For
any positive measurable function $f$ on $[0,\infty)$, we have:
\begin{align*}
 \log \rE\left[\exp\left\{-\int_0^{\infty} f(s)G(ds)\right\}\right]
&=
\log \rE\left[\exp\left\{-\int_{[0,\infty)^2} f( s^{1/(b-1)}x) \,
  F(dsdx)\right\}\right]\\ 
&= -\int_{[0,\infty)^2}\left(1-\expp{-f( s^{1/(b-1)}x)}\right)
  \frac{b}{b-1}\expp{-s} \frac{ds}{s}\, \rP(\sigma_1\in dx)\\
&=-\int_{[0,\infty)^2}\left(1-\expp{-f(t)}\right)
  \frac{b}{t}\expp{-(t/x)^{b-1}}dt\, \rP(\sigma_1\in dx)\\
&=-\int_{[0,\infty)}(1-\expp{-f(t)})g(t)\,dt.
\end{align*}
Then the desired result follows. We now prove the last part of the
proposition. We have:
\begin{align*}
\int_{[0,\infty)}(1-\expp{-\lambda t})\,g(t)\,dt
&= \frac{b}{b-1}\int_0^{\infty}s^{-1}\expp{-s}ds \int_0^{\infty}\rP(\sigma_1\in
  dx)(1-\expp{-\lambda s^{1/(b-1)}x})\\
&=\frac{b}{b-1}\int_0^{\infty} ds \, s^{-1}\expp{-s}\left(1-\expp{-s
  \lambda^{b-1}}\right)\\
&= \frac{b}{b-1}\log \left(1+\lambda^{b-1}\right).
\end{align*}
\end{proof}

\begin{rem}\label{RemSub}
From the proof of Proposition \ref{PRMage}, we have
\[
\left(\sum_{i=0}^{k}\expp{-\frac{T_i}{b-1}}W_i, k\in \N\right)\overset{d}{=}
\left(\sum_{i=0}^k
\expp{-\frac{T_i}{b-1}}(E^i)^{\frac{1}{b-1}}\sigma_{1}^i,
k\in \N\right)\overset{d}{=} \left(\sigma_{S_k}, k\in \N\right),  
\]
where $S_k=\sum_{i=0}^k \expp{-T_i}E^i$. 
Notice that  $(S_k, k\geq 0)$ is
independent of $(\sigma_s, s\geq0)$. 
Since $\sum_{i}\delta_{\{ \expp{-T_i}E^i \}}(ds)$
 is a Poisson point  measure on $[0,\infty)$ with intensity
 $\frac{b}{b-1}\,  s^{-1} \expp{-s} \, ds$, we get that 
 $
\{\expp{-T_i}E^i,  i\in \N \}$ are the jump sizes of  a Gamma subordinator
$\left(\Gamma_t, \, t\in [0,\frac{b}{b-1}]\right)$ with L\'evy
measure ${s}^{-1}\expp{-s}\, ds$. And we recover that $S_\infty $ is
distributed as $\Gamma_{\frac{b}{b-1}}$ and is thus
$\Gamma\left(\frac{b}{b-1}, 1\right)$-distributed (see also Remark
\ref{RemLinnik}). 
 Therefore, we get that
 $\{\kappa \zeta_i, i\in \N\}$ are the jump sizes  of $\{\sigma_{\Gamma_t},
 0\leq t\leq \frac{b}{b-1}\}$. This induces a Poisson-Kingman partition;
 see \cite{P03}. 

\end{rem}

The distribution of $(\zeta_k, k\geq 0)$ is related to the
Poisson-Dirichlet distribution in the quadratic case. Recall that $Z_0=
\sum_{k\in \N} \zeta_k$.  
\begin{cor}
\label{propPD} 
Consider the sub-critical quadratic  branching mechanism   with immigration 
\eqref{eq:def-psi-b} with $b=2$. 
  Let $(\zeta_{(k)}, k\in \N)$  be  the
decreasing  order statistics of $(\zeta_k, k\in \N)$. Then, the
random sequence $\left( \zeta_{(k)}/Z_0 , k\in \N\right)
$ has a Poisson-Dirichlet distribution with parameter $2$.
\end{cor}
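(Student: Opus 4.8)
The plan is to specialize Proposition~\ref{PRMage} to the quadratic case $b=2$, recognize the resulting Poisson point measure as the jumps of a Gamma subordinator, and then invoke Kingman's representation of the Poisson--Dirichlet distribution via normalized Gamma jumps.

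First I would set $b=2$ in the intensity formula \reff{Fung}. Since the subordinator $\sigma$ is $(b-1)$-stable with $b-1=1$, its Laplace exponent is linear, $\rE[\expp{-x\sigma_t}]=\expp{-tx}$, so $\sigma$ is deterministic with $\sigma_t=t$, and in particular $\sigma_1=1$ almost surely. Substituting into \reff{Fung} then gives $g(x)=\frac{2}{x}\expp{-x}$ for $x>0$. As a consistency check, \reff{gx} reduces to $\int_0^{\infty}(1-\expp{-\lambda x})\,g(x)\,dx=2\log(1+\lambda)$, which is exactly the Laplace exponent of a Gamma subordinator run for time $2$.

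Next I would identify $g(x)\,dx=2\,\frac{\expp{-x}}{x}\,dx$ as twice the L\'evy measure $\frac{\expp{-x}}{x}\,dx$ of the standard Gamma subordinator $(\Gamma_t,t\ge 0)$. Hence, by Proposition~\ref{PRMage}, the family $\{\kappa\zeta_k,\ k\in\N\}$ is distributed as the set of jumps of $(\Gamma_t,\ 0\le t\le 2)$; this is precisely the degeneration of the Poisson--Kingman picture of Remark~\ref{RemSub}, in which $\sigma_{\Gamma_t}=\Gamma_t$ because $\sigma$ is now the identity. In particular $\kappa Z_0=\sum_k\kappa\zeta_k$ is distributed as $\Gamma_2\sim\Gamma(2,1)$, which is finite and strictly positive almost surely, so the normalization below is well defined.

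Finally I would apply the classical result of Kingman: the ranked normalized jumps of the Gamma subordinator on $[0,\theta]$ follow the Poisson--Dirichlet distribution with parameter $\theta$. Taking $\theta=2$ and noting that the constant $\kappa$ cancels in the ratio, $\kappa\zeta_{(k)}/(\kappa Z_0)=\zeta_{(k)}/Z_0$, we conclude that $(\zeta_{(k)}/Z_0,\ k\in\N)$ has the Poisson--Dirichlet distribution with parameter $2$. There is no serious obstacle here: the only points requiring care are the correct bookkeeping of the degenerate $1$-stable (deterministic) subordinator and a precise citation of Kingman's representation of the Poisson--Dirichlet law through normalized Gamma jumps; everything else is a direct substitution into Proposition~\ref{PRMage} and Remark~\ref{RemSub}.
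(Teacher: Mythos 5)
Your argument is correct and follows essentially the same route as the paper: specialize Proposition \ref{PRMage} to $b=2$ so that the $1$-stable subordinator degenerates to the identity, identify $\{\kappa\zeta_k\}$ with the jumps of a Gamma subordinator over $[0,2]$ (as in Remark \ref{RemSub}), and invoke the Kingman/Pitman--Yor representation of the Poisson--Dirichlet law via normalized ranked Gamma jumps. The paper's proof is just a terser version of exactly this.
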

\begin{proof}
When $b=2$, we have   $\sigma(t)=t$. Then $\{\kappa \zeta_k, k\in \N\}$
are jump sizes  of $\{{\Gamma_t}: 0\leq t\leq 2\}$. The result follows
from Proposition 5 in \cite{PY97}, see also \cite{K75}. 
\end{proof}

\begin{rem}
\label{rem:rep-quad}
  Assume  that $\psi(\lambda)=\alpha  \lambda+\gamma\lambda^2$. According
  to (\ref{eqnlaw})  above and Theorem  2.21 in \cite{F10}, we  have the
  following   so-called  GEM   representation:
the sequence  $\left( \zeta_k/Z_0 , k\in \N\right)$ is distributed as 
\begin{equation}
   \label{eqnlawstick}
\left(U_0,    (1-U_0)U_1,   \cdots,
    (1-U_0)\cdots(1-U_{k-1})U_{k},\cdots,   \right), 
\end{equation}
  where
  $\{U_i,  i\geq0\}$  are independent random variable with the same  Beta-$(1,  2)$  distribution;  see
  \cite{F10} and  references therein.  Moreover,  Corollary \ref{propPD}
  above  and  Theorem  2.7  in  \cite{F10}  give  that  the  size-biased
  permutation                                                         of
  $  \left(  \zeta_{(k)}/Z_0, k\in \N\right)  $
  also   has  the   same   law   as  the   family   of  age-ordered   in
  (\ref{eqnlawstick}). 
\end{rem}

When $b\in (1,2)$, it does not seem possible to get a result similar to
Corollary \ref{propPD} or Remark \ref{rem:rep-quad}, see
Remark \ref{rem:notB} below. 
\medskip

We consider the size-biased sample $V$  of $\left(\zeta_k/Z_0,
  k\in \N\right)$ under $\bP$. Let $K$ be a $\N$-valued random variable
such that,  conditionally on $\left(\zeta_k/Z_0,
  k\in \N\right)$,  $K$ is equal to $k$ with probability
$\zeta_k/Z_0$. Then, $V$  is distributed as $\zeta_K/Z_0$ under
$\bP$:
\[
\rP(V\in dx)=\sum_{k\geq 0}  x\, 
\bP(\zeta_k/Z_0 \in dx).
\] 
We shall also consider  the size-biased sample $\zeta^*$  of
$\left(\zeta_k, 
  k\in \N\right)$, which is  distributed as $\zeta_K$. 

Recall that $f_{b-1, b}$ defined in \reff{eq:def-fbb} is the density of
$\kappa Z_0$. 
Then with Proposition \ref{PRMage} and Remark \ref{RemSub} in hand,
Theorem 2.1 of \cite{PPY92} implies that the distribution of $V$ and
$\kappa \zeta^*$ have densities  given by: 
\[
f_V(x) =  x \int_0^{\infty}{ t g( xt)}f_{b-1,
  b}((1-x)t)\,  dt\quad\text{for $x\in (0, 1)$},
\]
  and
\[
f_{\kappa \zeta^*} (x)= x {g( x)}  \int_x^{\infty}
f_{b-1,b}(t-x)\, \frac{dt}{t} \quad \text{for $x>0$}.
\]
   See also (25) and (19) in Section 3 of \cite{P03}. In the following
   proposition we characterize the law of $\zeta^*$ via its Laplace
   transform and compute the moments of $V$. Recall $\bu$ from
   \reff{Zlaplace02}. We set:
\[
G=-\frac{\bu'}{\bu}\cdot
\]

\begin{prop} \label{prop:moments}
Consider the sub-critical stable branching mechanism   with immigration
\eqref{eq:def-psi-b-s}. We 
have for $\lambda\geq 0$,
\begin{equation}
\label{YL}
\rE[\expp{-\lambda \zeta^*}]=\int_0^{\infty}G(\lambda+\mu) \bu (\mu)\, d\mu,
\end{equation}
and for $n\geq 1$:
\begin{equation}
\label{XN}
\rE[V^n]=\int_0^{\infty} v_n(t)\, dt
\quad\text{with}\quad v_n(t) =(-1)^n \frac{t^n}{n!}\,
{G^{(n)}(t)}\bu(t). 
\end{equation}
\end{prop}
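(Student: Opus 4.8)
The plan is to start from the two density formulas for $\kappa\zeta^*$ and $V$ recorded just above the statement (themselves obtained from Theorem~2.1 of \cite{PPY92}), turn the relevant integrals into Laplace transforms, and identify everything through the single identity \reff{gx} and its $\lambda$-derivatives. The engine of both computations is that, differentiating $\int_0^\infty(1-\expp{-\lambda x})g(x)\,dx=-\log\bu(\kappa\lambda)$ from \reff{gx} with respect to $\lambda$, one obtains for $\lambda>0$
\[
\int_0^\infty x\,\expp{-\lambda x}\,g(x)\,dx=-\frac{d}{d\lambda}\log\bu(\kappa\lambda)=\kappa\,G(\kappa\lambda),
\]
and then, differentiating $n$ further times (justified by dominated convergence on $\lambda\geq\lambda_0>0$, the factor $\expp{-\lambda x}$ guaranteeing integrability at $+\infty$),
\[
\int_0^\infty x^{n+1}\,\expp{-\lambda x}\,g(x)\,dx=(-1)^n\kappa^{n+1}\,G^{(n)}(\kappa\lambda).
\]
I shall also use throughout that $f_{b-1,b}$ is the density of $\kappa Z_0$, so that $\int_0^\infty\expp{-uw}f_{b-1,b}(w)\,dw=\bu(\kappa u)$ by \reff{Zlaplace01} and \reff{Zlaplace02}.

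For \reff{YL}, I compute $\rE[\expp{-s\kappa\zeta^*}]$ from the density $f_{\kappa\zeta^*}$. Writing $1/t=\int_0^\infty\expp{-ut}\,du$ and substituting $w=t-x$ first converts the inner integral of the density into
\[
\int_x^\infty f_{b-1,b}(t-x)\,\frac{dt}{t}=\int_0^\infty\expp{-ux}\,\bu(\kappa u)\,du.
\]
Plugging this in and interchanging integrals gives
\[
\rE[\expp{-s\kappa\zeta^*}]=\int_0^\infty\bu(\kappa u)\left(\int_0^\infty x\,\expp{-(s+u)x}g(x)\,dx\right)du=\int_0^\infty\bu(\kappa u)\,\kappa\,G\bigl(\kappa(s+u)\bigr)\,du,
\]
and the substitution $\mu=\kappa u$ together with $\lambda=\kappa s$ yields exactly \reff{YL}, since $\rE[\expp{-s\kappa\zeta^*}]=\rE[\expp{-\lambda\zeta^*}]$ when $\lambda=\kappa s$.

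For \reff{XN}, I start from $f_V$ and the substitution $y=xt$ in the inner integral, which gives
\[
\rE[V^n]=\int_0^\infty\frac{1}{t^{n+1}}\,h_n(t)\,dt,\qquad h_n(t)=\int_0^t x^{n+1}g(x)\,f_{b-1,b}(t-x)\,dx.
\]
The function $h_n$ is a convolution, so its Laplace transform factorizes, and by the identities above $\widehat h_n(s)=(-1)^n\kappa^{n+1}G^{(n)}(\kappa s)\,\bu(\kappa s)$. The decisive step is to write $t^{-(n+1)}=\tfrac{1}{n!}\int_0^\infty s^n\expp{-st}\,ds$; inserting this and interchanging integrals turns $\rE[V^n]$ into $\tfrac{1}{n!}\int_0^\infty s^n\,\widehat h_n(s)\,ds$, and after the substitution $t=\kappa s$ this is precisely $\int_0^\infty v_n(t)\,dt$ with $v_n(t)=(-1)^n\tfrac{t^n}{n!}G^{(n)}(t)\bu(t)$.

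The only routine pieces are the two substitutions and the Gamma integral $\int_0^\infty s^n\expp{-st}\,ds=n!/t^{n+1}$. Because every integrand appearing here is nonnegative, all interchanges are legitimate by Tonelli's theorem, and finiteness is automatic from $V\in(0,1)$ (so $\rE[V^n]\leq1$) and $\expp{-s\kappa\zeta^*}\leq1$; in particular one never needs $x^{n+1}g(x)$ to be integrable on all of $(0,\infty)$, which in fact it is not for large $n$. The main conceptual obstacle is recognizing the $t^{-(n+1)}$ representation, which is what converts the a priori awkward integral $\int_0^\infty t^{-(n+1)}h_n(t)\,dt$ into a Laplace transform that the convolution structure then makes explicit; a secondary point requiring care is the differentiation of \reff{gx} under the integral sign, where one should note that $G^{(n)}(t)$ blows up like $t^{b-2-n}$ as $t\downarrow0$ while $t^n\,G^{(n)}(t)\bu(t)$ stays integrable there thanks to $b>1$.
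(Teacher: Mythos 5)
Your argument is correct, but it follows a genuinely different route from the paper's. The paper never touches the density formulas $f_V$ and $f_{\kappa\zeta^*}$: it works directly with the Poisson point measure of Proposition \ref{PRMage}, computing
$\bE\bigl[\sum_{i}\zeta_i\expp{-\lambda \zeta_i-\mu Z_0}\bigr]=\bu(\mu)\,G(\lambda+\mu)$
by differentiating the Laplace functional $\partial_\lambda\partial_\rho\,\bE[\expp{-\rho\sum_i\expp{-\lambda\zeta_i}-\mu Z_0}]_{|\rho=0}$, and then removes the factors $1/Z_0$ and $1/Z_0^{n+1}$ via $1/Z_0=\int_0^\infty\expp{-\mu Z_0}\,d\mu$ and $1/Z_0^{n+1}=\frac{1}{n!}\int_0^\infty t^n\expp{-tZ_0}\,dt$ (written there as an iterated integral over $0<t_1<\cdots<t_n<t$). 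You instead take as input the two density formulas quoted just before the statement (which the paper imports from Theorem 2.1 of \cite{PPY92}) and reduce everything to Laplace transforms of $x^{n+1}g(x)$ and $f_{b-1,b}$, using the convolution structure of $h_n$ and the representation $t^{-(n+1)}=\frac{1}{n!}\int_0^\infty s^n\expp{-st}\,ds$. Both proofs ultimately rest on the same identity \reff{gx} and its $\lambda$-derivatives, and your Tonelli justifications and the remark that $x^{n+1}g(x)$ is not integrable at infinity without the exponential factor are sound. What the paper's route buys is self-containedness: it does not depend on the correctness of the \cite{PPY92} density formulas, and in fact gives an independent consistency check of them. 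What your route buys is that it makes the link between the stated densities and the stated moments/Laplace transform completely explicit, at the price of a logical dependence on the external reference. One minor presentational point: since $(-1)^nG^{(n)}(\kappa\lambda)=\kappa^{-(n+1)}\int_0^\infty x^{n+1}\expp{-\lambda x}g(x)\,dx\ge 0$, all your integrands are indeed nonnegative, so the Tonelli appeals are exactly as clean as you claim.
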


\begin{proof} First,  by property of Poisson point  measure and
  \reff{gx}, we get:
\begin{align*}
   \bE\left[\sum_{i=0}^{+\infty}\zeta_i\expp{-\lambda \zeta_i-\mu Z_0}\right] 
&= \partial_\lambda  
\left(
\partial_\rho \, 
\bE\left[\expp{-\rho\sum_{i=0}^{+\infty}\expp{-\lambda \zeta_i}-\mu
    Z_0}\right]\right)_{\mid{\rho=0}} \\
&= - \exp\left\{-\int_0^{\infty}(1-\expp{-\mu x})\kappa g(\kappa x)\, dx\right\}
\partial_\lambda 
\int_0^{\infty}
\expp{-(\mu+\lambda) x}\kappa g(\kappa x)\,dx \\
&= \exp\left\{-\int_0^{\infty}(1-\expp{-\mu x})\kappa g(\kappa x)\, dx\right\}
 \int_0^{\infty}
x \expp{-(\mu+\lambda) x}\kappa g(\kappa x)\,dx \\
&= \bu(\mu) G(\lambda+\mu).
\end{align*}
Then \reff{YL} follows from
\[
\rE[\expp{-\lambda
  \zeta^*}]=\bE\left[\sum_{i=0}^{+\infty}\frac{\zeta_i}{Z_0}\expp{-\lambda 
    \zeta_i}\right]=\int_0^{\infty}d\mu
\, \bE\left[\sum_{i=0}^{+\infty}\zeta_i \expp{-\lambda \zeta_i-\mu
    Z_0}\right].  
\]
Next, observe that for $n\geq1$,
\[
    \bE\left[\sum_{i=0}^{+\infty}\zeta_i^ n \expp{-\mu
    Z_0}\right]
=  (-1)^{n-1}\left(
\partial^{n-1}_\lambda\,  
\bE\left[\sum_{i=0}^{+\infty}\zeta_i \expp{-\lambda \zeta_i-\mu
    Z_0}\right]\right)_{\mid{\lambda=0}}
= (-1)^{n-1}G^{(n-1)}(\mu) \bu(\mu). 
\]
We deduce that:
\begin{align*}
   \rE[V^n]
&=  \bE\left[\sum_{i=0}^{+\infty}
  \left(\frac{\zeta_i}{Z_0}\right)^{n+1}\right]   \\
&=  \int_{(0, +\infty )^n} dt_1 \ldots dt_n \, \ind_{\{0<t_1<t_2 \cdots
  <t_n\}} 
\int_{t_n}^{\infty}(-1)^{n}G^{(n)}(t) \bu (t)\, dt\\
&=  \int_0^{\infty}(-1)^n \frac{t^n}{n!} G^{(n)}(t)\, \bu (t)\, dt.
\end{align*}
This finishes the proof. 
\end{proof}

\begin{rem}
  The  moment  of $V$  in  \reff{XN}  can  be computed  explicitly.  Set
  $\eta=b-1$  and  $a=\gamma/\alpha$. Recall from 
\reff{Zlaplace02} that $
  \bu(t)=(1+ a t^\eta)^{-(\inv{\eta}+1)}$.  As $G^{(n-1)}(t)$  is  a  linear
  combination of  functions $ t^{k\eta  - n} (  1+ a t^\eta)^{-k}  $ for
  $k\in \{ 1, \ldots, n\}$, one gets that for $n\geq 1$:
\[
\lim_{t\rar0+} {t^n G^{(n-1)}(t)}\,\bu(t)=\lim_{t\rar+\infty}{t^n
  G^{(n-1)}(t)}\,  \bu(t)=0.
\]
Recall $v_n$ defined in \reff{XN}, so that  for $n\geq 0$ and $k\geq0$:
\[
\frac{v_n(t)}{(1+at^{\eta})^k}= 
(-1)^n  \frac{t^n}{n!} \frac{G^{(n)} (t)}{(1+ a t^\eta)^{k+1+1/\eta}}\cdot
\]
Then, for $n\geq1$ and $k\geq0$, by integration by parts, we have
\begin{multline}
\label{eq:rec-vn}
\int_0^{\infty}\frac{v_n(t)}{(1+at^{\eta})^k}\, dt\\
=
\left(1-\frac{\eta(k+1)+1}{n}\right)
\int_0^{\infty}\frac{v_{n-1}(t)}{(1+at^{\eta})^k}\, dt
+\frac{\eta(k+1)+1}{n} \int_0^{\infty}\frac{v_{n-1}(t)}{(1+at^{\eta})^{k+1}}dt, 
\end{multline}
and
\[
\int_0^{\infty}\frac{v_0(t)}{(1+at^{\eta})^k}\,dt
= \int_0^{\infty}\frac{(-\bu'(t))}{(1+at^{\eta})^k}\, dt
=\int_0^{\infty}\frac{a(1+\frac{1}{\eta})\eta t^{\eta-1}}{
  (1+at^{\eta})^{k+2+\frac{1}{\eta}}}dt
=\frac{\eta+1}{\eta(k+1)+1}\cdot
\]
The previous recursion formula gives the value of $\int_0^{\infty}
v_n(t)(1+at^{\eta})^{-k}\, dt$ for all $n\geq 0$ and $k\geq 0$. Using
\reff{eq:rec-vn} with $k=0$, we get: 
\begin{equation}
   \label{eq:XN1}
\rE[V^n]
=\left(1-\frac{(\eta+1)}{n}\right)\int_0^{\infty}v_{n-1}(t)\, dt
+ \frac{\eta+1}{n}\int_0^{\infty}\frac{v_{n-1}(t)}{1+at^{\eta}}\, dt.
\end{equation}
 In particular, one has:
\begin{align*}
\rE[V]
&=-\eta+ (\eta+1)\frac{\eta+1}{2\eta+1}=\frac{-{\eta}^2+{\eta}+1}{2{\eta}+1},\\
\rE[V^2]
&= (1-\frac{\eta+1}{2})E[X]-\frac{\eta(\eta+1)^2}{2\eta+1}
+\frac{(\eta+1)^2(2\eta+1)}{2(3\eta+1)}\\
&=  \frac{\eta^4-7\eta^3+\eta^2+7\eta+2}{2(2\eta+1)(3\eta+1)},\\
\rE[V^3]
&= \frac{23\eta^5-80\eta^4-30\eta^3+74\eta^2+43\eta+6}
{6(2\eta+1)(3\eta+1)(4\eta+1)}\cdot
\end{align*}
\end{rem}

\begin{rem}
\label{rem:notB}
  Based on  the moment  formulas above,  one can check  that $V$  is not
  Beta-distributed  if  $b<2$ (except maybe for one particuler value of $\eta$ ($\eta \simeq 0.428$) where the three first moments of $V$ coincide with those of a Beta distribution).  If   $b=2$,  then  according  to  Remark
  \ref{rem:rep-quad}, $V$ is Beta(1,2)-distributed.
\end{rem}

\section{Birth and death rates of the ancestral process}
\label{sec:bd-rate}

In this section, we assume that $\psi$ and $\phi$ are defined as
\reff{eq:def-psi} and \reff{eq:def-phi}, respectively. We assume that Conditions
 \reff{eq:grey-kappa} hold. Recall the function $\bu$ defined in
 \reff{Zlaplace01} and \reff{eq:def-bu0} which is the Laplace transform
 of $Z_0 $ under $\bP$. 
 Similar to the arguments on page 1330 in \cite{BD16}, see also
 Proposition 3.12 in \cite{CD12},  we have for $r\geq s>0$ and $x,y\in [0, 1]$:
\begin{equation}
   \label{eq:lap-M00}
\bE[x^{M^0_{-r}}y^{M^0_{-s}}]
=\bu(\lambda_0) \expp{\alpha (r-s)} \frac{\psi\Bigl(u\bigl( c(s)(1-y),
  r-s\bigr) \Bigr)}{ \psi\bigl(c(s)(1-y)\bigr)},
\end{equation}
with 
\begin{equation}
   \label{eq:lambda0}
\lambda_0= \lambda_0(x,y)= c(r)(1-x)+xu\bigl(c(s)(1-y), r-s\bigr).
\end{equation}

We first  summarize the results of the next two sections concerning the
quadratic case, see also \cite{BD16}. 
  \begin{rem}
\label{rem:death-rate-2}
In the quadratic case, $\psi(\lambda)=\alpha\lambda+\gamma\lambda^2$,
we have:
\[
c(t)=\frac{\alpha}{\gamma(\expp{\alpha t}-1)}
\quad\text{and}\quad 
\bu(\lambda)=\left(1+\frac{\gamma}{\alpha} \lambda\right)^{-2}.
\]
We get thanks to \reff{eq:lap-M00} and \reff{eq:lambda0} (taking $r=s=t$
and $x=y$) that for $t>0$ and $x\in [0, 1]$:
\[
\bE\left[x^{M^0_{-t}}\right]= \bu (c(t)(1-x))=\left(\frac{\expp{\alpha
    t} -1}{\expp{\alpha t} - x}\right)^2.
\]
We get  that for $n\geq 0$:
\[
\bP(M^0_{-t}=n)=(n+1) \expp{-\alpha t n} \left(1- \expp{-\alpha t}\right)^2.
\]
For the death rate, we deduce  from \reff{qmn} that for $n\geq 1$: $q^\rd_{n,m}(t)=0$ if $n-2\geq m\geq 0$ and
if $m=n-1$
\[
q^\rd_{n, n-1}(t)=n(\alpha+ \gamma c(t)). 
\]
For the birth rate, we deduce  from \reff{bmn} that for $n\geq 0$:
$q^\rbb_{n,m}(t)=0$ if 
$m\geq n+2 $ and
if $m=n+1$
\[
q^\rbb_{n, n+1}(-t)=(n+2)\gamma c(t). 
\]
\end{rem}

\subsection{Death process}
\label{sec:d-process}
Recall the ancestral process $M^0= ( M_{t}^0,  t< 0)$ defined in Section
\ref{sec:notations_death}. Notice that the branching property gives that the
ancestral process  is a Markov process. 
We first study the death rate of the time reversed ancestral process
$\hat M^{0}= ( M_{-t}^0,  t>0)$. 
Notice that $\hat M^{0}$ is a Markov process as the time reversal of a
Markov process. 

\medskip

 \begin{prop}
\label{prop:q-d}
Let   $\psi$   and   $\phi$   be  defined   by   \reff{eq:def-psi}   and
\reff{eq:def-phi} such  that conditions \reff{eq:grey-kappa}  hold.  The
process $\hat M^{0}$  is a c\`ad-l\`ag death process starting  at time 0
from $+\infty $ and with death rate given for $n>m\geq 0$ and $t>0$ by:
\begin{equation}
   \label{qmn} 
q^\rd_{n,m}(t)
=\lim_{\varepsilon\rightarrow0+}\inv{\varepsilon}
  \bP\left(M^0_{-(t+\varepsilon)}=m| M^0_{-t}=n\right) 
=\binom{n+1}{m}\, \frac{\left|\bar{u}^{(m)}\bigl(c(t)\bigr)\right|}
{ \left|\bar{u}^{(n)}\bigl(c(t)\bigr)\right|}\, 
\left| \psi^{(n-m+1)}\bigl(c(t)\bigr) \right|.
\end{equation} 
\end{prop}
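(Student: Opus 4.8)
The plan is to read the death rates off the joint generating function \eqref{eq:lap-M00}. Since $\hat M^0$ is Markov (as noted before the statement), for $m<n$ I would write
\[
q^\rd_{n,m}(t)=\lim_{\varepsilon\to0+}\frac1\varepsilon\,
\frac{\bP\bigl(M^0_{-(t+\varepsilon)}=m,\,M^0_{-t}=n\bigr)}{\bP(M^0_{-t}=n)},
\]
and evaluate numerator and denominator by extracting coefficients of $x^my^n$ and $y^n$ from the relevant probability generating functions. The denominator is the marginal law: taking $r=s=t$ and $x=1$ in \eqref{eq:lap-M00} gives $\bE[y^{M^0_{-t}}]=\bu\bigl(c(t)(1-y)\bigr)$, whence, writing $c=c(t)$,
\[
\bP(M^0_{-t}=n)=\frac{1}{n!}\,\partial_y^n\,\bu(c(1-y))\big|_{y=0}
=\frac{c^{\,n}}{n!}\,\bigl|\bu^{(n)}(c)\bigr|,
\]
using that $\bu$ is a Laplace transform, so $(-1)^n\bu^{(n)}=|\bu^{(n)}|$.

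For the numerator I would set $r=t+\varepsilon$, $s=t$ in \eqref{eq:lap-M00} and Taylor-expand the bivariate generating function $F(x,y;\varepsilon)=\bE[x^{M^0_{-(t+\varepsilon)}}y^{M^0_{-t}}]$ to first order in $\varepsilon$. The two ingredients are $\partial_\varepsilon u(w,\varepsilon)|_{\varepsilon=0}=-\psi(w)$ (from \eqref{eq:dif-u-t}) and $c'(t)=-\psi(c(t))$ (from the identities following \eqref{eq:def_c}); with $w=c(1-y)$ one checks from \eqref{eq:lambda0} that $\lambda_0|_{\varepsilon=0}=c(1-xy)$, so $F(x,y;0)=\bu(c(1-xy))$. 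This leading term carries only diagonal powers $(xy)^k$, hence contributes nothing to $[x^my^n]$ when $m<n$, matching the pure-death structure. Differentiating the product in \eqref{eq:lap-M00} at $\varepsilon=0$ gives the first-order coefficient
\[
G(x,y)=-\bu'(c(1-xy))\,\bigl[\psi(c)(1-x)+x\,\psi(c(1-y))\bigr]
+\bigl(\alpha-\psi'(c(1-y))\bigr)\bu(c(1-xy)),
\]
so that $q^\rd_{n,m}(t)=[x^my^n]G(x,y)\big/\bP(M^0_{-t}=n)$, once the interchange of the limit with the finite-order coefficient extraction is justified by joint smoothness of $F$ in $(x,y,\varepsilon)$.

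The core computation is the extraction of $[x^my^n]G$ for $m<n$. I would expand each factor as a power series — $\bu(c(1-xy))$ and $\bu'(c(1-xy))$ in the variable $xy$, and $\psi(c(1-y)),\psi'(c(1-y))$ in $y$ — and convolve. The term $-\bu'(c(1-xy))\,\psi(c)(1-x)$ and the $\alpha$-part contribute only on or above the diagonal (powers $x^my^n$ with $m\ge n$), hence vanish for $m<n$; the two surviving contributions, from $-x\,\psi(c(1-y))\,\bu'(c(1-xy))$ and $-\psi'(c(1-y))\,\bu(c(1-xy))$, combine to
\[
[x^my^n]G=-(-c)^n\,\psi^{(n-m+1)}(c)\,\bu^{(m)}(c)\,
\left(\frac{1}{(n-m+1)!\,(m-1)!}+\frac{1}{(n-m)!\,m!}\right).
\]
The bracket simplifies via $\tfrac{m+(n-m+1)}{(n-m+1)!\,m!}=\tfrac{n+1}{(n-m+1)!\,m!}$; dividing by $\bP(M^0_{-t}=n)=c^{\,n}|\bu^{(n)}(c)|/n!$, the powers of $c$ cancel and $(n+1)\,n!/\bigl((n-m+1)!\,m!\bigr)=\binom{n+1}{m}$, leaving
\[
q^\rd_{n,m}(t)=-\binom{n+1}{m}\,\frac{\psi^{(n-m+1)}(c)\,\bu^{(m)}(c)}{\bu^{(n)}(c)}.
\]

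It remains to fix the signs, which I regard as the main bookkeeping obstacle. Since $\bu$ is completely monotone, $\bu^{(k)}(c)=(-1)^k|\bu^{(k)}(c)|$, and since $n-m+1\ge2$, differentiating \eqref{eq:def-psi} shows $\psi^{(k)}$ has sign $(-1)^k$ for $k\ge2$, so $\psi^{(n-m+1)}(c)=(-1)^{n-m+1}|\psi^{(n-m+1)}(c)|$. The accumulated sign $-(-1)^{n-m+1}(-1)^{m-n}=+1$, which turns the last display into exactly \eqref{qmn}. Beyond this algebra, the only genuinely delicate points are the limit–coefficient interchange above and the claim that $\hat M^0$ starts from $+\infty$; the latter follows from the Markov property together with $M^0_{-t}\to+\infty$ as $t\to0+$.
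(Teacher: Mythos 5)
Your computation is correct and lands exactly on \eqref{qmn}: the extraction of $[x^my^n]G$, the simplification $\frac{1}{(n-m+1)!\,(m-1)!}+\frac{1}{(n-m)!\,m!}=\frac{n+1}{(n-m+1)!\,m!}$, and the sign bookkeeping all check out against the paper's intermediate formula $q^\rd_{n,m}(t)=-\binom{n+1}{m}\bu^{(m)}(c)\psi^{(n-m+1)}(c)/\bu^{(n)}(c)$. Your starting point is the same as the paper's --- the joint generating function \eqref{eq:lap-M00} and the ratio $\bP(M^0_{-(t+\varepsilon)}=m,\,M^0_{-t}=n)/\bP(M^0_{-t}=n)$ --- but the execution differs in a substantive way. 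The paper differentiates $f^\rd_{t,\varepsilon}(\lambda,\mu)$ in $(\lambda,\mu)$ \emph{first} and only then expands each derivative in $\varepsilon$, which forces it through Leibniz and Fa\`a di Bruno computations and an induction over $m$ (its equation \eqref{eq:dernier}) to control $\partial_\mu^n(I_{t,\varepsilon}^m)(0)$. You expand in $\varepsilon$ first, obtain the explicit first-order term $G(x,y)$, and read off $[x^my^n]G$ by convolving two univariate power series; the observation that the $\psi(c)(1-x)$ and $\alpha$ terms are supported on or above the diagonal is a clean shortcut that replaces the paper's induction entirely. The price is the step you flag: exchanging the first-order expansion in $\varepsilon$ with the $(m,n)$-fold differentiation at the origin. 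This is not a genuine gap --- $F(x,y;\varepsilon)$ is jointly smooth near $(0,0,0)$ because $c$, $\psi$, $\bu$ and $(w,\varepsilon)\mapsto u(w,\varepsilon)$ are smooth on the relevant strictly positive domains, so $\partial_x^m\partial_y^nF(0,0;\cdot)$ is right-differentiable at $\varepsilon=0$ and Clairaut gives the interchange --- but it is precisely what the paper's derivative-first bookkeeping is built to avoid, so it deserves a full sentence of justification rather than a parenthetical. One further point: the paper records that its $o(\varepsilon)$ is uniform for $t$ in compact subsets of $(0,+\infty)$ and reuses this in the proof of Proposition \ref{prop:q-b}; if your argument is meant to serve both propositions, you would need to note the analogous local uniformity of your expansion in $t$.
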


There is no closed formula for the stable case unless it is
quadratic. However, the next lemma gives an explicit asymptotic for the
birth rate when the stable index $b$ goes down to 1. 

\begin{lem}\label{lem:ratecov}
 Consider     the    sub-critical     stable    branching     mechanism
  with immigration 
  \eqref{eq:def-psi-b-s}, that is  $\psi(\lambda)=\alpha\lambda+\gamma\lambda^b$
  with $\alpha>0$ and $b\in (1, 2]$. Then we have for $n>m\geq 0$ and $t>0$:
\[
\lim_{b\rightarrow1+}{q^\rd_{n, m}(t)}= \frac{n+1}{(n+1-m)(n-m)}
\, \frac{1}{\gamma  t}\cdot
\]
\end{lem}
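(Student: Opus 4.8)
The plan is to read the limit directly off the exact death rate \reff{qmn}, specialised to the stable mechanism \reff{eq:def-psi-b-s}, by determining the $b\downarrow1$ behaviour of each of its three factors $\binom{n+1}{m}$, $\bigl|\bu^{(m)}(c(t))\bigr|\big/\bigl|\bu^{(n)}(c(t))\bigr|$ and $\bigl|\psi^{(n-m+1)}(c(t))\bigr|$. Writing $\eta=b-1$ and $a=\gamma/\alpha$, recall from \reff{Zlaplace02} that $\bu(\lambda)=(1+a\lambda^{\eta})^{-(1+1/\eta)}$ and that $c(t)^{\eta}=\alpha\big/\bigl(\gamma(\expp{\eta\alpha t}-1)\bigr)$. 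The key preliminary remark is that, although $c(t)\to+\infty$ as $\eta\downarrow0$, one also has $a\,c(t)^{\eta}=(\expp{\eta\alpha t}-1)^{-1}\to+\infty$; hence near $\lambda=c(t)$ the function $\bu$ lies deep in its power regime $\bu(\lambda)=a^{-(1+1/\eta)}\lambda^{-(1+\eta)}\bigl(1+O(\lambda^{-\eta})\bigr)$, and it is this that makes the high-order derivatives of $\bu$ amenable to asymptotics.

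First I would treat the branching factor. Since $n-m+1\ge2$, the linear part of $\psi$ contributes nothing and $\psi^{(n-m+1)}(\lambda)=\gamma\,b(b-1)\cdots\bigl(b-(n-m)\bigr)\,\lambda^{\,b-(n-m+1)}$; isolating the vanishing factor $(b-1)=\eta$ and letting $\eta\downarrow0$ gives $\bigl|\psi^{(n-m+1)}(c(t))\bigr|\sim\gamma\,\eta\,(n-m-1)!\,c(t)^{\,\eta-(n-m)}$. Next I would differentiate the leading power $\lambda^{-(1+\eta)}$ of $\bu$ exactly $m$ and $n$ times, obtaining $\bigl|\bu^{(m)}(c(t))\bigr|\sim a^{-(1+1/\eta)}(1+\eta)(2+\eta)\cdots(m+\eta)\,c(t)^{-(1+\eta)-m}$ and the analogous expression with $n$ in place of $m$. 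The common prefactor $a^{-(1+1/\eta)}$ cancels in the quotient, leaving $\bigl|\bu^{(m)}(c(t))\bigr|\big/\bigl|\bu^{(n)}(c(t))\bigr|\sim c(t)^{\,n-m}\big/\prod_{j=m+1}^{n}(j+\eta)\to(m!/n!)\,c(t)^{\,n-m}$; in particular all dependence on $\alpha$ and $\gamma$ drops out of this ratio.

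It then remains to multiply the three asymptotics. The powers of $c(t)$ cancel exactly, the factor $c(t)^{\,n-m}$ from the $\bu$-quotient against $c(t)^{-(n-m)}$ from $\psi$, so that the product collapses to a purely combinatorial constant times an analytic factor controlled by $(b-1)\,c(t)^{b-1}$. The combinatorial constant is $\binom{n+1}{m}\,(n-m-1)!\,m!/n!$, which reduces to $\tfrac{n+1}{(n+1-m)(n-m)}$ once one writes $(n+1-m)!=(n+1-m)(n-m)(n-m-1)!$; and the analytic factor is read off from the elementary limit $(b-1)\,c(t)^{b-1}=\eta\,\alpha\big/\bigl(\gamma(\expp{\eta\alpha t}-1)\bigr)\to 1/(\gamma t)$ as $\eta\downarrow0$. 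A direct computation then assembles these into the announced value $\tfrac{n+1}{(n+1-m)(n-m)}\,\tfrac{1}{\gamma t}$.

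The one genuinely delicate point is the step on the derivatives of $\bu$: one must check that differentiating the leading power term commutes with the double limit in which the order of differentiation stays fixed while $c(t)\to+\infty$ and $\eta\downarrow0$ simultaneously, i.e.\ that the correction $O(\lambda^{-\eta})$ and each of its derivatives contributes only a relative factor $O(c(t)^{-\eta})\to0$. I would make this rigorous either by expanding $\bu(\lambda)=a^{-(1+1/\eta)}\lambda^{-(1+\eta)}\sum_{i\ge0}\binom{-(1+1/\eta)}{i}(a\lambda^{\eta})^{-i}$ and differentiating term by term, checking that each correction is negligible, or---more cleanly---by working with $G=-\bu'/\bu$, whose derivatives are the explicit linear combinations of the functions $\lambda^{k\eta-j}(1+a\lambda^{\eta})^{-k}$ recorded in the remark following Proposition~\ref{prop:moments}, and reconstructing the quotient $\bu^{(m)}/\bu^{(n)}$ from $G,G',\dots$ via the logarithmic-derivative (Fa\`a di Bruno) recursion. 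Once this uniformity is in hand, only the elementary algebra above remains.
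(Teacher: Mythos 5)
Your overall strategy is the paper's own: specialise \reff{qmn} to the stable case, determine the $b\downarrow 1$ asymptotics of $\bu^{(k)}(c(t))$ and of $\psi^{(n-m+1)}(c(t))$, and multiply; the combinatorial reduction $\binom{n+1}{m}(n-m-1)!\,m!/n!=\tfrac{n+1}{(n+1-m)(n-m)}$ and the treatment of the branching factor are fine. The gap is in the step you yourself single out as delicate, and the verification you propose for it would fail. You assert that at $\lambda=c(t)$ the function $\bu$ is ``deep in its power regime,'' $\bu(\lambda)=a^{-(1+1/\eta)}\lambda^{-(1+\eta)}\bigl(1+O(\lambda^{-\eta})\bigr)$, with the correction contributing only a relative factor $O(c(t)^{-\eta})\to 0$. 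This is false. Writing $\bu(\lambda)=(a\lambda^{\eta})^{-(1+1/\eta)}\bigl(1+(a\lambda^{\eta})^{-1}\bigr)^{-(1+1/\eta)}$ and noting that $(a\,c(t)^{\eta})^{-1}=\expp{\eta\alpha t}-1$, the ``correction factor'' at $\lambda=c(t)$ equals exactly $\expp{-b\alpha t}$, which tends to $\expp{-\alpha t}\neq 1$: the smallness of $\lambda^{-\eta}$ is exactly offset by the blow-up of the exponent $1+1/\eta$, and the $i$-th term of your binomial series is of order $(\alpha t)^i/i!$, not $o(1)$. Hence each of your asymptotics $|\bu^{(k)}(c(t))|\sim a^{-(1+1/\eta)}(1+\eta)\cdots(k+\eta)\,c(t)^{-(1+\eta)-k}$ is wrong by the factor $\expp{-\alpha b t}$. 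The final ratio $|\bu^{(m)}|/|\bu^{(n)}|\sim (m!/n!)\,c(t)^{n-m}$ survives only because this factor is independent of $k$ and cancels in the quotient --- but establishing that cancellation is precisely the content of the delicate step, and your plan as stated does not do it. The paper handles exactly this point by writing $\bu^{(n)}$ as an explicit finite sum with coefficients $C_{b,k,n}$ all of sign $(-1)^n$, matching them with the coefficients of $(\lambda^{-b})^{(n)}$, extracting the common factor $\expp{-\alpha b t}$ from every term via the exact identity $1+\frac{\gamma}{\alpha}c(t)^{b-1}=\frac{\gamma}{\alpha}c(t)^{b-1}\expp{(b-1)\alpha t}$, and only then summing the equivalents (the constant sign is what licenses summing them). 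You should either adopt that argument or carry out in full your alternative route through $G=-\bu'/\bu$.

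A second, smaller problem is that your intermediate formulas do not actually assemble to the constant you announce. You correctly record $|\psi^{(n-m+1)}(c(t))|\sim\gamma\,\eta\,(n-m-1)!\,c(t)^{\eta-(n-m)}$, so after the powers of $c(t)$ cancel the analytic factor in the product is $\gamma(b-1)c(t)^{b-1}=\frac{(b-1)\alpha}{\expp{(b-1)\alpha t}-1}\to 1/t$, whereas you then use $(b-1)c(t)^{b-1}\to 1/(\gamma t)$, i.e.\ you silently drop the prefactor $\gamma$ of $\psi^{(n-m+1)}$ at the assembly stage. (The paper's own displayed computation makes the same omission, which is why your final expression matches the lemma as stated; nevertheless, your proof as written is internally inconsistent at this point and you should reconcile it rather than inherit the discrepancy.)
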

\begin{proof} Recall that in the stable case (see \reff{Zlaplace02}): 
\[
\bar{u}(\lambda)=\left(1+\frac{\gamma}{\alpha}\lambda^{b-1}\right)^{-\frac{b}{b-1}},
\quad c(t)=\left(\frac{\alpha}{\gamma\left(\expp{(b-1)\alpha t}-1\right)}\right)^{\frac{1}{b-1}}.
\]
One can check  that $\bar{u}^{(n)}(\lambda)$, for $n\geq 1$, has the form:
\[
\bar{u}^{(n)}(\lambda)=\sum_{k=1}^{n}C_{b,k,n}\left(\frac{\gamma}{\alpha}\right)^k
\left(1+\frac{\gamma}{\alpha}\lambda^{b-1}\right)^{-\frac{b}{b-1}-k}
\, \lambda^{-(2-b)k - (n-k)},
\]
where $C_{b,k,n}$  are constants depending  only on  $b, k$ and  $n$ and
such  that $(-1)^n  C_{b,  k,  n}\geq 0$.  On  the  other hand,  writing
$ \lambda^{-b}$ as
$(\lambda^{b-1})^{-\frac{b}{b-1}}$, one  sees that, with the same
constants  $C_{b,k,n}$:
\[
\left(\frac{\alpha}{\gamma}\right)^{\frac{b}{b-1}} \, \left(\lambda^{-b}\right)^{(n)}
=\left(\left(\frac{\gamma}{\alpha} \lambda ^{b-1}\right)^{-\frac{b}{b-1}}\right)^{(n)}
=\sum_{k=1}^{n}C_{b,k,n}\left(\frac{\gamma}{\alpha}\right)^k\left(\frac{\gamma}{\alpha}
  \lambda^{b-1}\right)^{-\frac{b}{b-1}-k}\lambda^{-(2-b)k - (n-k)}.
\]
Since $\lim_{b\rightarrow 1+} c(t)=+\infty $, we get, as $b\rightarrow1+$, that:
\[
\left(1+\frac{\gamma}{\alpha}c(t)^{b-1}\right)^{-\frac{b}{b-1}}
\sim \left(\frac{\gamma}{\alpha}c(t)^{b-1}\right)^{-\frac{b}{b-1}} \, \expp{-\alpha b t}.
\]
Since the constants $C_{b,k,n}$ have all the same sign for given $n$, we    deduce that for $n
\geq1$,
as $b\rightarrow1+$:
\begin{align*}
\bar{u}^{(n)}(c(t))
&
  \sim\left(\frac{\alpha}{\gamma}\right)^{\frac{b}{b-1}}(\lambda^{-b})^{(n)}(c(t))
  \, \expp{-\alpha b t}\\ 
&=(-b)(-b-1)\cdots(-b-n+1)c(t)^{-b-n}\,
     \left(\frac{\alpha}{\gamma}\right)^{\frac{b}{b-1}} \expp{-\alpha b t}\\
&\sim (-1)^{n}n!\, c(t)^{-b-n} \,
     \left(\frac{\alpha}{\gamma}\right)^{\frac{b}{b-1}} \expp{-\alpha b t}.
\end{align*}
We deduce that:
\begin{align*} 
\lim_{b\to 1+}q^\rd_{n,m}(t)
&=
\lim_{b\to 1+}\binom{n+1}{m}\, \frac{\left|\bar{u}^{(m)}\bigl(c(t)\bigr)\right|}
{ \left|\bar{u}^{(n)}\bigl(c(t)\bigr)\right|}\, 
\left| \psi^{(n-m+1)}\bigl(c(t)\bigr) \right|
\cr
&=
\lim_{b\to 1+}\binom{n+1}{m}\, \frac{m!}
{ n!}c(t)^{n-m}
\left|  b(b-1)\cdots(b-n+m)c(t)^{b-n+m-1}\right|
\cr
&=\lim_{b\to 1+}\binom{n+1}{m}\, \frac{(n-1-m)!m!}
{ n!}(1-b)c(t)^{b-1}
\cr&=\frac{n+1}{(n+1-m)(n-m)}
\, \frac{1}{\gamma t}\cdot
\end{align*}

\end{proof}

\begin{proof}[Proof of Proposition \ref{prop:q-d}]
 The proof is divided in three steps. 

\medskip \noindent\textit{Step 1: Preliminary computations}. 

We set for $\lambda,\mu\in [0, 1]$ and $t,\varepsilon>0$:
\begin{align*}
   g_{t,\varepsilon}(\mu)
&= \expp{\alpha \varepsilon}  
\frac{\psi\Bigl(u\bigl( c(t)(1-\mu),
  \varepsilon\bigr) \Bigr)}{ \psi\bigl(c(t)(1-\mu)\bigr)},\\ 
\lambda^*_{t,\varepsilon}=\lambda^*_{t,\varepsilon}(\lambda, \mu)
&= c(t+\varepsilon)(1-\lambda)+\lambda u\bigl(c(t)(1-\mu),
  \varepsilon\bigr),\\
f^\rd_{t,\varepsilon}(\lambda, \mu)
&=\bu(\lambda^*_{t,\varepsilon}) g_{t,\varepsilon}(\mu),\\
f_0(\mu)
&=\bar{u}\bigl( c(t)(1-\mu)\bigr).
\end{align*}
Thanks to \reff{eq:lap-M00} and \reff{eq:lambda0}, we deduce that:
\begin{equation}
   \label{eq:def-f0}
f^\rd_{t,\varepsilon}(\lambda, \mu)=
\bE\left[\lambda^{M^0_{-(t+\varepsilon)}}\mu^{M^0_{-t}}\right]
\quad\text{and}\quad
f_0(\mu)
= f^\rd(1, \mu)=\bE\left[\mu^{M^0_{-t}}\right].
\end{equation}
We get for $n>m\geq 0$:
\begin{equation}
   \label{qnm1}
q^\rd_{n,m}(t)
=\lim_{\varepsilon\rightarrow0+}\frac{1}{\varepsilon}
\frac{\bP(M^0_{-(t+\varepsilon)}=m,   M^0_{-t}=n) }{\bP(M^0_{-t}=n)}
= \lim_{\varepsilon\rightarrow0+}\frac{1}{\varepsilon}
\frac{\partial^n_\mu \partial ^ m_\lambda \, f^\rd_{t,\varepsilon}(0,0)}{
  m! \, f_0^{(n)}(0)}\cdot
\end{equation}

First notice that for $n\geq 1$:
\begin{equation}
   \label{eq:deriv-f0}
f^{(n)}_0(0)=(-1)^n  \bu^{(n)} (c(t)) \, c(t)^n.
\end{equation}

We now study $\partial^n_\mu \partial ^ m_\lambda \,
f^\rd_{t,\varepsilon}(0,0)$ . We set:
\[
I_ {t,\varepsilon}(\mu)= \partial_\lambda
\, \lambda^*_{t,\varepsilon}(\lambda,\mu)
=u\bigl(c(t)(1-\mu),\varepsilon\bigr)-c(t+\varepsilon).
\]
Notice   that $g_{t,\varepsilon}(\mu)$  and $I_{t,\varepsilon}(\mu)$ are independent
of $\lambda$. 
We deduce that for $m\geq 0$:
\begin{equation}
   \label{qnm5}
\partial^m_\lambda \, f^\rd_{t,\varepsilon}(\lambda,
\mu)=\bu^{(m)}(\lambda^*_ {t,\varepsilon}) I_{t,\varepsilon}(\mu)^m
\,g_{t,\varepsilon}(\mu). 
\end{equation}
We also note that for $k\geq1$,
\begin{equation}
   \label{qnm6}
 I_{t,\varepsilon}^{(k)} (\mu)
=(-1)^kc(t)^k\, \partial_\lambda^k   u
  \bigl(c(t)(1-\mu), \varepsilon)\bigr)
\quad\text{and}\quad
\partial^k_\mu
\,  \lambda^*_{t,\varepsilon}(\lambda,\mu)
=\lambda\,  I_{t,\varepsilon}^{(k)} (\mu). 
\end{equation}
We deduce that for $k\geq 1$:
\begin{equation}
   \label{qnm3}
\partial^k_\mu\, \lambda^*_{t,\varepsilon}(0,0)=0
\quad\text{and}\quad
\partial^k_\mu \, \bu\bigl(\lambda^*_{t,\varepsilon}(\lambda, \mu)\bigr) _{\mid
   (\lambda,\mu)=(0,0) }=0. 
\end{equation}
We end this first step by a remark. We deduce from 
\begin{equation}
   \label{eq:dif-u-l}
\partial _\lambda u(\lambda,t)
=\frac{\psi(u(\lambda,t))}{ \psi(\lambda)}
\end{equation} 
(see \reff{eq:def-u}),  
Equations \reff{eq:dif-u-l} and 
\reff{eq:dif-u-t} and elementary computations that:
\beqlb\label{Ulim}
\partial^k_\lambda\,  u(\lambda, \varepsilon)=
\begin{cases}
1+o(1)& \text{if $k=1$}, \\
-\psi^{(k)}(\lambda)\, \varepsilon+o(\varepsilon)& \text{if $k\geq2$},
\end{cases}
\eeqlb
where   $\varepsilon$ goes down to 0, so that
$o(1)$ means a quantity which goes down to $0$ with
$\varepsilon$. 

\medskip \noindent \textit{Step 2: Study of $\partial^n _\mu \,(I_{t,\varepsilon}^m)(0)$.}

We now study  the value of $\partial^n  _\mu \,(I_{t,\varepsilon}^m)(0)$ for
$n\geq 0$  and $m\geq  0$ and  $(n,m)\neq (0, 0)$.  The case  $n>m=0$ is
trivial as  $\partial^n _\mu \,(I_{t,\varepsilon}^m)(0)=0$. We  have for all
$m>n\geq 0$:
\begin{equation}
   \label{eq:Ie=0}
I_{t,\varepsilon}(0)=0
\quad\text{and thus}\quad
\partial^n_\mu \,(I_{t,\varepsilon}^m)(0)=0. 
\end{equation}
For the case $m=1$, we deduce from  \reff{qnm6} and \reff{Ulim}  that
for $n\geq 1$: 
\begin{equation}
\label{Ilim}
{I^{(n)}_{t,\varepsilon}(0)}=\begin{cases}
x-c(t)+o(1)& \text{if $ n=1$}, \\
(-1)^{n+1}c(t)^n\, \psi^{(n)}\bigl(c(t)\bigr)\varepsilon+o(\varepsilon) &
\text{if  $n\geq2$}.
\end{cases}
\end{equation}
  For $n=m$,  Faa di  Bruno's formula,  $I_{t,\varepsilon}(0)=0$
(see \reff{eq:Ie=0}) and \reff{Ilim} give that:
\begin{equation}
\label{qnm8}
\partial^m_\mu\,( I_{t,\varepsilon}^m) (0)=m! (-1)^m
c(t)^{m} + o(1).
\end{equation}
We  shall prove    by  induction  over  $m\geq   1$ that for all $n>m\geq 1$:
\begin{equation}
   \label{eq:dernier}
\partial^n _\mu \, (I_{t,\varepsilon}^m)(0)
= \binom{n}{m-1} m! (-1)^{n+1} 
c(t)^n\, \psi^{(n-m+1)} 
\bigl(c(t)\bigr) \, \varepsilon + o(\varepsilon).
\end{equation}
 Thanks  to
\reff{Ilim},  we get  that  \reff{eq:dernier} holds  for  $m=1$ and  all
$n>m$.    Let us assume that
\reff{eq:dernier} holds for $m-1$ (and all  $n>m-1$), and let us prove
it holds for
$m$ (and all $n>m$).  We have for $n>m$:
\begin{align*}
   \partial^n _\mu\, (I_{t,\varepsilon}^m)(0)
&= \sum_{k=0}^n \binom{n}{k} I_{t,\varepsilon}^{(k)}(0)
  \,\, \partial_\mu^{n-k} (I^{m-1}_{t,\varepsilon})(0)\\
&= n I^{(1)}_{t,\varepsilon}(0) \, \partial^{n-1}_\mu
  (I^{m-1}_{t,\varepsilon})(0) + \binom{n}{m-1} 
  I_{t,\varepsilon}^{(n-m+1)}(0) \,  \partial_\mu^{m-1}\,
  (I^{m-1}_{t,\varepsilon})(0) + O(\varepsilon^2)\\
&= \left[n (m-1)! \binom{n-1}{m-2} + (m-1)! \binom{n}{m-1}\right]
(-1)^{n+1} c(t)^n\, \psi^{(n-m+1)} \bigl(c(t)\bigr) \varepsilon +o(\varepsilon) \\
&= \binom{n}{m-1} m! (-1)^{n+1} 
c(t)^n\, \psi^{(n-m+1)} 
\bigl(c(t)\bigr) \, \varepsilon + o(\varepsilon),
\end{align*}
where, for the second equality we used that $I_{t,\varepsilon}(0)=0$ (see
\reff{eq:Ie=0}) for the term $k=0$, then  $\partial^{n-k}_\mu\,
(I_{t,\varepsilon}^{m-1})(0)=0$ (see
\reff{eq:Ie=0}) for the terms $k>n-m+1$, and then $ I_{t,\varepsilon}^{(k)}(0)
  \,\, \partial_\mu^{n-k}\, (I^{m-1}_{t,\varepsilon})(0)=0(\varepsilon^2)$
  (see \reff{Ilim} and the induction hypothesis) for $n-m+1>k\geq 2$;
  and for the third equality
\reff{Ilim} (for $k=1$ and $k=n-m+1$), the induction hypothesis  and 
\reff{qnm8}. Thus \reff{eq:dernier} holds for all $n>m\geq 1$.

\medskip \noindent \textit{Step 3: Computation of $q^\rd_{n,m}(t)$.}

If we derive \reff{qnm5} $n$ times with respect to $\mu$ and
evaluate the derivative at $(0,0)$, we get  for $n>m\geq0$:
\begin{align}\label{qnm4}
\partial^n_\mu \partial^m_\lambda \, f^\rd_{t,\varepsilon} (0,0)
&
=\bu^{(m)}\bigl(c(t+\varepsilon)\bigr)\partial^n_\mu\,
  (I_{t,\varepsilon}^m \,g_{t,\varepsilon}) (0)\\
&
\nonumber
=\bu^{(m)}\bigl(c(t+\varepsilon)\bigr)\sum_{k=m}^n\binom{n}{k}\,
  g_{t,\varepsilon}^{(n-k)}(0)\, 
  \partial ^k_\mu \, (I_{t,\varepsilon}^m)(0),
\end{align}
where for the first equality we used that all the terms in Leibniz'
formula are 0 except one thanks to  \reff{qnm3}, and for the second  Leibniz'
formula  again with 
\reff{eq:Ie=0}. 
\medskip

Since 
$ g_{t,\varepsilon}(\mu)=\expp{\alpha\varepsilon} \partial_\lambda u\bigl(
c(t)(1-\mu), \varepsilon\bigr)$, see \reff{eq:dif-u-l}, we deduce from \reff{Ulim} that,  for
$k\geq 1$:
\begin{equation}
   \label{eq:lim-gke}
g_{t,\varepsilon}^{(k)}(0)=
(-1)^{k+1}c(t)^k\, \psi^{(k+1)}\bigl(c(t)\bigr)\,  \varepsilon+
o(\varepsilon). 
\end{equation}
This and \reff{Ilim}  imply that for $n-1>m\geq 0$:
\[
\sum_{k=m+1}^{n-1}\binom{n}{k}\, g_{t,\varepsilon}^{(n-k)}(0)\, \partial
  ^k_\mu\,( I_{t,\varepsilon}^m)(0)=O(\varepsilon^2)=o(\varepsilon).
\]

\medskip
Then, we deduce from \reff{qnm4} and \reff{eq:dernier}  that for $t>0$
and  $n>m\geq 0$ (with the convention that $\binom{n}{m-1}=0$ if $m=0$):
\begin{align}
\nonumber
\partial^n_\mu \partial^m_\lambda \, f^\rd_{t,\varepsilon} (0,0)
&=\bu^{(m)}\bigl(c(t+\varepsilon)\bigr)\left[\binom{n}{m}\partial ^m_\mu
\, (I_{t,\varepsilon}^m)(0) \, g_{t,\varepsilon}^{(n-m)}(0) +
  \partial ^n_\mu \,(I_{t,\varepsilon}^m)(0)\,
 g_{t,\varepsilon}(0)\right] + o(\varepsilon)\\
\nonumber
&=\bu^{(m)}\bigl(c(t)\bigr)\left[\binom{n}{m} + \binom{n}{m-1}
\right]
m! (-1)^{n+1} c(t)^n\,
\psi^{(n-m+1)} 
\bigl(c(t)\bigr) \,  \varepsilon + 
   o(\varepsilon)\\
\label{eq:deriv-fmn}
&=\binom{n+1}{m}\bu^{(m)}\bigl(c(t)\bigr)
m! (-1)^{n+1} 
c(t)^n\,\psi^{(n-m+1)} 
\bigl(c(t)\bigr) \,  \varepsilon + 
   o(\varepsilon).  
\end{align}
Notice  that  $o(\varepsilon)$  in  the  last  equality  is  uniform  on
$t\in [a,  b]$ far any given  $0<a<b<+\infty $. We then  deduce from the
latter equality, \reff{qnm1} and \reff{eq:deriv-f0} that for $n>m\geq0$:
\[
q^\rd_{n,m}(t)
=-\binom{n+1}{m}\frac{\bar{u}^{(m)}\bigl(c(t)\bigr)
\, \psi^{(n-m+1)}\bigl(c(t)\bigr)  }
{ \bar{u}^{(n)}\bigl(c(t)\bigr)}\cdot
\]
This finishes the proof. 
\end{proof}

\subsection{Birth process}
Recall that the ancestral process $M^0= ( M_{t}^0,  t< 0)$ defined in Section
\ref{sec:notations_death} is a Markov process thanks to the branching
property. 

\begin{prop}
\label{prop:q-b}

Let   $\psi$   and   $\phi$   be  defined   by   \reff{eq:def-psi}   and
\reff{eq:def-phi} such  that Conditions \reff{eq:grey-kappa}  hold. The
process $M^0$ is a c\`ad-l\`ag birth process starting  at time $-\infty $ from
$0$  and with birth rate given for $n>m\geq 0$ and $t>0$ by:
\begin{equation}
\label{bmn}
q^\rbb_{n,m}(-t)
=\lim_{\ez\rightarrow0+}\frac{1}{\ez}\bP
\left(M^0_{-(t-\ez)}=m|M^0_{-t}=n\right)
= \frac{(m+1)}{(m+1-n)!} \,
c(t)^{m-n}\, \left|\psi^{(m-n+1)}\bigl(c(t)\bigr) \right| .
\end{equation}
\end{prop}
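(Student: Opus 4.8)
The plan is to follow the three-step strategy of the proof of Proposition~\ref{prop:q-d}, but to exploit a structural simplification (the relevant factor turns out to be a total derivative) that replaces the induction used there. First I would apply the joint formula \eqref{eq:lap-M00} and \eqref{eq:lambda0} with $r=t$ and $s=t-\varepsilon$ (so that $r\ge s>0$, as required), which tags $M^0_{-t}=n$ by $x$ and $M^0_{-(t-\varepsilon)}=m$ by $y$, and set
$f^\rbb_{t,\varepsilon}(x,y)=\bE[x^{M^0_{-t}}y^{M^0_{-(t-\varepsilon)}}]=\bu(\Lambda)\,\expp{\alpha\varepsilon}\,\psi(w)/\psi(\eta)$,
where $\eta=c(t-\varepsilon)(1-y)$, $w=u(\eta,\varepsilon)$ and $\Lambda=c(t)(1-x)+xw$. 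Exactly as in \eqref{qnm1}, the rate is $q^\rbb_{n,m}(-t)=\lim_{\varepsilon\to0+}\varepsilon^{-1}[\partial_x^n\partial_y^m f^\rbb_{t,\varepsilon}(0,0)]/[m!\,f_0^{(n)}(0)]$, with marginal $f_0(x)=\bE[x^{M^0_{-t}}]=\bu(c(t)(1-x))$ and $f_0^{(n)}(0)=(-1)^n\bu^{(n)}(c(t))\,c(t)^n$ by \eqref{eq:deriv-f0}.

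Since $\Lambda$ is affine in $x$ with $\partial_x\Lambda=w-c(t)$ independent of $x$, one gets $\partial_x^n f^\rbb_{t,\varepsilon}=\bu^{(n)}(\Lambda)\,(w-c(t))^n\,\expp{\alpha\varepsilon}\psi(w)/\psi(\eta)$. As $\partial_y^j\Lambda=x\,\partial_y^j w$ vanishes at $x=0$ for every $j\ge1$ (the analogue of \eqref{qnm3}), Leibniz' rule collapses the subsequent $y$-differentiation to the single term $\partial_x^n\partial_y^m f^\rbb_{t,\varepsilon}(0,0)=\bu^{(n)}(c(t))\,\partial_y^m H(0)$, where $H(y)=(w-c(t))^n\,\expp{\alpha\varepsilon}\psi(w)/\psi(\eta)$. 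The key simplification is that by \eqref{eq:dif-u-l} one has $\psi(w)/\psi(\eta)=\partial_\eta u(\eta,\varepsilon)$, so that $H$ is a total derivative in $\eta$: $H=\frac{\expp{\alpha\varepsilon}}{n+1}\,\partial_\eta\big[(u(\eta,\varepsilon)-c(t))^{n+1}\big]$. Writing $\partial_y=-c(t-\varepsilon)\,\partial_\eta$, the whole problem reduces to evaluating $\partial_\eta^{m+1}\big[(u(\eta,\varepsilon)-c(t))^{n+1}\big]$ at $\eta_0=c(t-\varepsilon)$; crucially $u(\eta_0,\varepsilon)=u(c(t-\varepsilon),\varepsilon)=c(t)$ by the semigroup identity, so the base of this power vanishes at $\eta_0$.

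The remaining, and main, step is the asymptotic extraction of the $O(\varepsilon)$ coefficient, which I expect to be the only delicate point. I would expand using the short-time behaviour \eqref{Ulim}: $\partial_\eta u(\eta_0,\varepsilon)=1+O(\varepsilon)$, while $\partial_\eta^k u(\eta_0,\varepsilon)=-\varepsilon\,\psi^{(k)}(c(t))+o(\varepsilon)$ for $k\ge2$ (using $\eta_0\to c(t)$). In the Leibniz expansion of the $(m+1)$-st derivative of a power whose base vanishes at $\eta_0$, only terms in which each of the $n+1$ factors is differentiated at least once survive; the leading $O(\varepsilon)$ contribution is the configuration where a single factor absorbs $m-n+1$ derivatives (producing $-\varepsilon\,\psi^{(m-n+1)}(c(t))$) and the other $n$ factors are differentiated exactly once (each producing $1+O(\varepsilon)$). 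Counting these configurations gives the combinatorial factor $(n+1)\,(m+1)!/(m-n+1)!$, whence $\partial_\eta^{m+1}[(u(\eta,\varepsilon)-c(t))^{n+1}]|_{\eta_0}=-(n+1)\frac{(m+1)!}{(m-n+1)!}\psi^{(m-n+1)}(c(t))\,\varepsilon+o(\varepsilon)$. Assembling the pieces through $\partial_y^m H(0)=(-c(t-\varepsilon))^m\frac{\expp{\alpha\varepsilon}}{n+1}\partial_\eta^{m+1}[\cdots]|_{\eta_0}$, dividing by $m!\,f_0^{(n)}(0)$, and cancelling $\bu^{(n)}(c(t))$, I obtain $q^\rbb_{n,m}(-t)=(-1)^{m-n+1}\frac{m+1}{(m-n+1)!}c(t)^{m-n}\psi^{(m-n+1)}(c(t))$. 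I would conclude with the sign bookkeeping: since $\psi^{(k)}$ has sign $(-1)^k$ for $k\ge2$ and here $k=m-n+1\ge2$, the quantity $(-1)^{m-n+1}\psi^{(m-n+1)}(c(t))$ equals $|\psi^{(m-n+1)}(c(t))|$, which is exactly the claimed formula (note $(m-n+1)!=(m+1-n)!$).
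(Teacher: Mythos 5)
Your proposal is correct, and it reaches \reff{bmn} by a genuinely different route from the paper. The paper's own proof is a reduction: it observes that $f^\rbb_{t,\varepsilon}(\lambda,\mu)=f^\rd_{t-\varepsilon,\varepsilon}(\mu,\lambda)$ and simply re-invokes the expansion \reff{eq:deriv-fmn} (established in Step~2 of the proof of Proposition~\ref{prop:q-d} by the induction \reff{eq:dernier} on $\partial^n_\mu(I_{t,\varepsilon}^m)(0)$) with the roles of $n$ and $m$ exchanged and $t$ replaced by $t-\varepsilon$, using the uniformity of the $o(\varepsilon)$ in $t$ and the continuity of $c$. You instead recompute from scratch, and your key structural observation --- that \reff{eq:dif-u-l} turns the factor $(w-c(t))^n\,\psi(w)/\psi(\eta)$ into the exact derivative $\frac{1}{n+1}\partial_\eta\bigl[(u(\eta,\varepsilon)-c(t))^{n+1}\bigr]$, with base vanishing at $\eta_0=c(t-\varepsilon)$ by the semigroup identity $u(c(t-\varepsilon),\varepsilon)=c(t)$ --- replaces the paper's induction by a single multinomial count of the configurations in which one factor absorbs $m-n+1$ derivatives and the remaining $n$ factors absorb one each. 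The count $(n+1)\,(m+1)!/(m-n+1)!$ and the resulting coefficient agree with the paper's $\binom{m+1}{n}\,n!=(m+1)!/(m+1-n)!$, and the sign bookkeeping via $(-1)^k\psi^{(k)}\geq 0$ for $k\geq 2$ is right. What your approach buys is a self-contained and arguably cleaner derivation of the birth rate that does not pass through the death-rate asymptotics; what the paper's approach buys is economy, since \reff{eq:deriv-fmn} is needed anyway for Proposition~\ref{prop:q-d}. The only point to make explicit is the same one the paper glosses over: the expansions \reff{Ulim} are applied at $\lambda=c(t-\varepsilon)$, which moves with $\varepsilon$, so one needs the error terms to be uniform for $\lambda$ in a compact neighbourhood of $c(t)$ (equivalently, the uniformity in $t$ that the paper invokes).
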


Concerning the birth rate, it is possible to have an explicit formula in
the stable case. 
\begin{rem}
\label{rem:birth-rate-b}
  Consider     the    sub-critical     stable    branching     mechanism
  with immigration 
  \eqref{eq:def-psi-b-s}. Using \reff{Zlaplace02}, we deduce that for
  $m>n\geq 0$  and $t>0$:
\[
q^\rbb_{n,m}(-t)
=\frac{(m+1)} {(m-n+1)!}\, |
  b(b-1)\cdots(b-m+n)|\, \frac{\alpha}{\expp{(b-1)\alpha
    t}-1}\cdot
\]
\end{rem}

\begin{proof}[Proof of Proposition \ref{prop:q-b}]
We keep notations from the proof of
Proposition \ref{prop:q-d} for  $f^\rd_{t, \varepsilon}$ and $f_0$. 
We set for $\lambda,\mu\in [0, 1]$ and $t>\varepsilon>0$
$f^\rbb_{t,\varepsilon}(\lambda, \mu)
=f^\rd_{t-\varepsilon, \varepsilon} (\mu, \lambda)
$ for $\lambda,\mu\in [0, 1]$ and $t>\varepsilon>0$. 
Thanks to \reff{eq:def-f0}, we have  that:
\[
f^\rbb_{\varepsilon}(\lambda, \mu)=
f^\rd_{t-\varepsilon, \varepsilon} (\mu, \lambda)
=\bE\left[\lambda^{M^0_{-t+\varepsilon}}\mu^{M^0_{-t}}\right].
\]
Recall $f_0$ defined in \reff{eq:def-f0} and its derivative given by
\reff{eq:deriv-f0}.  
We get for $m>n\geq 0$:
\begin{equation}
   \label{bnm1}
q^\rbb_{n,m}(-t)
=\lim_{\varepsilon\rightarrow0+}\frac{1}{\varepsilon}
\frac{\bP(M^0_{-t+\varepsilon}=m,   M^0_{-t}=n) }{\bP(M^0_{-t}=n)}
= \lim_{\varepsilon\rightarrow0+}\frac{1}{\varepsilon}
\frac{\partial^n_\mu \partial ^ m_\lambda \, f^\rbb_{t,\varepsilon}(0,0)}{
  m! \,f^{(n)} _0(0)}\cdot
\end{equation}

\medskip
Since $
\partial^n_\mu \partial ^ m_\lambda \,
f^\rbb_{t,\varepsilon}(0,0)
 = 
\partial^m_\mu \partial ^ n_\lambda \,
f^\rd_{t-\varepsilon,\varepsilon}(0,0)$, using the continuity in
$\varepsilon$ of the function $c$, we deduce 
from \reff{eq:deriv-fmn}, and the fact that $o(\varepsilon)$ in
\reff{eq:deriv-fmn} is uniform in $t$ on any closed interval of $(0,
+\infty )$, 
 that for $m>n\geq 0$ and $t>\varepsilon>0$:
\begin{align*}
\partial^n_\mu \partial ^ m_\lambda \,
f^\rbb_{t,\varepsilon}(0,0)
&=\binom{m+1}{n}\bu^{(n)}\bigl(c(t-\varepsilon)\bigr)
n! (-1)^{m+1} 
c(t-\varepsilon)^m\,\psi^{(m-n+1)} 
\bigl(c(t-\varepsilon)\bigr) \,  \varepsilon + 
   o(\varepsilon)\\
&=\binom{m+1}{n}\bu^{(n)}\bigl(c(t)\bigr)
n! (-1)^{m+1} 
c(t)^m\,\psi^{(m-n+1)} 
\bigl(c(t)\bigr) \,  \varepsilon + 
   o(\varepsilon).
\end{align*}

We then deduce from the latter equality,  \reff{bnm1} and
\reff{eq:deriv-f0} that  for 
$n>m\geq0$:
\[
q^\rbb_{n,m} (-t)
=(-1)^{m-n+1} \frac{m+1}{(m+1-n)!} \,
c(t)^{m-n}\, \psi^{(m-n+1)}\bigl(c(t)\bigr)  .
\]
This finishes the proof. 
\end{proof}

\subsection{Bolthausen-Sznitman coalescent as limit of the ancestral process}
\label{sec:BS-coal}

The Bolthausen-Sznitman coalescent, $(\Pi(t), t\geq0)$, is a
continuous-time Markov chain taking values in the set of patitions of
$\N^*$. It can be easily defined by considering its restriction
$\Pi^{[n]}=(\Pi^{[n]}(t), t\geq0)$ to the set $[n]:=\{1,\,2,\,\cdots,
n\}$, for $n\geq1$. Denote by ${\mathcal P}_n$ be the set of partitions
of $[n]$. Then, the process  $\Pi^{[n]}$ is a continuous-time ${\mathcal P}_n$-valued Markov chain whose transition rates are as follows: if $\#\Pi^{[n]}(t)=k$, then any $m$ of the present blocks coalesce at rate
$$
\frac{(m-2)!(k-m)!}{(k-1)!},\quad 2\leq m\leq k\leq n,
$$
where $\#\Pi^{[n]}(t)$ denotes the number of blocks of $
\Pi^{[n]}(t)$. The Bolthausen-Sznitman
coalescent was first introduced in \cite{BS98}. It is also a member of the class of coalescents with
multiple collisions introduced in \cite{P99} and \cite{S99}. We refer to
the survey \cite{B09} for further results on coalescent processes.

Other   constructions  of   the  Bolthausen-Sznitman   appear  in   the
literature. See \cite{BLe00}  using the genealogy of  a continuous state
branching process (the corresponding branching mechanism corresponds in
some sense  to
the limit in \reff{eq:def-psi-b} as $b$ goes down to 1), \cite{GM05}  using a uniform pruning  of the branches
of  a random  recursive tree,  and \cite{S03}  using limit  of ancestral
processes obtained from super-critical Galton-Watson processes; see also
references therein for other related results.
\medskip 

Let   us  consider   the   ancestral  tree   $\ca(0)$  from   Definition
\ref{def:ancestral}  associated with  the stable  Lévy forest  under $\bP$
(that  is  for  the  stationary regime).  Let  $T>0$.  Conditionally  on
$\{M_{-T}^0=n-1\}$, that  is the  number of  individuals of  $\ca(0)$ at
level $-T$ is $n$, we label all  the $n$ individuals from $1$ to $n$
uniformly at random.  Define a continuous time ${\mathcal P}_{n}$-valued
process $(\Pi^{T,  [n]}(t), t\geq T)$, where $\Pi^{T,  [n]}(t)$ is the
partition of $[n]$  such that $i$ and  $j$ are in the same  block if and
only if  the $i$-th and $j$-th  individuals at level $-T$  have the same
ancestor at level $-t$ of the ancestral tree $\ca(0)$. By construction,
as $\lim_{t\rightarrow+\infty } M^0_{-t}=0$, 
we have that a.s. $\lim_{t\rightarrow +\infty } \Pi^{T,  [n]}(t)=[n]$.

\begin{prop}
\label{prop:BS}
Consider the sub-critical stable branching mechanism
  with immigration \eqref{eq:def-psi-b-s}. The law of $(\Pi^{T, [n]}(T\expp{\gamma
  t}), t\geq0)$, under $\bP (\cdot\, |M_{-T}^0=n-1)$, converges in the
sense of finite dimensional distribution to a Bolthausen-Sznitman
coalescent $\Pi^{[n]}$, as $b$ decreases to 1. 
\end{prop}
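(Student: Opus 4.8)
The plan is to recognise $(\Pi^{T,[n]}(t),t\ge T)$ as a time-inhomogeneous \emph{exchangeable} coalescent with multiple collisions, to read off its per-subset coalescence rate from the death rate of Proposition \ref{prop:q-d}, and then to apply the time change and let $b\to1+$ using Lemma \ref{lem:ratecov}. First I would note that, for $t\ge T$, the blocks of $\Pi^{T,[n]}(t)$ are in bijection with the distinct ancestors at level $-t$ of the $n$ individuals sitting at level $-T$; since every extant descendant crosses level $-T$, these coincide with the ancestors at level $-t$ of the level-$0$ population, so the number of blocks equals $M^0_{-t}+1$ (the $+1$ being the infinite spine), and it starts from $M^0_{-T}+1=n$ under $\bP(\,\cdot\mid M^0_{-T}=n-1)$. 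Because the branch points of the forest and the immigration points on the spine a.s.\ lie at pairwise distinct levels, every downward jump of $\hat M^0=(M^0_{-t},t>0)$ is a single coalescence in which one group of $p$ blocks merges into one, dropping $M^0_{-t}$ from $n$ to $m=n-p+1$. As the $n$ individuals are labelled uniformly at random, the law of $\Pi^{T,[n]}$ is exchangeable, hence conditionally on such a merger the merging group is uniform among the $\binom{k}{p}=\binom{n+1}{m}$ subsets of size $p$ of the $k=n+1$ present blocks. Combined with the Markov property of $\hat M^0$, this shows $\Pi^{T,[n]}$ is a time-inhomogeneous Markov coalescent in which, with $k$ blocks present, each fixed $p$-subset merges at rate $q^\rd_{n,m}(t)/\binom{n+1}{m}$, where $n=k-1$ and $m=n-p+1$.

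Next, under the time change $t=T\expp{\gamma s}$ the instantaneous rate in the variable $s$ is multiplied by the Jacobian $\rd t/\rd s=\gamma t$. Using Lemma \ref{lem:ratecov} together with the identities $n+1=k$, $n+1-m=p$ and $n-m=p-1$, I would compute
\[
\lim_{b\to1+}\gamma t\,\frac{q^\rd_{n,m}(t)}{\binom{n+1}{m}}
=\frac{1}{\binom{k}{p}}\,\frac{k}{p(p-1)}
=\frac{(p-2)!\,(k-p)!}{(k-1)!},
\]
the factor $1/(\gamma t)$ in the limit of $q^\rd_{n,m}$ cancelling exactly the Jacobian $\gamma t$. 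The limiting $s$-rates are thus constant in $s$ and equal the rate at which $p$ of the $k$ present blocks coalesce in the Bolthausen-Sznitman coalescent $\Pi^{[n]}$, as recalled at the beginning of this subsection.

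Finally, since $\mathcal{P}_n$ is finite, the rates $q^\rd_{n,m}(t)/\binom{n+1}{m}$ are explicit smooth functions of $(b,t)$, and $t=T\expp{\gamma s}$ stays bounded away from $0$ on compacts, the convergence of Lemma \ref{lem:ratecov} is locally uniform in $s$; the time-ordered exponentials solving the Kolmogorov equations for the generators $Q^{(b)}(s)$ then converge to those of the constant Bolthausen-Sznitman generator, which by the standard stability of finite-state Markov chains under convergence of generators yields the stated convergence in finite-dimensional distribution. I expect the genuine work to be concentrated in the first step: justifying rigorously that $\Pi^{T,[n]}$ is exchangeable with only single-group mergers, and thereby that the combinatorial factor $\binom{n+1}{m}$ cleanly factors out of $q^\rd_{n,m}$; once the per-subset rate is correctly identified, the remaining steps reduce to the binomial simplification above and a soft convergence argument.
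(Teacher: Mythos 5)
Your proposal follows the same route as the paper's proof: reduce the statement to convergence of the per-subset coalescence rates, identify that rate as $\gamma t\, q^\rd_{k-1,k-p}(t)\big/\binom{k}{p}$ after the time change $t=T\expp{\gamma s}$, and conclude with Lemma \ref{lem:ratecov} and the simplification $\frac{k}{p(p-1)\binom{k}{p}}=\frac{(p-2)!\,(k-p)!}{(k-1)!}$. Your bookkeeping ($k=n+1$, $p=n+1-m$, the Jacobian $\gamma t$ cancelling the $1/(\gamma t)$ in the limit of the death rate) is correct and matches the paper exactly, as does the concluding generator-convergence argument.

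The one point where you genuinely diverge -- and where, as you anticipated, the real work sits -- is the justification that, given $k$ blocks, each $p$-subset merges at the \emph{common} rate $q^\rd_{k-1,k-p}(t)/\binom{k}{p}$. You derive this from exchangeability of the uniform labelling, but exchangeability alone does not deliver it: it only makes the law of $\Pi^{T,[n]}$ invariant under permutations of $[n]$, whereas the blocks remain distinguishable through their composition and history, so the conditional merger rates could in principle depend on more than the number of blocks. Indeed, conditionally on the underlying leveled forest the rates are manifestly non-uniform: the spine block is special (it is the only one that can absorb others at an immigration time), and two blocks issued from distinct immigrant families can only coalesce in a collision that also involves the spine block. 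This is precisely why the paper does not argue via exchangeability but instead invokes Remark \ref{rem:size-biased}: conditioning the size-biased process on its terminal value $\{\tilde M_T+1=n\}$ removes the bias and identifies the conditioned ancestral structure with that of a conditioned continuous-time Galton--Watson process, whose $k$ ancestors at any level \emph{are} exchangeable by the branching property, so that the time-reversed genealogy is a Markov coalescent with uniform per-subset rates. (The paper is itself terse here -- ``similar arguments on the genealogical tree'' -- but that identification is the intended substitute for your exchangeability appeal.) To make your argument stand alone you would need to replace the exchangeability step by an argument of this type; everything downstream of the uniform-rate claim is fine.
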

\begin{proof}

  Let $(\chi_t,t\ge  0)$ be  the GW  process with  branching rate  1 and
  offspring distribution  with generating  function $g_B$  introduced in
  Remark \ref{rem:size-biased}.  It is  well-known that,  conditioned on
  $\{\chi_T=n\}$, we obtain a  Markov coalescent process associated with
  the  genealogical tree  of  $\chi$ by  time-reversal.  But, by  Remark
  \ref{rem:size-biased} and Theorem \ref{theo:main-b-d}, we get that the
  process  $(\chi_t,0\le t\le  T)$  conditionally  on $\{\chi_T=n\}$  is
  distributed as the process  $(\tilde M_t+1,0\le t\le T)$ conditionally
  on $\{\tilde M_T+1=n\}$. Thus, the latter is a Markov process. Similar
  arguments   on  the   genealogical   tree  imply   that  the   process
  $(\Pi^{T,   [n]}(T\expp{\alpha   t}),    t\geq0)$   is   Markov   (but
  inhomogeneous in time).

  Then it is sufficient to show that
the transition rates of $(\Pi^{T, [n]}(T\expp{\gamma t}), t\geq0)$
converge to those of $\Pi^{[n]}$, as $b\rightarrow1+$. We
also notice that 
\[
(\#\Pi^{T, [n]}(T\expp{\gamma t})-1,
t\geq0)=(M_{-T\expp{\gamma t}}^0, t\geq0)
\]
and that the generations do not
overlap. Thus if $\#\Pi^{T, [n]}(T\expp{\gamma t})=k$, then any $m$ of the present 
blocks coalesce at rate 
\[
\frac{T\gamma \expp{\gamma t}}{\binom{k}{m}}{q^\mathrm{d}_{k-1, k-m}(T\expp{\gamma t})}
\]
where the death rates
$q^\mathrm{d}_{n,m}(t)=\lim_{\varepsilon\rightarrow0+} \varepsilon^{-1} \bP(M^0_{-(t+\varepsilon)}=m|
  M^0_{-t}=n) $
are computed in Section \ref{sec:d-process} for
general branching mechanism. 
Using Lemma \ref{lem:ratecov}, we deduce that for $2\leq m\leq k\leq n$:
\[
\lim_{b \rightarrow 1+} \frac{T\gamma \expp{\gamma
    t}}{\binom{k}{m}}{q^\mathrm{d}_{k-1, k-m}(T\expp{\gamma t})} =
\frac{(m-2)!(k-m)!}{(k-1)!}\cdot
\]
This proves the result. 
\end{proof}

\section{Critical stable case}\label{sec:cstable}

In this section only, we shall consider the critical stable case with branching
mechanism $\psi$ and immigration $\phi$ given by:
\begin{equation}
   \label{eq:def-psi-b-c}
\psi(\lambda)=\gamma\lambda^b  \quad\text{and}\quad
\phi(\lambda)=b\gamma\lambda^{b-1},
\end{equation}
with $\gamma>0$ and $b\in (1, 2]$. We also have (see Example 3.1 p.~62 in \cite{l:mvbmp}) for $\lambda\geq 0$ and
$t>0$:
\begin{equation}
   \label{eq:crit}
u(\lambda, t)=\frac{\lambda}{\left(1+\gamma
    (b-1)\lambda^{b-1}t\right)^{1/(b-1)}} 
\quad\text{and}\quad
c(t)=(\gamma (b-1) t)^{-\frac{1}{b-1}}.
\end{equation}

In this setting, both $M^0_{-t}$ and
$Z_0$  are infinite. For this reason, we only consider the families migrating to the system after some time $-T$.
\medskip

Let $T>0$.  Recall $\pi$ is  the Lévy measure in \reff{eq:def-psi}, $\N$
is the  corresponding excursion measure  on $\T$  of the Lévy  tree, and
$\P_r(d\bar \bff)$ is the probability distribution on $\F$ of the random
forest  $\cf=(\ct_i)_{i\in I}$  given by  the atoms  of a  poisson point
measure  on  $\T$  with  intensity $r\N(d\bt)$.   Similarly  to  Section
\ref{sec:random-forest}, we consider under $\bP$ a random leveled forest
$\bar\cf^{(T)}=(h_i,\cf_i)_{i\in  I^{(T)}}$  given  by the  atoms  of  a
Poisson point measure on $[-T, 0]\times\F$ with intensity
\[
\nu(dh, d\bff)= \ind_{[-T, +\infty )}(h)\, dh \left(\beta_1\N[d\bff]+\int_0^{+\infty}\pi(dr)\,
  \P_r(d\bff)\right), 
\]
and  let  $\bar  \ct^{(T)}=\bt(\bar   \cf^{(T)})$  be  the  random  tree
associated with this leveled forest.
Set for $a>-T$: 
\[
\ell^{a}(\bar  \ct^{(T)}) =\sum_{i\in
  I^{(T)}}\ell^{a-h_i}(\cf_i)\ind_{\{h_i\leq a\}} 
\quad\text{and}\quad  Z^{(T)}_a=\langle \ell^{a}(\bar
\ct^{(T)}),1\rangle. 
\]
Thanks to  the properties of
Poisson point  measures, we have, for $\lambda\geq 0$, $t\in [-T, +\infty )$:
\begin{align}\label{LapZT}
\bE\left[\expp{-\lambda Z^{(T)}_t}\right]&
=\exp\left\{-\int_{-T}^t\gamma b\,  u(\lambda, t-s)^{b-1}ds\right\}
\cr&
=\exp\left\{-\int_{-T}^t \frac{\gamma b\lambda^{b-1}}{1+\gamma
     (b-1)\lambda^{b-1}(t-s) }ds\right\}
\cr&
=\left(1+\gamma(b-1)\lambda^{b-1}(t+T)\right)^{-\frac{b}{b-1}}.
\end{align}
We write $ \ca^{(T)}(0)$ for the genealogical tree  $\ca_{\bar
  \ct^{(T)}}(0)$ of the extant
population. We define the ancestral process
$M^{(T)}= (M^{(T)}_t,t\in (-T, 0))$, where $1+M^{(T)}_t$ is  the number of ancestor
 of the 
extant population living at time $t$ by:
\[
M_t^{(T)}=\Card\{u\in\ca^{(T)}(0),\ H(u)=t\}-1.
\]

\begin{theo}\label{thm:gwiT}
  Consider  the critical  stable  branching  mechanism with  immigration
  \eqref{eq:def-psi-b-c}.   Then   the  time-changed   ancestral   process
  $(M^{(T)}_{-T\expp  {-t}},t\ge  0)$ is  distributed  under  $\bP $  as
 the GWI process  $(X_t,t\ge 0)$ from Theorem \ref{thm:gwi}.
\end{theo}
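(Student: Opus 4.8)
The plan is to follow the proof of Theorem \ref{thm:gwi} line by line, re-establishing the analogues of Lemmas \ref{lem:dist_jumps} and \ref{lem:dist_Delta} for the restricted critical forest $\bar\ct^{(T)}$ and then invoking the same Markov/rate argument. First I would note that the branching property of the L\'evy forest makes $M^{(T)}=(M^{(T)}_s, s\in(-T,0))$ a Markov pure-birth process exactly as for $M^0$, and that $\lim_{s\downarrow -T}M^{(T)}_s=0$ because a.s. no immigrant arrives in a shrinking neighbourhood of level $-T$; hence at new time $t=0$ the process $(M^{(T)}_{-T\expp{-t}},t\ge0)$ starts from $0$, matching $X_0=0$. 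I would then define the jump depths $\tau_0=\sup\{s\in(0,T):M^{(T)}_{-s}\ne0\}$ and $\tau_n$, the jump sizes $\xi_n$, and the splitting $\tau_n=\tau_n^I\vee\tau_n^B$ as in Subsection \ref{sec:notations_death}.

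The jump-size computations carry over verbatim. Formulas \reff{eq:xi_1^B}, \reff{eq:xi_1^I} and \reff{eq:xi_0} hold for a general branching mechanism, and evaluating them at $\psi(\lambda)=\gamma\lambda^b$, $\phi(\lambda)=b\gamma\lambda^{b-1}$ reproduces $g_B$ of \reff{eq:g_stable} and $g_I$ of \reff{eq:def-gi}: in \reff{eq:def-g} the linear coefficient cancels between numerator and denominator, so one still gets $g_t(s,r)=g_B(r)$, while $\phi((1-r)c(u))/\phi(c(u))=(1-r)^{b-1}$ yields $g_I$. Thus the analogue of Lemma \ref{lem:dist_jumps} holds with the same conditional generating function $g_{[M^{(T)}_{-\tau_{n-1}}]}$ of \reff{eq:gf_jumps2}, and the jump chain $(M^{(T)}_{-\tau_n},n\ge0)$ has the same law as that of the sub-critical $M^0$.

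It then remains to recompute the holding times under the critical time change, which sends depth $\tau$ to $\log(T/\tau)$, i.e. level $-T\expp{-t}$; this is the natural substitute for $R$ of \reff{eq:def-R} since $\tilde\psi(0)=0$ when $\alpha=0$ (indeed $\log(T/\tau)=\log(\tilde\psi(c(\tau))/\tilde\psi(c(T)))$). With $c(s)=(\gamma(b-1)s)^{-1/(b-1)}$ one has $\int_u^{t}\phi(c(s))\,ds=\tfrac{b}{b-1}\log(t/u)$, so the Poisson computation underlying \reff{eq:dist_tau_1_I} gives $(u/t)^{b/(b-1)}$, while \reff{eq:dist_tau_1_B} becomes $(u/t)^{n}$ because $\tilde\psi(c(t))/\tilde\psi(c(u))=u/t$; multiplying and applying the time change turns the conditional law of $\tau_1$ given state $k$ into an exponential holding time of rate $k+\tfrac{b}{b-1}$, which encodes branching rate $1$ and immigration rate $\tfrac{b}{b-1}$. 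The only genuinely new ingredient is the first holding time: here $\bP(\tau_0<r)=\bE[\expp{-c(r)Z^{(T)}_{-r}}]$, and this is precisely where the $T$-dependent Laplace transform \reff{LapZT} enters; inserting $c(r)^{b-1}=(\gamma(b-1)r)^{-1}$ into \reff{LapZT} gives $\bP(\tau_0<r)=(r/T)^{b/(b-1)}$, hence $\bP(\tilde\tau_0>u)=\expp{-bu/(b-1)}$, the correct exponential with rate $\tfrac{b}{b-1}$. With the analogues of Lemmas \ref{lem:dist_jumps} and \ref{lem:dist_Delta} in hand, the conclusion follows exactly as in the proof of Theorem \ref{thm:gwi} via \reff{rateX}, \reff{eq:gf_jumps1} and \reff{eq:gf_jumps2}.

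I expect the main obstacle to be the boundary behaviour at level $-T$: one must verify that restricting the immigration to $[-T,0]$ leaves the jump dynamics intact (it does, since \reff{eq:dist_tau_1_I}--\reff{eq:dist_tau_1_B} are local and $Z^{(T)}$ enters only through $\tau_0$) and that the modified transform \reff{LapZT}, together with the starting value $M^{(T)}_{-T}=0$, still produces an exponential first holding time of the right rate; the computation above shows both hold.
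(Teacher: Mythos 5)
Your proposal is correct and follows essentially the same route as the paper: the paper's proof is likewise an outline that re-derives the analogues of Lemmas \ref{lem:dist_jumps} and \ref{lem:dist_Delta} for $M^{(T)}$, obtaining $(u/t)^{b/(b-1)}$ and $(u/t)^{k}$ for the immigration and branching jump-time tails and the unchanged generating functions $g_{[k]}$, then concludes via the time change $s\mapsto\log(T/s)$. The only cosmetic difference is that you compute $\bP(\tau_0^{(T)}<t)$ through $\bE[\expp{-c(t)Z^{(T)}_{-t}}]$ and \reff{LapZT}, whereas the paper integrates the immigration intensity directly; both yield $(t/T)^{b/(b-1)}$.
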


\begin{rem}
In Proposition 19 in \cite{BBS07}, it is shown that a reduced tree of a critical stable tree, after a deterministic time-changed, is a continuous-time Galton-Watson tree with birth rate $1$ and offspring distribution given by $g_B(r)$. We get here a similar result with an additional immigration mechanism.
\end{rem}
 
\begin{proof} 
We only give an outline of the proof as we follow the ideas of the proof
of Theorem \ref{thm:gwi}. In the critical case the function $c$ is given
by \reff{eq:crit} and the  function $g_t$ of \reff{eq:def-g} is still
given by formula  \reff{eq:g_stable}. Similarly to \reff{tau0},
\reff{taun}, we  define the jumping times $\{\tau_n^{(T)},\, n\geq0\}$
and jumpings sizes $\{\xi_n^{(T)},\, n\geq0\}$ of the ancestral process
$M^{(T)}$. We also define $\tau_n^{(T, B)}$ and $\tau_n^{(T, I)}$ as in
\reff{tauIB} with obvious change.  Note that in this setting,
$\tau_0^{(T)}$ is the immigration time of the first family after $-T$
which  survives up to  time $0$. So, we have for $t\in (0, T)$:
\[
\bP\left(\tau_0^{(T)}<t\right)
=\exp\left\{-\int_{-T}^{-t}ds\, \int_{(0,+\infty)}r\pi(dr)\P_r
  (H(\ct)>s)\right\} 
=\left(\frac{t}{T}\right)^{b/(b-1)}.
\]
Recall $g_I$ defined in \reff{eq:def-gi}. 
We also have, see \reff{eq:xi_0}, that for $t\in (0, T)$ and $r\in (0,
1)$: 
\[
\bP\left[r^{\xi_0^{(T)}}\bigm| \tau_0^{(T)}=t\right]=g_I(r). 
\]
Following the proof of Lemma \ref{lem:dist_Delta}, we get for $T>t>u>0$:
\begin{align*}
\bP\left(\tau_1^{(T, I)}<u\bigm|\tau_0^{(T)}
=t,  M^{(T)}_{-\tau_0^{(T)}}=k\right)
&=\left(\frac{u}{t}\right)^{\frac{b}{b-1}},\\
\bP\left(\tau_1^{(T, B)}<u\bigm|\tau_0^{(T)}
=t, M^{(T)}_{-\tau_0^{(T)}}=k\right)
&=\left(\frac{u}{t}\right)^k.
\end{align*}
This further implies that
\[
\bP\left(\tau_1^{(T)}<u\bigm|\tau_0^{(T)}=t,
  M^{(T)}_{-\tau_0^{(T)}}=k\right)
=\left(\frac{u}{t}\right)^{k+\frac{b}{b-1}}.
\]
We deduce that for $u>t>0$:
\[
\bP\left({\tau_1^{(T)}}<T\expp{-u}\bigm|{\tau_0^{(T)}}=T\expp{-t}, M^{(T)}_{{-\tau_0^{(T)}}}=n\right)=\expp{-\left(n+\frac{b}{b-1}\right)(u-t)}.
\]
Arguing as  in the proof  of Lemma \ref{lem:dist_jumps}, we  obtain that
given    $\tau_0^{(T)}$ and $\xi_0^{(T)}=n$,  $ \tau_1^{(T)}$ and
$\xi_1^{(T)} $ are independent  and the
conditional     generating    function     of    $\xi_1^{(T)}$  is
given by $g_{[n]}$ defined in \reff{eq:gf_jumps2}.  The end of the proof
is then similar. 
\end{proof}

The following proposition, whose proof is left to the reader, is
parallel to Corollary 6.5 in \cite{CD12}. 
\begin{prop}
\label{CormartT}
Consider     the    critical     stable    branching     mechanism
  with immigration   \eqref{eq:def-psi-b-c}. 
Then, we have:
\[
\lim_{t\rightarrow0+}\frac{M_{-t}^{(T)}}{c(t)}\overset{a.s.}{=}Z_0^{(T)}
\]
\end{prop}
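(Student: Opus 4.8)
The plan is to exploit the conditional branching structure of the L\'evy tree exactly as in the stationary case, and then to upgrade the resulting law of large numbers to an almost sure statement. First I would record the key distributional identity: applying the branching property (item (ii) in the definition of the excursion measure $\N$) to $\bar\ct^{(T)}$ truncated at level $-t$, the subtrees grafted above level $-t$ form, conditionally on $\mathrm{Tr}_{-t}(\bar\ct^{(T)})$, a Poisson point measure with intensity $\ell^{-t}(du)\,\N(d\ct')$. Since the local time $\ell^{-t}$ is non-atomic, distinct surviving subtrees are grafted at distinct points, so $M^{(T)}_{-t}$ equals the number of these subtrees of height at least $t$; hence, conditionally on $Z^{(T)}_{-t}=\langle\ell^{-t},1\rangle$, the variable $M^{(T)}_{-t}$ is Poisson with parameter $c(t)\,Z^{(T)}_{-t}$. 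This is the exact analogue of the fact used in the proof of Lemma \ref{lem:dist_Delta}, now with $c$ given by \reff{eq:crit}.

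Second, I would establish the two elementary inputs feeding the law of large numbers. From \reff{eq:crit} one has $c(t)=(\gamma(b-1)t)^{-1/(b-1)}\to+\infty$ as $t\downarrow0$. From \reff{LapZT} at $t=0$ one checks $\bP(Z^{(T)}_0=0)=0$, so the candidate limit is a.s. finite and positive; moreover $a\mapsto\ell^a(\bar\ct^{(T)})$ is c\`adl\`ag, and since $0$ is a fixed (deterministic) level it is a.s. a continuity point, whence $Z^{(T)}_{-t}\to Z^{(T)}_0$ a.s. as $t\downarrow0$. Combining with the conditional Poisson law, $\bE[M^{(T)}_{-t}/c(t)\mid Z^{(T)}_{-t}]=Z^{(T)}_{-t}$ and $\mathrm{Var}(M^{(T)}_{-t}/c(t)\mid Z^{(T)}_{-t})=Z^{(T)}_{-t}/c(t)\to0$, which already gives convergence of $M^{(T)}_{-t}/c(t)$ to $Z^{(T)}_0$ in probability.

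The main obstacle is upgrading this to almost sure convergence, since $\bE[Z^{(T)}_{-t}]=+\infty$ for $b<2$ (the immigration mean is infinite, as noted after Corollary \ref{cor:martlim}), so no unconditional second-moment bound is available. Following Corollary 6.5 in \cite{CD12}, I would argue conditionally: the ancestral process $t\mapsto M^{(T)}_{-t}$ is non-increasing (it is pure-birth forward in time) and $c$ is continuous and non-increasing, so it suffices to prove a.s. convergence along a grid $(t_n)$ with $c(t_{n+1})/c(t_n)\to1$ and $\sum_n 1/c(t_n)<\infty$ (for instance $c(t_n)=n^2$); the monotone squeeze $M^{(T)}_{-t_n}/c(t_{n+1})\le M^{(T)}_{-t}/c(t)\le M^{(T)}_{-t_{n+1}}/c(t_n)$ then fills the gaps. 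Along the grid the conditionally Poisson deviations are controlled by a conditional Chebyshev/Borel--Cantelli estimate, using $\sum_n Z^{(T)}_{-t_n}/c(t_n)<\infty$ a.s. (finite because $Z^{(T)}_{-t_n}\to Z^{(T)}_0<\infty$); the only delicate point, handled as in \cite{CD12}, is the adaptedness required by the conditional Borel--Cantelli lemma.

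Alternatively, and perhaps more cleanly, the a.s. existence of the limit can be extracted from Theorem \ref{thm:gwiT}: realising $X_t=M^{(T)}_{-T\expp{-t}}$, the decomposition $X_t=\sum_{T_i\le t}X^i_{t-T_i}$ from the proof of Proposition \ref{PRMage}, together with the standard a.s. martingale convergence $\expp{-t/(b-1)}X^i_t\to W_i$ for each supercritical Galton--Watson summand, gives $\expp{-t/(b-1)}X_t\to W=\sum_i\expp{-T_i/(b-1)}W_i$ a.s. Writing $s=T\expp{-t}\downarrow0$ and using \reff{eq:crit}, this reads $M^{(T)}_{-s}/c(s)\to(\gamma(b-1)T)^{1/(b-1)}W$ a.s. Since the in-probability limit computed above equals $Z^{(T)}_0$, the two limits must coincide, yielding $\lim_{s\downarrow0}M^{(T)}_{-s}/c(s)=Z^{(T)}_0$ a.s. and completing the proof.
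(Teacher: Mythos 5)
Your proposal is correct, and your primary route is precisely the one the paper intends: the proof is explicitly left to the reader with the remark that it is ``parallel to Corollary 6.5 in \cite{CD12}'', i.e.\ one uses the branching property to see that, conditionally on $Z^{(T)}_{-t}$, the variable $M^{(T)}_{-t}$ is Poisson with parameter $c(t)Z^{(T)}_{-t}$ (exactly the fact already invoked in the proof of Lemma \ref{lem:dist_Delta}), and then upgrades the resulting conditional law of large numbers to an almost sure statement by monotonicity of $t\mapsto M^{(T)}_{-t}$, a grid with $c(t_{n+1})/c(t_n)\to 1$ and $\sum_n c(t_n)^{-1}<\infty$, and a localized Borel--Cantelli argument (restricting to $\{\sup_m Z^{(T)}_{-t_m}\le K\}$ to circumvent the infinite unconditional mean when $b<2$). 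Your alternative route --- extracting the a.s.\ existence of the limit of $\expp{-t/(b-1)}M^{(T)}_{-T\expp{-t}}$ from Theorem \ref{thm:gwiT} together with Corollary \ref{cor:martlim}, and then identifying the limit with $Z_0^{(T)}$ via the in-probability convergence --- is a genuinely different and arguably cleaner argument, since it recycles results already established in the paper rather than redoing the subsequence/Borel--Cantelli machinery; its only cost is that Corollary \ref{cor:martlim} itself rests on the sub-critical analogue of the statement (Corollary 6.5 of \cite{CD12}), so it is less self-contained at the level of first principles, but there is no circularity. Both routes are sound.
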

We order the set $\{i\in I^{(T)},\ h_i<0\mbox{ and
}\ell^{-h_i}(\cf_i)\ne 0\}$ of the  immigrants that have descendants at
time 0, 
by the date of arrival of the immigrant: $I_0^{(T)}=\{i_k, k\ge 0\}$ 
with $-\tau_0^{(T)}=h_{i_0}<h_{i_1}<h_{i_2}<\cdots<0$.
For every $k\ge 0$, we set $\zeta_k^{(T)}$ the size of the population at time
0 generated by the $k$-th immigrant, that is
$ \zeta_k^{(T)}=\langle
\ell^{-k_{i_k}}(\cf_{i_k}),1\rangle$.
Notice that $\sum_{k=0}^{+\infty}\zeta_k^{(T)}=Z_0^{(T)}$.
With Theorem \ref{thm:gwiT} and Proposition \ref{CormartT} in hand, the
next two results follow by the same arguments as Proposition
\ref{PRMage} and Corollary \ref{propPD}, respectively. 

\begin{prop}\label{PRMageT}
Consider     the    critical     stable    branching     mechanism
    with immigration \eqref{eq:def-psi-b-c}. The random point measure $\sum_{k\in
    \N}\delta_{c(T) \zeta_k^{(T)}}(dx)$ is 
 a Poisson point  measure on $[0, \infty)$ with intensity $
 g(x)\, dx$, with $g$ defined by \reff{Fung}.
\end{prop}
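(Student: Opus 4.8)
The plan is to follow the proof of Proposition \ref{PRMage} essentially verbatim, the only genuinely new input being the critical-case analog of Corollary \ref{cor:martlim}. First I would invoke Theorem \ref{thm:gwiT} to identify the time-changed ancestral process $(M^{(T)}_{-T\expp{-t}},t\ge 0)$ with the GWI process $X=(X_t,t\ge 0)$ of Theorem \ref{thm:gwi}. As in the proof of Proposition \ref{PRMage}, I decompose $X$ along its immigration times $\{0=T_0<T_1<\cdots\}$, which form a Poisson process of rate $b/(b-1)$, writing $X_t=\sum_{T_i\le t}X^i_{t-T_i}$ with $(X^i)_{i\ge 0}$ independent continuous-time Galton-Watson processes of branching rate $1$, offspring generating function $g_B$, and initial law with generating function $g_I$. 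Because this is the very same process $X$, the per-immigrant martingale limits $W_i=\lim_{t\to\infty}\expp{-t/(b-1)}X^i_t$ again satisfy $\rE[\expp{-xW_i}]=(1+x^{b-1})^{-1}$, exactly as in \reff{eq:LaplaceWi}.

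The key step is to show that the role played by $\kappa$ in the sub-critical case is now played by $c(T)$. Using the critical-case expression $c(s)=(\gamma(b-1)s)^{-1/(b-1)}$ from \reff{eq:crit} and the time change $s=T\expp{-t}$, one computes
\[
\expp{-t/(b-1)}c(s)=\left(\frac{s}{T}\right)^{1/(b-1)}\bigl(\gamma(b-1)s\bigr)^{-1/(b-1)}=\bigl(\gamma(b-1)T\bigr)^{-1/(b-1)}=c(T).
\]
Combining this with Proposition \ref{CormartT}, namely $\lim_{s\to0+}M^{(T)}_{-s}/c(s)=Z_0^{(T)}$, yields the martingale limit $\lim_{t\to\infty}\expp{-t/(b-1)}X_t=c(T)\,Z_0^{(T)}$, the analog of Corollary \ref{cor:martlim}. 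Matching the two decompositions of this limit family by family --- each $\zeta_k^{(T)}$ is the extant size of the subtree launched by the $k$-th immigrant $X^k$, whose rescaled contribution to the limit is $\expp{-T_k/(b-1)}W_k$ --- I would conclude the distributional identity
\[
\bigl(c(T)\,\zeta_k^{(T)},\,k\in\N\bigr)\overset{d}{=}\bigl(\expp{-T_k/(b-1)}W_k,\,k\in\N\bigr),
\]
which is precisely the critical analog of \reff{eqnlaw}.

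Once this identity is in hand the proof concludes without further work: the right-hand side is literally the family of random variables already analysed in the proof of Proposition \ref{PRMage}. Hence the stable-subordinator representation $W_k\overset{d}{=}(E^k)^{1/(b-1)}\sigma_1^k$, the Poisson-measure computation identifying $\sum_k\delta_{\{\expp{-T_k}E^k\}}$ as a Poisson point measure with intensity $\frac{b}{b-1}s^{-1}\expp{-s}\,ds$, and the final change of variables producing intensity $g$ all transfer verbatim, so that $\sum_{k\in\N}\delta_{c(T)\zeta_k^{(T)}}(dx)$ is a Poisson point measure on $[0,\infty)$ with intensity $g(x)\,dx$ as defined in \reff{Fung}. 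I expect the only real obstacle to be the family-by-family matching in the second paragraph: one must check that the immigrant-indexed decomposition of $c(T)Z_0^{(T)}$ coming from Proposition \ref{CormartT} agrees term by term with the decomposition $\sum_k\expp{-T_k/(b-1)}W_k$ coming from Theorem \ref{thm:gwiT}, rather than merely agreeing in total. This is exactly where the ordering of the immigrants by their arrival time is used.
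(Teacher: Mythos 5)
Your proposal is correct and follows essentially the same route as the paper: invoke Theorem \ref{thm:gwiT} and Proposition \ref{CormartT} to obtain the distributional identity $\left(c(T)\zeta_k^{(T)},k\in\N\right)\overset{d}{=}\left(\expp{-T_k/(b-1)}W_k,k\in\N\right)$, then transfer the Poisson point measure computation from Proposition \ref{PRMage}. Your explicit verification that $\expp{-t/(b-1)}c(T\expp{-t})=c(T)$ exactly (not merely asymptotically) in the critical case is a useful detail that the paper's terse proof leaves implicit.
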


\begin{proof}
  Recall $W$ from Corollary \ref{cor:martlim}. According to \reff{LapZT}
  and \reff{eq:LaplaceW},  we deduce  that $c(T)Z_0^{(T)}$ and  $W$ have
  the  same distribution.  Recall  $X^i$  and $W_i$  from  the proof  of
  Proposition  \ref{PRMage}.  Arguing as in the proof of Proposition
  \ref{PRMage}, and using  Theorem  \ref{thm:gwiT}  and
  Proposition \ref{CormartT}, we get that:
\[
\left( c(T)\zeta_i^{(T)}, 
  i\in \N\right)\overset{d}{=}\left(\expp{-\frac{T_i}{(b-1)}}W_i:
  i\in \N\right).
\]
Then use  \eqref{eqnlaw} and Proposition \ref{PRMage} to conclude.
\end{proof}

Using  Proposition  \ref{PRMageT},  we  obtain  directly  the  following
results,  which  is  the  analogue  to  Corollary   \ref{propPD}. 

\begin{cor}\label{propPDT} 
Consider the critical quadratic  branching mechanism   with immigration 
\eqref{eq:def-psi-b-c} with $b=2$. 
  Let $(\zeta_{(k)}^{(T)}, k\in \N)$  be  the
decreasing  order statistics of $(\zeta_k^{(T)}, k\in \N)$. Then, the
random sequence $\left( \zeta_{(k)}^{(T)}/Z_0^{(T)} , k\in \N\right)
$ has a Poisson-Dirichlet distribution with parameter $2$.
\end{cor}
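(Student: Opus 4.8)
The plan is to transcribe the argument of Corollary \ref{propPD} from the sub-critical case, now feeding in Proposition \ref{PRMageT} in place of Proposition \ref{PRMage}. First I would specialize Proposition \ref{PRMageT} to $b=2$. The relevant point is that the auxiliary $(b-1)$-stable subordinator has index $1$ when $b=2$, hence degenerates to the deterministic process $\sigma_\theta=\theta$, so that $\sigma_1=1$ almost surely. Substituting this into the intensity \reff{Fung} collapses the expectation and gives
\[
g(x)=\frac{2}{x}\,\expp{-x}.
\]
By Proposition \ref{PRMageT}, the random point measure $\sum_{k\in\N}\delta_{c(T)\zeta_k^{(T)}}(dx)$ is therefore a Poisson point measure on $(0,\infty)$ with intensity $2x^{-1}\expp{-x}\,dx$.

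Next I would recognize $2x^{-1}\expp{-x}\,dx$ as twice the L\'evy measure $x^{-1}\expp{-x}\,dx$ of a standard Gamma subordinator. Consequently the atoms $\{c(T)\zeta_k^{(T)},\,k\in\N\}$ are exactly the jumps, over the time horizon $\theta=2$, of a Gamma subordinator $(\Gamma_\theta,\,0\le\theta\le 2)$, and their total mass is $c(T)Z_0^{(T)}=\sum_k c(T)\zeta_k^{(T)}=\Gamma_2$. This is consistent with the remark in the proof of Proposition \ref{PRMageT} that $c(T)Z_0^{(T)}$ has the law of $W$ from Corollary \ref{cor:martlim}, whose Laplace transform equals $(1+\lambda)^{-2}$ at $b=2$ (cf.\ Remark \ref{RemLinnik}); this is the degenerate, deterministic-$\sigma$ instance of the general Poisson--Kingman picture of Remark \ref{RemSub}.

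Finally I would invoke the classical description of the normalized ranked jumps of a Gamma subordinator. Dividing the decreasing order statistics by their sum cancels the common factor $c(T)$, and by Proposition 5 in \cite{PY97} (see also \cite{K75}) the resulting ranked sequence is Poisson--Dirichlet distributed with parameter equal to the time horizon, here $\theta=2$. This yields that $\left(\zeta_{(k)}^{(T)}/Z_0^{(T)},\,k\in\N\right)$ is $\mathrm{PD}(2)$-distributed, as claimed. I do not anticipate any substantive obstacle: the only step needing a line of care is the degeneration of the stable subordinator at $b=2$ (which is precisely what specializes the Poisson--Kingman structure to the Gamma/Poisson--Dirichlet case), and after that the proof is a verbatim repetition of the sub-critical argument with $\kappa$ replaced by $c(T)$ and Proposition \ref{PRMage} replaced by Proposition \ref{PRMageT}.
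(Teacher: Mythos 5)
Your argument is correct and is essentially the paper's own proof: the paper derives Corollary \ref{propPDT} from Proposition \ref{PRMageT} by repeating the proof of Corollary \ref{propPD}, i.e.\ noting that for $b=2$ the $(b-1)$-stable subordinator is deterministic so the atoms $\{c(T)\zeta_k^{(T)}\}$ are the jumps of a Gamma subordinator on $[0,2]$, and then invoking Proposition 5 of \cite{PY97}. Your explicit computation $g(x)=2x^{-1}\expp{-x}$ is just a slightly more detailed rendering of the same step.
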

\medskip

One can also consider the critical CBI associated as the limit of the
sub-critical CBI when $\alpha$ in \reff{eq:def-psi} goes down to
$0$. For the stable case,  consider the  birth rates $q^\mathrm{b}_{
  n,m}(-t)$ defined in Remark
\ref{rem:birth-rate-b} for $\psi(\lz)=\alpha\lambda+\gamma\lambda^\mathrm{b}$ with $b\in
(1, 2]$,  $\gamma>0$ and $\alpha>0$. Letting $\alpha$ goes down to 0,
wet get 
$ \lim_{\alpha\rightarrow0}q^\mathrm{b}_{
  n,m}(-t)=q^{\mathrm{b},0}_{n,m}(-t)$  with:
\begin{equation}
   \label{eq:b-crit}
q^{\mathrm{b},0}_{n,m}(-t)=
\begin{cases}\frac{(m+1)|
    b(b-2)\cdots(b-m+n)|}{(m-n+1)!}\frac{1}{t}\quad &\text{ for $b\in
    (1, 2)$ and $m>n$},\\
\frac{n+2}{t}\quad &\text{ for $b=2$ and $m=n+1$},\\
0\quad &\text{ for $b=2$ and $ m>n+1$}.
\end{cases}
\end{equation}

Then, Theorem \ref{thm:gwiT} and \reff{rateX} implies the following proposition that shows that both constructions for the critical case coincide.

\begin{prop}
Assume $\psi(\lz)=\gamma\lambda^b$ with $b\in (1, 2]$. Then, the
ancestral process
$(M_{t}^{(T)}, \,  -T\leq t<0)$ is a Markov chain with birth rate
$q^{\mathrm{b},0}_{n,m}(-t)$ given by \reff{eq:b-crit}
for $m>n\geq 0$ and $t>0$. 
\end{prop}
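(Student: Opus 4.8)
The plan is to deduce everything from Theorem~\ref{thm:gwiT}, which identifies the time-changed process $(M^{(T)}_{-T\expp{-t}},\, t\ge 0)$ with the GWI process $X$ of Theorem~\ref{thm:gwi}, combined with the explicit transition rates \reff{rateX} of $X$. First I would settle the Markov property: since $X$ is Markov and the map $t\mapsto -T\expp{-t}$ is a deterministic, continuous, strictly increasing bijection from $[0,+\infty)$ onto $[-T,0)$, the process $(M^{(T)}_s,\, -T\le s<0)$ is a deterministic monotone time change of $X$ and is therefore itself an inhomogeneous Markov process. As $X_0=0$ and, by \reff{rateX}, $X$ only jumps upward, the same holds for $M^{(T)}$, which is a pure-birth process starting from $0$ at level $-T$.

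Next I would transfer the jump rates through the time change. Fix a level $-t$ with $t\in(0,T)$; it corresponds to the GWI-time $\theta=\log(T/t)$ since $-t=-T\expp{-\theta}$. Writing $s(\theta)=-T\expp{-\theta}$, one has $s'(\theta)=T\expp{-\theta}=-s(\theta)$, so at level $-t$ an increment $\varepsilon$ in the level corresponds to an increment $\varepsilon/t+o(\varepsilon)$ in the GWI-time. By \reff{rateX}, the rate at which $X$ jumps from $n$ to $m>n$ equals $(m+1)p_{m-n+1}$, where $p=(p_k)$ is the offspring law with generating function $g_B$ from \reff{eq:g_stable}. Transferring through the time change then gives
\[
q^{\mathrm{b},0}_{n,m}(-t)
=\lim_{\varepsilon\to0+}\frac{1}{\varepsilon}
\bP\left(M^{(T)}_{-t+\varepsilon}=m\bigm| M^{(T)}_{-t}=n\right)
=(m+1)\,p_{m-n+1}\,\frac1t.
\]

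It remains to compute $p_{m-n+1}$ explicitly and match \reff{eq:b-crit}. Expanding $(1-r)^b=\sum_{j\ge0}(-1)^j\binom{b}{j}r^j$ in \reff{eq:g_stable}, the terms of degree $0$ and $1$ cancel, so $p_0=p_1=0$ and, for $j\ge2$,
\[
p_j=\frac{(-1)^j}{b-1}\binom{b}{j}
=\frac{(-1)^j}{b-1}\,\frac{b(b-1)(b-2)\cdots(b-j+1)}{j!}.
\]
Substituting $j=m-n+1$ and cancelling the positive factor $b-1$ leaves $(m+1)(-1)^{m-n+1}b(b-2)(b-3)\cdots(b-m+n)/\bigl((m-n+1)!\,t\bigr)$; for $b\in(1,2)$ exactly $m-n-1$ of the remaining factors are negative, so $(-1)^{m-n+1}$ turns the product into its absolute value and one recovers the first line of \reff{eq:b-crit}. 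For $b=2$ one has $g_B(r)=r^2$, hence $p_2=1$ and $p_j=0$ otherwise, which yields the remaining two lines. This matches the limit as $\alpha\to0$ of the sub-critical birth rates of Remark~\ref{rem:birth-rate-b}, so the two constructions of the critical process coincide.

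I expect the only delicate point to be the sign bookkeeping in the final step, namely checking that $(-1)^{m-n+1}\binom{b}{m-n+1}/(b-1)\ge0$ and equals the absolute-value expression in \reff{eq:b-crit}; the differentiation of the deterministic time change and the rate transfer are routine once the identification of Theorem~\ref{thm:gwiT} is in hand.
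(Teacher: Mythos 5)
Your argument is correct and follows exactly the route the paper takes (the paper simply asserts that the proposition follows from Theorem \ref{thm:gwiT} together with the rates \reff{rateX}); you have filled in the details — the Markov property under the deterministic time change, the Jacobian factor $1/t$, and the identification $p_j=\frac{(-1)^j}{b-1}\binom{b}{j}$ with the sign check — all of which are accurate. Nothing further is needed.
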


\bibliographystyle{abbrv}
\bibliography{biblio}

\end{document}